\numberwithin{equation}{section}
\theoremstyle{plain}
\newtheorem{theorem}{Theorem}[section] 
\newtheorem{lemma}[theorem]{Lemma} 
\newtheorem{proposition}[theorem]{Proposition} 
\newtheorem{proposition-definition}[theorem]{Proposition-Definition} 
\theoremstyle{definition}
\newtheorem{definition}[theorem]{Definition} 
\theoremstyle{remark}
\newtheorem{remark}[theorem]{Remark} 
\newtheorem{observation}[theorem]{Observation} 
\newcommand{\Aff}{\mathbb{A}}
\newcommand{\EE}{\mathbb{E}}
\newcommand{\FF}{\mathbb{F}}
\newcommand{\PP}{\mathbb{P}}
\newcommand{\QQ}{\mathbb{Q}}
\newcommand{\RR}{\mathbb{R}}
\newcommand{\ZZ}{\mathbb{Z}}
\newcommand{\lcm}{\operatorname{lcm}}
\newcommand{\vol}{\operatorname{vol}}
\newcommand{\abs}[1]{\lvert #1 \rvert}
\newcommand{\card}[1]{\lvert #1 \rvert}
\newcommand{\norm}[1]{\lVert #1 \rVert}
\newcommand{\floor}[1]{\lfloor #1 \rfloor}
\newcommand{\eps}{\epsilon}
\newcommand{\rad}{\operatorname{rad}}
\newcommand{\Supp}{\operatorname{Supp}}
\newcommand{\map}{\operatorname}
\newcommand{\mscr}{\mathscr}
\newcommand{\mcal}{\mathcal}
\newcommand{\mf}{\mathfrak}
\newcommand{\defeq}{\colonequals}
\newcommand{\maps}{\colon}
\newcommand{\belongs}{\subseteq}
\newcommand{\set}[1]{\{#1\}}
\newcommand{\grad}{\nabla}
\title{Prime Hasse principles via Diophantine second moments}
\date{} 
\author{Victor Y. Wang}
\address{Fine Hall, 304 Washington Road, Princeton, NJ 08540, USA}
\address{Courant Institute, 251 Mercer Street, New York, NY 10012, USA}
\address{IST Austria, Am Campus 1, 3400 Klosterneuburg, Austria}
\email{vywang@alum.mit.edu}
\subjclass{Primary 11D85; Secondary 11D25, 11D45, 11N35, 11N36}
\keywords{Integral points, densities, Manin conjectures, representing primes, Selberg sieve}
\begin{document}

\begin{abstract}
We show that almost all primes $p\not\equiv \pm 4 \bmod{9}$ are sums of three cubes,
assuming a conjecture due to Hooley, Manin, et al.~on cubic fourfolds.
This conjecture is approachable under standard statistical hypotheses on geometric families of $L$-functions.

\end{abstract}

\maketitle

\setcounter{tocdepth}{3}






\section{Introduction}

Let $F_0(\bm{y})=F_0(y_1,y_2,y_3)\defeq y_1^3+y_2^3+y_3^3$
and $F_0(S)\defeq \set{F_0(\bm{y}): \bm{y}\in S}$.
For well-known local reasons, $F_0(\ZZ^3)\belongs \set{a\in \ZZ: a\not\equiv\pm4\bmod{9}}$.
The \emph{Hasse principle}
holds if the set
\begin{equation}
\label{EQN:define-Hasse-exceptional-set}
\mcal{E} \defeq \set{a\in \ZZ: a\not\equiv\pm4\bmod{9}} \setminus F_0(\ZZ^3)
\end{equation}
is empty.
The analog of $\mcal{E}$ for $5y_1^3+12y_2^3+9y_3^3$ contains a sparse sequence
\cite{ghosh2017integral}*{p.~691, footnote~3},\footnote{See \cite{lyczak2023cubic} for a general study of Brauer--Manin obstructions for ternary diagonal cubic forms.}
produced by means that do not apply to $F_0$ \cite{colliot2012groupe}*{p.~1304}.
We confine ourselves to a statistical analysis of $\mcal{E}$ relative to $\ZZ$ and the set of primes.
For ``critical'' equations such as $F_0(\bm{y})=a$, ``subcritical'' statistical frameworks (such as those of \cites{vaughan1980ternary,brudern1991ternary,hooley2016representation}) break down, and new features come into play \cites{ghosh2017integral,diaconu2019admissible}.

Here ``critical'' refers to the
well-known fact that
if, say,
$X\ge 1$ and $A\in [X^2, X^3]$, then
\begin{equation*}
\EE_{\abs{a}\le A}[\#\set{\bm{y}\in \ZZ^3: \norm{\bm{y}}\le X,\; F_0(\bm{y})=a}]
\ll \log(1 + X/A^{1/3}).
\end{equation*}
(Any solutions only ``barely'' exist!)
To produce integral solutions to $F_0(\bm{y})=a$ in general, one must take $X/A^{1/3}\to \infty$, not fixed.
In fact,
for any fixed $\lambda\ge 1$, the set
\begin{equation*}
\set{a\not\equiv\pm4\bmod{9}: a\notin F_0([-\abs{a}^{1/3}\lambda, \abs{a}^{1/3}\lambda]^3)}
\belongs \ZZ
\end{equation*}
has lower density $>0$ \cite{diaconu2019admissible}*{\S1}.
For further discussion, see \cite{heath1992density}*{pp.~622--623}.

The scarcity of solutions also forces us to pass from the \emph{sparse} setting of $F_0$ to a \emph{richer} setting in $6$ variables.
If $r_3(a)\defeq \#\set{\bm{y}\in \ZZ_{\geq 0}^3: F_0(\bm{y}) = a}$, then by Cauchy,
\begin{equation}
\label{INEQ:basic-Cauchy}
\card{F_0(\ZZ_{\geq 0}^3) \cap [0,A]}
\ge (\EE_{0\le a\le A}[r_3(a)])^2 / \EE_{0\le a\le A}[r_3(a)^2].
\end{equation}
Whereas $\EE_{0\le a\le A}[r_3(a)] \asymp 1$ by classical geometry of numbers, the mean square $\EE_{0\le a\le A}[r_3(a)^2]$ corresponds to a difficult point count in $3+3=6$ cubes, which we now introduce.

Write $\bm{y}=(y_1,y_2,y_3)$, $\bm{z}=(z_1,z_2,z_3)$,
and let $\bm{x}=(x_1,\dots,x_6)\defeq (y_1,y_2,y_3,z_1,z_2,z_3)$.
Let
\begin{equation}
\label{EQN:define-fiber-product-form-F-from-F_0}
F(\bm{x})=F(\bm{y},\bm{z})\defeq F_0(\bm{y})-F_0(\bm{z}).
\end{equation}
Let $\Upsilon$ denote the set of $3$-dimensional vector spaces $L\belongs \QQ^6$ over $\QQ$ such that $F\vert_L = 0$.
(The equation $\bm{y} = \bm{z}$ cuts out one such $L$.
All other $L\in \Upsilon$ can be generated from $\bm{y} = \bm{z}$ by suitable permutations and negations of variables.)
Call a tuple $\bm{x}\in \ZZ^6$ \emph{special} if
\begin{equation}
\label{COND:x-is-in-the-union-of-special-linear-subspaces}
\bm{x}\in \bigcup_{L\in \Upsilon} L.
\end{equation}
We will concentrate on smoothly weighted point counts away from the origin $\bm{0}$; there are, however, standard methods to pass to unrestricted, unweighted point counts.

Given integers $X,d,q\geq 1$, and a function $w\in C^\infty_c(\RR^6)$ supported away from $\bm{0}$, let
\begin{align}
N_w(X;d) &\defeq \sum_{\bm{x}\in \ZZ^6:\,
F(\bm{x})=0,\; d\mid F_0(\bm{y}),F_0(\bm{z})} w(\bm{x}/X),
\label{EQN:define-integral-point-count-N_w(X;d)} \\
\rho(q;d) &\defeq q^{-5} \cdot
\#\set{\bm{x}\in (\ZZ/q\ZZ)^6: q\mid F(\bm{x}) \text{ and }
\gcd(q,d) \mid F_0(\bm{y}), F_0(\bm{z})},
\label{EQN:define-mod-q-density-rho(q;d)} \\
\sigma_p(d) &\defeq \lim_{l\to\infty}{\rho(p^l;d)},
\quad \sigma_{\infty,w} \defeq
\lim_{\eps\to 0}{(2\eps)^{-1}\int_{\abs{F(\bm{x})}\leq\eps} w(\bm{x})}\, d\bm{x},
\label{EQN:define-local-densities-sigma_v} \\
E_w(X;d) &\defeq N_w(X;d)
- \mf{S}(d)\cdot \sigma_{\infty,w}\cdot X^3
- \sum_{\textnormal{special }\bm{x}\in \ZZ^6:\,
d\mid F_0(\bm{y}),F_0(\bm{z})} w(\bm{x}/X)
\label{EQN:define-HLH-error-E_w(X;d)},
\end{align}
where $\mf{S}(d) \defeq \prod_{\textnormal{$p$ prime}} \sigma_p(d)$ is the \emph{singular series} associated to
the equation $F(\bm{x})=0$ with the congruence condition $F_0(\bm{y})\equiv F_0(\bm{z})\equiv 0\bmod{d}$.
By directly generalizing \cite{hooley1986some}*{Conjecture~2 for $l=3$}, one conjectures
\begin{equation}
\label{EQN:soft-HLH-general-homogeneous-weight}
\lim_{X\to \infty} X^{-3} E_w(X;d) = 0.
\end{equation}
This is a Manin-type randomness-structure dichotomy conjecture on the $6$-variable cubic hypersurface $F=0$
(based on \emph{special subvarieties}, i.e.~thin sets of type~I),
compatible with the general framework of \cite{franke1989rational}, \cite{vaughan1995certain}*{Appendix}, \cite{peyre1995hauteurs}, et al.

By \eqref{INEQ:basic-Cauchy},
$F_0(\ZZ_{\geq 0}^3)$ has lower density $>0$,
assuming $E_w(X;1)\ll X^3$ holds for a suitable $w$.
There are conjectures on the density of $F_0(\ZZ_{\geq 0}^3)$ \cite{deshouillers2006density}, but we focus on $F_0(\ZZ^3)$.
Diaconu, building on \cite{ghosh2017integral}, showed that \eqref{EQN:define-Hasse-exceptional-set} has density $0$,
assuming a version of \eqref{EQN:soft-HLH-general-homogeneous-weight} for $d=1$ where $N_w(X;1)$ is replaced by a point count over a fairly skew region \cite{diaconu2019admissible}*{$R^\ast_N$ on p.~24}.
In fact, as we will see, \eqref{EQN:soft-HLH-general-homogeneous-weight} itself suffices.
We can say even more about \eqref{EQN:define-Hasse-exceptional-set}, assuming that there exists a triple $(\xi,\delta,k)\in \set{0,1}\times \RR_{>0}\times \ZZ_{\geq 1}$ for which the following holds:
\begin{equation}
\label{EQN:hard-HLH-clean-weight-level-d}
E_w(X;d) \ll \norm{w}_{k,\infty} B(w)^k X^{3-\delta},
\quad\textnormal{uniformly over $d\leq X^{\xi\delta}$ and clean $w\in C^\infty_c(\RR^6)$}.
\end{equation}

We define the quantities $\norm{w}_{k,\infty}$, $B(w)$ in \S\ref{SUBSEC:conventions}; they measure the complexity of $w$.
We call a function $f\maps\RR^s\to\RR$ \emph{clean} if it is supported away from the coordinate hyperplanes,
i.e.~
\begin{equation}
\label{EQN:condition-for-clean}
(\Supp{f}) \cap \set{\bm{u}\in \RR^s: u_1\cdots u_s = 0} = \emptyset.
\end{equation}

\begin{theorem}
\label{THM:main-result-on-integer-values}
Suppose that \eqref{EQN:soft-HLH-general-homogeneous-weight} for $d=1$ holds for all clean functions $w\in C^\infty_c(\RR^6)$.
Then \eqref{EQN:define-Hasse-exceptional-set} has density $0$ in $\ZZ$.
Now fix $(\delta, k)\in \RR_{>0}\times \ZZ_{\geq 1}$, and assume \eqref{EQN:hard-HLH-clean-weight-level-d} for $\xi=0$.
Then $\card{\mcal{E}\cap [-A,A]}\ll_\eps A/(\log{A})^{1-\eps}$ for all integers $A\geq 2$ (for all $\eps>0$).
\end{theorem}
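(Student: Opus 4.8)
Both assertions come from the second‑moment (variance) method applied to the number of representations $a=F_0(\bm y)$, with the hypothesis on $N_w(X;1)$ as the only arithmetic input; the soft form gives density $0$, and the power saving upgrades this to the quantitative bound. First I would reduce to a dyadic window: it suffices to bound, for each dyadic $Y\le A$, the number of $a\in[Y,2Y]$ with $a\not\equiv\pm4\bmod9$ and $a\notin F_0(\ZZ^3)$ by $O_\eps\bigl(Y/(\log Y)^{1-\eps}\bigr)$, and then sum over $Y$. Fix $Y$, set $X\asymp Y^{1/3}$, and pick a clean $v\in C^\infty_c(\RR^3)$ with $v\ge0$ supported away from $\bm 0$; since $F_0$ is a submersion on $\RR^3\setminus\{\bm 0\}$ one may arrange its push‑forward density $\rho_v(s)=\int_{F_0=s}v\,|\nabla F_0|^{-1}$ to be bounded below on the window. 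Put $r_v(a)\defeq\sum_{\bm y:\,F_0(\bm y)=a}v(\bm y/X)$; then $w\defeq v\otimes v$ is clean (\S\ref{SUBSEC:conventions}) and $\sum_a r_v(a)^2=N_w(X;1)$, while $F_0(\bm y)\not\equiv\pm4\bmod9$ always, so the residue condition on $a$ is automatic once $r_v(a)>0$. One has the first moment $\sum_a r_v(a)=\hat v(\bm 0)X^3+O(X^2)\asymp Y$, and by \eqref{EQN:define-HLH-error-E_w(X;d)}
\[
\sum_a r_v(a)^2=\mf S(1)\,\sigma_{\infty,w}X^3+\sum_{\textnormal{special }\bm x}w(\bm x/X)+E_w(X;1),
\]
with the special term $\asymp X^3$ (a sum over $\asymp X^3$ lattice points on the diagonal $\bm y=\bm z$ and its images under permuting/negating coordinates) and $E_w(X;1)=o(X^3)$, resp.\ $\ll_{v,k}X^{3-\delta}$, by the two forms of the hypothesis. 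Cauchy--Schwarz already shows a positive proportion of the window is represented; the loss is precisely the genuine fluctuation of $r_v(a)$ about its mean, which is of size $c\,\mf S(a)$ with $c$ fixed and $\mf S(a)$ the singular series of $F_0(\bm y)=a$.

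To get further I would subtract this predicted count. With $S_v(t)=\sum_{\bm y}v(\bm y/X)e(tF_0(\bm y))$ and $R(a)=\int_{\mathfrak M}S_v(t)e(-ta)\,dt$ the major‑arc part (denominators $\le Q=X^{\delta'}$, $\delta'$ small), an unconditional major‑arc computation gives $R(a)=\rho_v(a/X^3)\,\mf S_Q(a)+O(X^{-\kappa})$ with $\mf S_Q$ the truncated singular series, and the remainder $\sum_a|r_v(a)-R(a)|^2=N_w(X;1)-\int_{\mathfrak M}|S_v|^2$ is of the same shape as the special term plus $E_w(X;1)$; accounting for the special subspaces in the prediction — which is where the careful choice of clean weight enters — makes this remainder $o(X^3)$, resp.\ $\ll X^{3-\kappa'}$. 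Because the singular series of a critical cubic equation converges only conditionally, $|\mf S(a)-\mf S_Q(a)|\ll(\log Q)^{-1+\eps}=(\log X)^{-1+\eps}$ off a thin set, so for $a$ with $\mf S(a)\gg(\log Y)^{-1+\eps}$ one has $R(a)\gg(\log Y)^{-1+\eps}$, and Chebyshev gives
\[
\#\{a:\ r_v(a)=0,\ \mf S(a)\gg(\log Y)^{-1+\eps}\}\ \le\ \sum_a\frac{|r_v(a)-R(a)|^2}{R(a)^2}\ \ll\ (\log Y)^{2}\,X^{3-\kappa'}\ \ll\ X^{3-\kappa''}.
\]
It remains to bound $\#\{a\in[Y,2Y]:\ 0<\mf S(a)\ll(\log Y)^{-1+\eps}\}$ by $O_\eps\bigl(Y/(\log Y)^{1-\eps}\bigr)$; this is a Mertens/sieve‑type count, the atypically small values of $\mf S$ coming mainly from $a$ divisible by a large power of a fixed small prime and, in a thinner tail, from $a$ lying in many cubic non‑residue classes. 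Summing over $Y$ yields $\#(\mcal E\cap[-A,A])\ll_\eps A/(\log A)^{1-\eps}$; for density $0$ one runs the same argument with the weaker input \eqref{EQN:soft-HLH-general-homogeneous-weight}, letting $Q\to\infty$ and the threshold tend to $0$, so that no quantitative control of the singular series is needed.

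\textbf{Main obstacle.} The heart of the matter is the middle step. Since only the level $d=1$ is available, the ``decorrelation'' normally effected by weighting the representation count with $1/\mf S(a)$ cannot be imposed arithmetically and must instead be engineered through the choice of clean weight $w$; in the naive treatment the quantity $\sum_a|r_v(a)-R(a)|^2$ is dominated by the special subspaces (the permutation‑orbit structure of solutions), and one must arrange that this contribution is exactly reproduced by the major‑arc term $\int_{\mathfrak M}|S_v|^2$, so that it cancels. Tracking the slow (conditional) convergence of the threefold singular series to obtain the factor $(\log Y)^{-1+\eps}$, and matching it against a sharp count of $a$ with small $\mf S(a)$, is what pins the exponent at $1-\eps$ rather than anything larger.
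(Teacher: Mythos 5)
There is a genuine gap at the heart of the proposal: you write that the special-subspaces contribution to $\sum_a|r_v(a)-R(a)|^2$ must be ``exactly reproduced by the major-arc term $\int_{\mathfrak M}|S_v|^2$, so that it cancels,'' and that the careful choice of clean weight makes the remainder $o(X^3)$. This cannot happen. The special solutions (on $\bm y=\bm z$ and its images under permuting/negating coordinates) contribute $\asymp X^3\norm{\nu}_{L^2(\RR^3)}^2$ to $N_{\nu^{\otimes 2}}(X;1)$, which for a fixed clean weight $\nu$ is the same order as the singular series main term $\mf S(1)\sigma_{\infty,\nu^{\otimes 2}}X^3$. The conjectural asymptotic \eqref{EQN:define-HLH-error-E_w(X;d)} \emph{already} has the special term on the same side as the main term; it is not an error to be absorbed, and it is not cancelled by major arcs. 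With a fixed weight, after the Chebyshev step you get a positive proportion of represented integers (the classical second-moment result the paper already cites) but not density $0$.

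The missing idea that the paper uses is the \emph{deforming cuspidal weight} $\nu^\star$ of \eqref{EQN:define-key-weight-nu^star}, concentrated in a dyadic family of scales $r\in[1,R]$ near the asymptotic lines $y_i+y_j=0$ of the cubic surface. For this weight, $\norm{\nu^\star}_{L^2(\RR^3)}^2\ll\log R$ (so the special-subspaces piece is only $\ll X^3\log R$), while the real density $\sigma_{\infty,a,\nu^\star}(X)\gg\log R$ (Lemma~\ref{LEM:log-growth-of-real-densities-for-constructed-nu^star}). Chebyshev then divides the variance by the \emph{square} of the density, giving $\ll X^3/\log R$, so letting $R\to\infty$ (with $X$) forces density $0$; the power-saving hypothesis lets one take $R$ as a small power of $X$, giving the quantitative $A/(\log A)^{1-\eps}$. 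Without making the signal $\sigma_{\infty,a,\nu}$ grow faster than the square root of the diagonal, the argument does not close.

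Two smaller issues: (i) you subtract the full singular series $\mf S(a)$ and then invoke a ``Mertens/sieve-type count'' of $a$ with $\mf S(a)$ small. The paper instead works with a concretely truncated additive approximation $s_a(K)=\sum_{n\le K}n^{-3}T_a(n)$ and proves (Theorem~\ref{THM:s_a(K)-is-typically-sizable}) that $|s_a(K)|$ is rarely small via high moments of $s_a(K)M_a(K)$ and $M_a(K)$; the delicate part is the contribution of cube-full moduli (Lemma~\ref{LEM:T_a-support-bound-for-nearly-square-free-a}, Lemma~\ref{LEM:restricted-2jth-moment-bound-for-general-convolution}), which the sketch glosses over. (ii) You do not need the major-arc machinery $S_v$, $\mathfrak M$, $R(a)$ at all; the paper's open-square computation of $\mathrm{Var}(X,K;1)$ (Theorem~\ref{THM:unconditional-variance-evalulation-over-integers}) via Poisson summation and the local lemmas of \S\ref{SEC:general-variance-setup-and-estimation} replaces it cleanly and keeps the uniformity in $\nu$ that the $R\to\infty$ limit requires.
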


The qualitative density result in Theorem~\ref{THM:main-result-on-integer-values} was proven in \cite{wang2022thesis}*{Theorem~2.1.8} using the strategy of \cite{diaconu2019admissible}*{\S\S2--3}.
But our present approach is stronger and more flexible.

\begin{theorem}
\label{THM:main-result-on-prime-values}
Fix $(\delta, k)\in \RR_{>0}\times \ZZ_{\geq 1}$.
Assume \eqref{EQN:hard-HLH-clean-weight-level-d} for $\xi=1$.
Then \eqref{EQN:define-Hasse-exceptional-set} contains at most $O_\eps(A/(\log{A})^{2-\eps})$ primes $p\leq A$, for any integer $A\geq 2$ (for any $\eps>0$).
\end{theorem}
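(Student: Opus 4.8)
The plan is to argue dyadically. For each $X\geq 2$ set
$$T_X\defeq\#\{p\in(X^3,2X^3]:p\text{ prime},\ p\not\equiv\pm4\ (9),\ p\notin F_0(\ZZ^3)\},$$
prove $T_X\ll_\eps X^3(\log X)^{-(2-\eps)}$, and sum over $X$ a power of $2$ with $X^3\leq 2A$; since $\sum_{2^j\leq 2A}2^j(\log 2^j)^{-(2-\eps)}$ is dominated by its largest term, this gives the stated bound. Fix a nonnegative clean weight $w=w_1\otimes w_1$ with $w_1\in C^\infty_c(\RR^3)$ chosen so that, writing $r(a)\defeq\sum_{\bm{y}}w_1(\bm{y}/X)\bm{1}_{F_0(\bm{y})=a}$, one has $N_w(X;d)=\sum_{d\mid a}r(a)^2$ (using $F(\bm{y},\bm{z})=0\Rightarrow F_0(\bm{y})=F_0(\bm{z})$, so $d\mid F_0(\bm{y})\Leftrightarrow d\mid F_0(\bm{z})$), and so that the real density $\sigma_\infty(a;w_1)$ entering the Hardy--Littlewood prediction $\rho(a)\defeq\sigma_\infty(a;w_1)\prod_{p\leq z_1}\sigma_p(a)$ is $\gg1$ for $a/X^3$ in a fixed open interval $I\subset\RR_{>0}$ (there is no obstruction because $F_0$ has no critical point on $\{F_0=c\}$ for $c\neq0$). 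Here $\sigma_p(a)$ is the $p$-adic density of $\{F_0=a\}$, $z_1$ is a truncation level to be chosen as a small fixed multiple of $\log X$, and from now on $a$ always runs over integers with $a/X^3\in I$.

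The core is the elementary bound $\bm{1}_{r(a)=0}\leq(1-r(a)/\rho(a))^2$, valid since $\rho(a)>0$, combined with a Selberg upper-bound sieve $\bm{1}_{a\text{ prime}}\leq\bm{1}_{(a,\,P(z))=1}\leq(\sum_{e\mid a,\,e\leq D}\lambda_e)^2$ of level $D=X^{\delta/2-\eps'}$, where $P(z)=\prod_{p\leq z}p$ and $z\asymp D$; then every $[e_1,e_2]\leq D^2<X^\delta$, so \eqref{EQN:hard-HLH-clean-weight-level-d} with $\xi=1$ applies with room to spare, and $T_X\leq\sum_{e_1,e_2\leq D}\lambda_{e_1}\lambda_{e_2}\sum_{[e_1,e_2]\mid a}\bm{1}_{a\not\equiv\pm4\,(9)}(1-r(a)/\rho(a))^2$. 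Expanding the square splits the inner sum over $a$ into a \emph{pure} sum $\sum_{e\mid a}1$ (trivial), a \emph{first moment} $\sum_{e\mid a}r(a)/\rho(a)=\sum_{\bm{y}:\,e\mid F_0(\bm{y})}w_1(\bm{y}/X)/\rho(F_0(\bm{y}))$, and a \emph{second moment} $\sum_{e\mid a}r(a)^2/\rho(a)^2$. In each, the weight $1/\rho(F_0(\bm{y}))$ (or its square) is a smooth function of $\bm{y}/X$ bounded below on $I$, times a function of $\bm{y}$ modulo $\asymp P(z_1)^{O(1)}$; grouping $\bm{y}$ into residue classes to this modulus and restricting $e$ coprime to it, the first moment reduces to counting $\bm{y}\in\ZZ^3$ in congruence classes, evaluated by standard equidistribution (Poisson summation / Weyl differencing) with a power-saving error as long as $P(z_1)^{O(1)}D^2\ll X^\delta$ — which forces $z_1$ down to a small fixed multiple of $\log X$. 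The second moment reduces, after the same decomposition, to $N_w(X;\cdot)$ in residue classes together with the ``special'' contribution of \eqref{COND:x-is-in-the-union-of-special-linear-subspaces}; here one invokes \eqref{EQN:hard-HLH-clean-weight-level-d} (in the mild form covering arbitrary residues $F_0(\bm{y})\equiv\alpha\ (d)$ for $d\leq X^\delta$, which any proof of the underlying conjecture supplies) and the bound $\sum_{e_1,e_2\leq D}|\lambda_{e_1}\lambda_{e_2}|\,|E_w(X;[e_1,e_2])|\ll X^{3-\delta}\sum_{e\leq D^2}3^{\omega(e)}\ll X^{3-2\eps'+o(1)}$, a genuine power saving precisely because $D^2$ was taken strictly below $X^\delta$.

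Assembling the three pieces, the terms of size $\asymp X^3$ must cancel. This cancellation is the Hardy--Littlewood--Hooley self-consistency that is the substance of \eqref{EQN:soft-HLH-general-homogeneous-weight}--\eqref{EQN:hard-HLH-clean-weight-level-d}: the conjectural count of \emph{non-special} zeros of the cubic fourfold $F=0$ must match $\sum_a\rho(a)^2$ minus the (explicit) total contribution of the subspaces $\Upsilon$, up to $o(X^3)$, so that $\sum_a(\rho(a)-r(a))^2$ has no main term and is controlled by the power-saving Diophantine error together with the $z_1$-truncation discrepancy $\ll X^3/z_1$. Verifying this matching — reconciling the Manin/Hooley prediction for $F$, with the subspaces $\Upsilon$ removed, against the square of the naive circle-method prediction for $y_1^3+y_2^3+y_3^3$ summed over values, and doing so at every level $d$ simultaneously with the right error terms — is, together with arranging the $1/\rho$-weights to remain compatible with the sieve's level of distribution, what I expect to be the crux; once in hand, it is the engine that makes the whole estimate collapse, exactly as it drives Theorem~\ref{THM:main-result-on-integer-values}.

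What survives after optimizing the Selberg weights against the density $\asymp1/d$ of the divisibility condition is a main term of order $X^3/(z_1\log X)$, plus the power-saving error, plus the contribution of the exceptional set $\{a:\rho(a)<\epsilon\}$, on which one uses $\bm{1}_{r(a)=0}\leq1$ crudely. For the exceptional set one needs the lower tail of the singular series over primes to be thin: writing $\log\mf{S}_{F_0}(a)=\sum_p\log\sigma_p(a)$, with $\sigma_p(a)=1$ for $p\equiv2\ (3)$ and $\log\sigma_p(a)=O(1/p)$ uniformly otherwise — the surfaces $\{F_0=a\}$ over $\FF_p$ are smooth, so $\#\{\bm{y}\in\FF_p^3:F_0(\bm{y})=a\}=p^2+O(p)$ — and noting that by the Chinese remainder theorem and equidistribution of Frobenius traces these summands behave like independent mean-$O(p^{-2})$ quantities of summable variance, a Bernstein-type estimate gives $\#\{p\leq A:\mf{S}_{F_0}(p)<\epsilon\}\ll_N\epsilon^N A/\log A$ for every $N$. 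Taking $\epsilon=(\log X)^{-1/(N+2)}$ balances the exceptional term $\epsilon^N X^3/\log X$ against $\epsilon^{-2}X^3/(z_1\log X)$ at $X^3(\log X)^{-(2-2/(N+2))}$; choosing $N=N(\eps)$ large (and $z_1$ a fixed multiple of $\log X$) yields $T_X\ll_\eps X^3(\log X)^{-(2-\eps)}$, and summing dyadically completes the proof.
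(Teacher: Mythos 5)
Your approach is genuinely different from the paper's, and it has at least two concrete gaps.

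First, the conjectural input you need is stronger than the hypothesis of the theorem. The paper works with the additive truncation $s_a(K)=\sum_{n\le K}n^{-3}T_a(n)$ and the unweighted approximate variance \eqref{EQN:define-approximate-variance}; after expansion, the only term requiring the HLH hypothesis is $\Sigma_1=\sum_{a\in d\ZZ}N_{a,\nu}(X)^2=N_{\nu^{\otimes 2}}(X;d)$, which is exactly the quantity in \eqref{EQN:hard-HLH-clean-weight-level-d}, while the remaining terms $\Sigma_2,\Sigma_3$ break into sums over residue classes $a\equiv b\bmod{n_1n_2d}$, $\bm z\equiv\bm e\bmod{nd}$ and are handled unconditionally by Poisson summation (Proposition~\ref{PROP:replace-dense-sum-with-integral}). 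You instead divide by a multiplicatively truncated density $\rho(a)$, so after expanding $1/\rho(a)^2$ into classes modulo $\asymp P(z_1)^{O(1)}$ your second-moment sums have the shape $\sum_{a\equiv\alpha\bmod{q},\,e\mid a}r(a)^2$, i.e., counts on $F=0$ with $F_0(\bm y)\equiv\alpha\bmod{qe}$ for \emph{general} $\alpha$. You flag this yourself, invoking ``the mild form covering arbitrary residues $F_0(\bm y)\equiv\alpha\ (d)$.'' But Theorem~\ref{THM:main-result-on-prime-values} is a conditional implication whose hypothesis is precisely \eqref{EQN:hard-HLH-clean-weight-level-d} --- the case $d\mid F_0(\bm y),F_0(\bm z)$ --- and nothing more; substituting a stronger hypothesis proves a different theorem. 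The additive truncation (flagged in the lead-in to \eqref{EQN:define-approximate-variance}: ``we use an additive approximation \ldots rather than a multiplicative approximation'') was chosen exactly so that the conjectural input matches the stated hypothesis.

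Second, the lower-tail estimate $\#\{p\le A:\mf S_{F_0}(p)<\epsilon\}\ll_N\epsilon^N A/\log A$ for every $N$ is asserted via a heuristic independence/Bernstein argument but not proved, and this is the technical heart of the problem: the paper devotes all of \S\ref{SEC:statistics-of-s_a(K)} to the analogous question, where Theorem~\ref{THM:s_a(K)-is-typically-sizable} bounds $\card{\mscr E(K;\eta)\cap[-A,A]}$ unconditionally via the mollifier $M_a(K)$, restricted high-moment estimates (Lemma~\ref{LEM:restricted-2jth-moment-bound-for-general-convolution}), and careful control of $a$ with large square-full/cube-full part. That work is not discharged by a one-line appeal to Bernstein. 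Relatedly, the paper's $\log$-gain comes from the $R$-deformed weight $\nu^\star$ of \eqref{EQN:define-key-weight-nu^star}, for which $\sigma_{\infty,a,\nu^\star}(X)\gg\log R$ (Lemma~\ref{LEM:log-growth-of-real-densities-for-constructed-nu^star}) while $\norm{\nu^\star}_{L^2}^2\ll\log R$; you instead fix $w_1$ and assert a $z_1$-truncation discrepancy $\ll X^3/z_1$, but since $\sigma_p(a)-1=O(1/p)$ only with sign changes, that estimate is itself nontrivial and is not supplied.
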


Before proceeding, some notes on \eqref{EQN:soft-HLH-general-homogeneous-weight}, \eqref{EQN:hard-HLH-clean-weight-level-d} are in order:
\begin{enumerate}
\item Unconditionally, $E_w(X;1)\ll_{w,\eps} X^{7/2} (\log{X})^{\eps-5/2}$ for $X\geq 2$ \cite{vaughan2020some}.
Improving on this is difficult; classically, \cite{hua1938waring} proved $E_w(X;1)\ll_{w,\eps} X^{7/2+\eps}$.

\item Conditionally, \cites{hooley1986HasseWeil,hooley_greaves_harman_huxley_1997,heath1998circle} proved the near-optimal bound $E_w(X;1)\ll_{w,\eps} X^{3+\eps}$ assuming GRH for geometric $L$-functions.

\item Building thereon, \cite{wang2023_HLH_vs_RMT} proved
\eqref{EQN:soft-HLH-general-homogeneous-weight} for $d=1$
(and in fact $E_w(X;1)\ll_w X^{3-\delta}$ for some $\delta$)
for all clean functions $w\in C^\infty_c(\RR^6)$,
assuming mainly some statistical hypotheses of Random Matrix Theory type on geometric families of $L$-functions.

\item The world of $L$-functions is very rich.
Over function fields, for instance, GRH is known and \cites{bergstrom2023hyperelliptic,MPPRW,ratio2024forthcoming} give a promising route towards \cite{wang2023_HLH_vs_RMT}'s hypotheses.
See \cite{BGW2024forthcoming} for further details.

\item The hypothesis \eqref{EQN:hard-HLH-clean-weight-level-d} is essentially a version of \eqref{EQN:soft-HLH-general-homogeneous-weight} with a specified power saving, uniform over a specified range of integers $d$ and weights $w$.
The conditional approach of \cite{wang2023_HLH_vs_RMT} should in principle lead to \eqref{EQN:hard-HLH-clean-weight-level-d} for some $(\delta, k)\in \RR_{>0}\times \ZZ_{\geq 1}$.

\item If $E_w(X;1)\ll_w X^\theta$ for all $w$, where $\theta\ge 3$, then
$E_w(X;d)\ll d^{O(1)} \norm{w}_{0,\infty} (B(w) X)^\theta$.
The uniformity in \eqref{EQN:hard-HLH-clean-weight-level-d} is thus natural,
but it requires extra work if $\theta<3$.
\end{enumerate}

How are Theorems~\ref{THM:main-result-on-integer-values}--\ref{THM:main-result-on-prime-values} proven?
Given a compactly supported function $\nu\maps \RR^3\to \RR$, let
\begin{equation}
\label{EQN:define-integral-point-count-N_a,nu(X)}
N_{a,\nu}(X) \defeq \sum_{\bm{y}\in \ZZ^3:\, F_0(\bm{y})=a} \nu(\bm{y}/X).
\end{equation}
To prove Theorem~\ref{THM:main-result-on-integer-values},
we will choose weights $\nu$ (smooth as in \cite{hooley2016representation}, yet tentacled as in \cites{ghosh2017integral,diaconu2019admissible}),
and bound $N_{a,\nu}(X)$ in approximate variance over $a\ll X^3$.
To prove Theorem~\ref{THM:main-result-on-prime-values}, we will also need precise estimates (not just bounds) for such variances, and not just over $\ZZ$ but also over arithmetic progressions $a\equiv 0\bmod{d}$ with $d\leq X^\delta$.

In \S\ref{SEC:general-variance-setup-and-estimation}, we will define and estimate an approximate variance.
In \S\ref{SEC:statistics-of-s_a(K)}, we will show that certain truncated singular series are typically sizable.
In \S\ref{SEC:apply-variance-estimates}, we will apply our estimates from \S\S\ref{SEC:general-variance-setup-and-estimation}--\ref{SEC:statistics-of-s_a(K)} to prove Theorems~\ref{THM:main-result-on-integer-values} and~\ref{THM:main-result-on-prime-values}.
Here it is important to allow $\nu$ to deform with $X$.
For fixed $\nu$, counting prime values of $F_0$ (with or without multiplicity) up to constant factors remains a challenge, even conditionally; see \cite{destagnol2019rational}*{Conjecture~A.3} (Bateman--Horn in several variables) and the present \S\ref{SEC:nonnegative-cubes} for some concrete open questions in this direction.
(It might also be interesting to ask analogous questions for sequences other than the primes.)

How many solutions to $x^3+y^3+z^3=a$ of a given size can our methods produce, for typical $a$?
By slightly modifying \S\ref{SEC:apply-variance-estimates},
one could provide several kinds of answers to this question.
A comprehensive discussion would be tedious, so we limit ourselves to a murky remark:

\begin{remark}
In Theorems~\ref{THM:main-result-on-integer-values} and~\ref{THM:main-result-on-prime-values}, there are three levels of hypotheses in total, say H1, H2, H3, from weakest to strongest.
H1 implies that if $h(a)\to \infty$ as $\abs{a}\to \infty$, then there exists $f\maps \ZZ \to \RR$, with $\lim_{\abs{a}\to\infty} f(a) = \infty$, such that for almost all integers $a\not\equiv\pm4\bmod{9}$, the equation $x^3+y^3+z^3=a$ has $\geq f(a)$ solutions with $\max(\abs{x},\abs{y},\abs{z})\le h(a)\cdot \abs{a}^{1/3}$.\footnote{One may simultaneously satisfy the condition $\max(\abs{x},\abs{y},\abs{z}) \ge f(a) \min(\abs{x},\abs{y},\abs{z})$, as the referee has suggested.
This is a weak analog of the ``minicube'' problem discussed in \cite{brudern2010waring}.
Minicubes do sometimes, if rarely, occur in practice; see the lopsided solution of \cite{booker2021question} to $x^3+y^3+z^3 = 3$, for instance.}
Under H2 (resp.~H3), one can show that if $g(a)\to 0$ as $\abs{a}\to \infty$, then $x^3+y^3+z^3=a$ has $\geq g(a)\cdot \log(1+\abs{a})$ solutions for almost all integers (resp.~primes) $a\not\equiv\pm4\bmod{9}$.
\end{remark}

We expect that Theorems~\ref{THM:main-result-on-integer-values} and~\ref{THM:main-result-on-prime-values} can be generalized from $F_0$ to arbitrary ternary cubic forms $G_0$ with nonzero discriminant,
if one modifies \eqref{EQN:condition-for-clean} and \eqref{EQN:condition-for-very-clean}.
For \eqref{EQN:condition-for-clean}, see \cite{wang2022thesis}*{Definition~1.4.3}.
For \eqref{EQN:condition-for-very-clean}, one needs that for any $L$ in the (finite) set $\Upsilon$,
if $I(L)$ denotes the ideal of $\QQ[\bm{y},\bm{z}]$ defining $L$, then $I(L)\cap \QQ[\bm{y}]$ is a principal ideal of $\QQ[\bm{y}]$.

The proof of Theorem~\ref{THM:main-result-on-prime-values} might also adapt to the Markoff cubic $x^2+y^2+z^2-xyz$ to \emph{unconditionally} extend \cite{ghosh2017integral}*{Theorem~1.2(ii)} to primes,
provided one can handle certain quadratic subtleties uncovered in \cite{ghosh2017integral}.
For practical reasons, we focus on $x^3+y^3+z^3$.

\subsection{Conventions}
\label{SUBSEC:conventions}


We let $\ZZ_{\geq 0}\defeq \set{n\in \ZZ: n\geq 0}$, and define $\ZZ_{\geq 1}, \RR_{>0}, \ldots$ similarly.
For a finite nonempty set $S$, we let $\EE_{b\in S}[f(b)]\defeq \card{S}^{-1} \sum_{b\in S} f(b)$.
For an event $E$, we let $\bm{1}_E\defeq 1$ if $E$ holds, and $\bm{1}_E\defeq 0$ otherwise.
We let $e(t)\defeq e^{2\pi it}$,
and $e_r(t)\defeq e(t/r)$.

For a vector $\bm{u}\in \RR^s$, we let $\norm{\bm{u}}\defeq \max_{i}(\abs{u_i})$ and $d\bm{u}\defeq du_1\cdots du_s$.
We let $C^\infty_c(\RR^s)$ denote the set of smooth compactly supported functions $\RR^s\to \RR$.
Given a function $f\maps\RR^s\to\RR$, we let $\Supp{f}$ denote the closure of $\set{\bm{u}\in \RR^s: f(\bm{u})\neq 0}$ in $\RR^s$.
We let $\partial_{x}\defeq \partial/\partial x$.
We write $\int_X dx\,f(x)$ to mean $\int_X f(x)\,dx$;
we write $\int_{X\times Y} dx\,dy\,f(x,y)$ to mean $\int_X dx\,(\int_Y dy\,f(x,y))$.

Given $f=f(u_1,\dots,u_s)\in C^\infty_c(\RR^s)$ and $k\in \ZZ_{\geq 0}$, we define the Sobolev norm
\begin{equation}
\label{EQN:define-Sobolev-norm}
\norm{f}_{k,\infty}\defeq \max_{\substack{\alpha_1,\dots,\alpha_s\in \ZZ_{\geq 0}: \\ \alpha_1+\dots+\alpha_s\leq k}}\,
\max_{\RR^s}{\abs{\partial_{u_1}^{\alpha_1} \cdots \partial_{u_s}^{\alpha_s} f}}.
\end{equation}
Given a clean, compactly supported function $f\maps\RR^s\to\RR$,
we let $B(f)$ denote the smallest integer $B\geq 1$ such that $B^{-1}\leq \abs{u_1},\dots,\abs{u_s}\leq B$ for all $\bm{u}\in \Supp f$.

We write $f\ll_S g$, or $g\gg_S f$, to mean $\abs{f} \leq Cg$ for some $C = C(S)>0$.
We let $O_S(g)$ denote a quantity that is $\ll_S g$.
We write $f\asymp_S g$ if $f\ll_S g\ll_S f$.

We let $v_p(-)$ denote the usual $p$-adic valuation.
For integers $n\geq 1$, we let $\phi(n)$ denote the totient function, $\tau(n)$ the divisor function, $\mu(n)$ the M\"{o}bius function, $\omega(n)$ the number of distinct prime factors of $n$, and $\rad(n)$ the radical of $n$.

\section{General variance setup and estimation}
\label{SEC:general-variance-setup-and-estimation}

\newcommand{\emm}{n}
\newcommand{\enn}{m}

Let $F_a\defeq F_0 - a$ and $\nu\in C^\infty_c(\RR^3)$.
Suppose $\bm{0}\notin \Supp\nu$.
Let us define local densities corresponding to the point count \eqref{EQN:define-integral-point-count-N_a,nu(X)}.
For all $a\in \ZZ \setminus \set{0}$, let
\begin{equation}
\label{EQN:define-F_a-p-adic-density}
\sigma_{p,a}
\defeq \lim_{l\to\infty}
(p^{-2l}\cdot\#\set{\bm{y}\in (\ZZ/p^l\ZZ)^3: F_a(\bm{y})=0});
\end{equation}
this exists by \cite{chambert2010igusa}*{\S5.4} since $F_a=0$ is smooth in $\Aff^3_\QQ$.
Similarly, but for all $a\in\RR$, let
\begin{equation}
\label{EQN:define-F_a-real-density}
\sigma_{\infty,a,\nu}(X)
\defeq \lim_{\eps\to 0}{(2\eps)^{-1}
\int_{\bm{y}\in \RR^3:\, \abs{F_a(\bm{y})}\le \eps} d\bm{y}\, \nu(\bm{y}/X)};
\end{equation}
this exists because the surface $F_a=0$ in $\RR^3\setminus \set{\bm{0}}$ is smooth (even if $a=0$).
At least for cube-free $a\ne 0$, the density $\sigma_{p,a}$ and a real density are computed on \cite{heath1992density}*{p.~622}.

For $a\in\ZZ\setminus \set{0}$,
informally write $\mathfrak{S}_a
\defeq\prod_{\textnormal{$p$ prime}}\sigma_{p,a}$,
and consider the \emph{Hardy--Littlewood prediction} $\mathfrak{S}_a\cdot\sigma_{\infty,a,\nu}(X)$ for $N_{a,\nu}(X)$.
Smaller moduli should have a greater effect in $\mf{S}_a$;
furthermore,
$\mf{S}_a$ itself---\emph{as is}---can be subtle (see \S\ref{SEC:statistics-of-s_a(K)}).
So in \eqref{EQN:define-approximate-variance} below,
we use a ``restricted'' version of $\mathfrak{S}_a\cdot\sigma_{\infty,a,\nu}(X)$.
For technical quantitative reasons, we use a series approximation \eqref{EQN:define-s_a(K)} to $\mathfrak{S}_a$, rather than a product approximation as in \cites{ghosh2017integral, diaconu2019admissible, wang2022thesis}.

\begin{definition}
For integers $\enn\geq 1$, let
\begin{equation}
\label{EQN:define-T_a(m)}
T_a(\enn)
\defeq \sum_{u\in (\ZZ/\enn\ZZ)^\times} \sum_{\bm{y}\in (\ZZ/\enn\ZZ)^3} e_\enn(u F_a(\bm{y}))
= \sum_{\substack{1\le u\le \enn: \\ \gcd(u,\enn)=1}}\,
\sum_{1\le y_1,y_2,y_3\le \enn} e_\enn(u F_a(y_1,y_2,y_3))
\end{equation}
whenever $a$ lies in $\ZZ$ or $\ZZ/\enn\ZZ$ (or maps canonically into $\ZZ/\enn\ZZ$).
For $a\in \ZZ$ and $K\in \ZZ_{\ge 1}$, let
\begin{equation}
\label{EQN:define-s_a(K)}
s_a(K)\defeq \sum_{\enn\leq K} \enn^{-3} T_a(\enn).
\end{equation}
For integers $d\geq 1$, let $d\ZZ\defeq \set{a\in \ZZ: d\mid a}$ and define the \emph{$K$-approximate variance}
\begin{equation}
\label{EQN:define-approximate-variance}
\map{Var}(X,K;d)
\defeq \sum_{a\in d\ZZ}[N_{a,\nu}(X) - s_a(K)\sigma_{\infty,a,\nu}(X)]^2.
\end{equation}
\end{definition}

Both $\nu$ and $T_a(\enn)$ are real-valued, so \eqref{EQN:define-approximate-variance} is a reasonable definition.
For later use, let
\begin{align}
\nu^{\otimes 2}(\bm{x})=\nu^{\otimes 2}(\bm{y},\bm{z})&\defeq \nu(\bm{y})\nu(\bm{z}),
\label{EQN:define-symmetric-separable-w-weight-from-nu} \\
\norm{\nu}_{L^2(\RR^3)}^2 &\defeq \int_{\bm{y}\in \RR^3} d\bm{y}\,\nu(\bm{y})^2.
\label{EQN:define-L2-norm-of-nu}
\end{align}
For clean $\nu$, we will
rewrite $\map{Var}(X,K;d)$ after expanding the square
(Theorem~\ref{THM:unconditional-variance-evalulation-over-integers} below).
Due to the nonnegativity of squares in \eqref{EQN:define-approximate-variance}, the Selberg sieve can then be used to bound a variant of \eqref{EQN:define-approximate-variance} over primes, via a certain multiplicative structure over $d$,
under additional conditions on $\nu$; this will be Theorem~\ref{THM:conditional-variance-evalulation-over-primes}.
The proofs of Theorems~\ref{THM:unconditional-variance-evalulation-over-integers} and~\ref{THM:conditional-variance-evalulation-over-primes} rest on delicate local calculations, and require particular care at primes $p\mid d$.
On a first reading, one may wish to focus on the simplest case $d=1$, which contains most of the key ideas.

\subsection{Non-archimedean work}

For integers $\enn,d\geq 1$, let
\begin{equation}
S^+_{\bm{0}}(\enn;d)
\defeq \sum_{\emm\geq 1:\, \lcm(\emm,d) = \enn}\,
\sum_{u\in (\ZZ/\emm\ZZ)^\times} \sum_{\bm{x}\in (\ZZ/\enn\ZZ)^6:\,
d\mid F_0(\bm{y}), F_0(\bm{z})} e_\emm(uF(\bm{x})).
\label{EQN:define-S^+_0(m;d)}
\end{equation}
Then $S^+_{\bm{0}}(\enn;d) = 0$ unless $d\mid \enn$.
For every positive integer $q\in d\ZZ$, we have by \eqref{EQN:define-mod-q-density-rho(q;d)}
\begin{equation*}
\begin{split}
\rho(q;d)
&= \sum_{a\in \ZZ/q\ZZ} q^{-6} \sum_{\bm{x}\in (\ZZ/q\ZZ)^6:\,
d\mid F_0(\bm{y}), F_0(\bm{z})} e_q(aF(\bm{x})) \\
&= \sum_{\emm\mid q} \sum_{u\in (\ZZ/\emm\ZZ)^\times} q^{-6} \sum_{\bm{x}\in (\ZZ/q\ZZ)^6:\,
d\mid F_0(\bm{y}), F_0(\bm{z})} e_\emm(uF(\bm{x})) \\
&= \sum_{\emm\geq 1:\, \lcm(\emm,d) \mid q}\,
\sum_{u\in (\ZZ/\emm\ZZ)^\times} q^{-6} \sum_{\bm{x}\in (\ZZ/q\ZZ)^6:\,
d\mid F_0(\bm{y}), F_0(\bm{z})} e_\emm(uF(\bm{x})),
\end{split}
\end{equation*}
since $\emm\mid q \Leftrightarrow \lcm(\emm,d)\mid q$ when $q\in d\ZZ$.
Reducing $\bm{x}$ modulo $\lcm(\emm,d)$, we get
(for $q\in d\ZZ$)
\begin{equation}
\begin{split}
\label{EQN:key-density-relation-to-S^+_0(m;d)}
\rho(q;d)
&= \sum_{\emm\geq 1:\, \lcm(\emm,d) \mid q}\,
\sum_{u\in (\ZZ/\emm\ZZ)^\times} \lcm(\emm,d)^{-6} \sum_{\bm{x}\in (\ZZ/\lcm(\emm,d)\ZZ)^6:\,
d\mid F_0(\bm{y}), F_0(\bm{z})} e_\emm(uF(\bm{x})) \\
&= \sum_{\enn\mid q} \enn^{-6}S^+_{\bm{0}}(\enn;d);
\end{split}
\end{equation}
this is well known when $d=1$
\cite{vaughan1997hardy}*{Lemma~2.12},
but perhaps less so for $d>1$.

\begin{proposition}
\label{PROP:standard-multiplicativity-properties-of-T_a,S^+_0}
Let $\enn_1,\enn_2,d_1,d_2\geq 1$ be integers.
If $\gcd(\enn_1,\enn_2) = 1$, then $T_a(\enn_1\enn_2) = T_a(\enn_1)T_a(\enn_2)$.
If $\gcd(\enn_1d_1, \enn_2d_2) = 1$, then $S^+_{\bm{0}}(\enn_1\enn_2;d_1d_2) = S^+_{\bm{0}}(\enn_1;d_1) S^+_{\bm{0}}(\enn_2;d_2)$.
\end{proposition}

\begin{proof}

It is well known that $T_a(\enn)$ and $S^+_{\bm{0}}(\enn;1)$ are multiplicative in $\enn$;
see \cite{browning2021cubic}*{Lemma~2.13} for a general treatment.
A similar proof should show that $S^+_{\bm{0}}(\enn;d)$ is multiplicative in pairs $(\enn,d)$, but we instead use double Dirichlet series.
By \eqref{EQN:key-density-relation-to-S^+_0(m;d)},
\begin{equation*}
\sum_{q,d\ge 1:\, d\mid q} \rho(q;d) q^{-r} d^{-s}
= \sum_{d,\enn\ge 1} \enn^{-6}S^+_{\bm{0}}(\enn;d) d^{-s} \sum_{q\ge 1:\, \enn\mid q} q^{-r}
= \zeta(r) \sum_{d,\enn\ge 1} \enn^{-6-r}S^+_{\bm{0}}(\enn;d) d^{-s}.
\end{equation*}
The left-hand side has an Euler product (by \eqref{EQN:define-mod-q-density-rho(q;d)} and the Chinese remainder theorem).
Multiplying by $\zeta(r)^{-1} = \prod_p (1-p^{-r})$ gives the desired multiplicativity of $S^+_{\bm{0}}(\enn;d)$.
\end{proof}

\begin{lemma}
\label{LEM:convenient-generic-bound-on-S^+_0(m;d)}
Let $\enn,d\geq 1$ be integers.
Suppose $v_p(\enn) > v_p(d)$ for all primes $p\mid \enn$.
Then $\abs{S^+_{\bm{0}}(\enn;d)}\leq \abs{S^+_{\bm{0}}(\enn;1)}$.
\end{lemma}

\begin{proof}
Here $\set{\emm\ge 1: \lcm(\emm,d) = \enn} = \set{\enn}$.
So by \eqref{EQN:define-S^+_0(m;d)}, the quantity $d^2 \cdot S^+_{\bm{0}}(\enn;d)$ equals
\begin{equation*}
\begin{split}
&d^2 \sum_{u\in (\ZZ/\enn\ZZ)^\times} \sum_{\bm{y}_1,\bm{y}_2\in (\ZZ/\enn\ZZ)^3:\, d\mid F_0(\bm{y}_1), F_0(\bm{y}_2)} e_\enn(uF_0(\bm{y}_1) - uF_0(\bm{y}_2)) \\
&= \sum_{u\in (\ZZ/\enn\ZZ)^\times} \sum_{\bm{y}_1,\bm{y}_2\in (\ZZ/\enn\ZZ)^3} e_\enn(uF_0(\bm{y}_1) - uF_0(\bm{y}_2))
\sum_{v_1,v_2\in \ZZ/d\ZZ} e_d(v_1F_0(\bm{y}_1) + v_2F_0(\bm{y}_2)) \\
&= \sum_{v_1,v_2\in \ZZ/d\ZZ} \sum_{u\in (\ZZ/\enn\ZZ)^\times}
\prod_{1\leq i\leq 2} \,\biggl(\,\sum_{\bm{y}_i\in (\ZZ/\enn\ZZ)^3} e_\enn((-1)^{i-1}uF_0(\bm{y}_i)) e_d(v_iF_0(\bm{y}_i))\biggr).
\end{split}
\end{equation*}
But for any $\eps\in \set{-1,1}$ and $v\in \ZZ/d\ZZ$, we have
\begin{equation*}
\sum_{u\in (\ZZ/\enn\ZZ)^\times} \,\Bigl\lvert{
\sum_{\bm{y}\in (\ZZ/\enn\ZZ)^3} e_\enn(\eps uF_0(\bm{y})) e_d(vF_0(\bm{y}))}
\Bigr\rvert^2
= S^+_{\bm{0}}(\enn;1),
\end{equation*}
since the formula $u\mapsto \eps u + (\enn/d) v$ defines a bijection on $(\ZZ/\enn\ZZ)^\times$.
So Cauchy over $u\in (\ZZ/\enn\ZZ)^\times$
implies $d^2\cdot \abs{S^+_{\bm{0}}(\enn;d)}
\le \sum_{v_1,v_2\in \ZZ/d\ZZ} S^+_{\bm{0}}(\enn;1)
= d^2\cdot S^+_{\bm{0}}(\enn;1)$.
This suffices.
\end{proof}

\begin{lemma}
\label{LEM:sum-S^+_0(m)-trivially}
Let $N,d\geq 1$ be integers.
Then the following holds:
\begin{equation*}
\sum_{\enn\in [N, 2N)} \enn^{-6}\, \abs{S^+_{\bm{0}}(\enn;d)}
\le d \sum_{ab \in [N, 2N):\, a\mid d} b^{-6} \, \abs{S^+_{\bm{0}}(b;1)}
\ll_\eps d^{5/3} N^{-2/3+\eps}.
\end{equation*}
\end{lemma}

\begin{proof}
Write $\enn=ef$, where $p\mid e\Rightarrow v_p(e)\le v_p(d)$ and $p\mid f\Rightarrow v_p(f)>v_p(d)$.
Trivially, $\abs{S^+_{\bm{0}}(e;\gcd(d,e^\infty))} \le \sum_{\emm\mid e} \phi(\emm) e^6 = e^7$.
By Lemma~\ref{LEM:convenient-generic-bound-on-S^+_0(m;d)}, $\abs{S^+_{\bm{0}}(f;\gcd(d,f^\infty))} \le \abs{S^+_{\bm{0}}(f;1)}$.
Therefore, $\abs{S^+_{\bm{0}}(\enn;d)} \leq e^7\, \abs{S^+_{\bm{0}}(f;1)}$ by Proposition~\ref{PROP:standard-multiplicativity-properties-of-T_a,S^+_0}.
Since $e\mid d$, we get
\begin{equation*}
\sum_{\enn\in [N, 2N)} \enn^{-6} \, \abs{S^+_{\bm{0}}(\enn;d)}
\le \sum_{e\mid d} e \sum_{f\in [N/e, 2N/e)} f^{-6} \, \abs{S^+_{\bm{0}}(f;1)}
\le d \sum_{e\mid d} \sum_{f\in [N/e, 2N/e)} f^{-6} \, \abs{S^+_{\bm{0}}(f;1)}.
\end{equation*}
By \cite{vaughan1997hardy}*{$(t,k,\lambda)=(4,3,0)$ case of Lemma~4.9} and \cite{vaughan1997hardy}*{Theorem~4.2},
the inner sum over $f$ is $\ll_\eps (N/e)^{\eps-2/3}$.
The lemma then follows from the bound $\sum_{e\mid d} e^{2/3-\eps} \ll d^{2/3}$.
\end{proof}

Let us note some standard consequences of our work so far.
First, the density $\sigma_p(d)$ from \eqref{EQN:define-local-densities-sigma_v} exists by \eqref{EQN:key-density-relation-to-S^+_0(m;d)} and Lemma~\ref{LEM:sum-S^+_0(m)-trivially}.
Second, the singular series $\mf{S}(d)$ converges, with
\begin{equation}
\label{EQN:F-singular-series-d}
\mf{S}(d) = \prod_{\textnormal{$p$ prime}} \sigma_p(d)
= \sum_{\enn\geq 1} \enn^{-6}S^+_{\bm{0}}(\enn;d)
= \sum_{\enn\geq 1:\, d\mid \enn} \enn^{-6}S^+_{\bm{0}}(\enn;d);
\end{equation}
here we use \eqref{EQN:key-density-relation-to-S^+_0(m;d)}, Proposition~\ref{PROP:standard-multiplicativity-properties-of-T_a,S^+_0}, and Lemma~\ref{LEM:sum-S^+_0(m)-trivially} to see that the infinite product and sums in \eqref{EQN:F-singular-series-d} all converge absolutely, and equal one another.


We now prove several results relating $T_a(-)$, from \eqref{EQN:define-T_a(m)}, to $S^+_{\bm{0}}(-;d)$, from \eqref{EQN:define-S^+_0(m;d)}.

\begin{lemma}
\label{LEM:double-T_b-key-calculations}
Let $d,m\geq 1$ be integers with $d\mid m$.
Then
\begin{align}
\sum_{n_1,n_2\geq 1:\, \lcm(n_1,d) = m,\; \lcm(n_2,d) = m}
\frac{1}{m} \sum_{b\in d\ZZ/m\ZZ} \frac{T_b(n_1)T_b(n_2)}{(n_1n_2)^3}
&= \frac{S^+_{\bm{0}}(m;d)}{m^6},
\label{EQN:double-T_b-lemma-first-goal} \\
\sum_{n_1,n_2\geq 1:\, \lcm(n_1,d) = m,\; \lcm(n_2,d) = m}
\,\biggl\lvert{\frac{1}{m} \sum_{b\in d\ZZ/m\ZZ} \frac{T_b(n_1)T_b(n_2)}{(n_1n_2)^3}}\biggr\rvert
&\leq \tau(m) \sum_{rn=m:\, r\mid d} \frac{\abs{S^+_{\bm{0}}(n;1)}}{n^6}.
\label{INEQ:double-T_b-lemma-second-goal}
\end{align}
\end{lemma}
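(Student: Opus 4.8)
\textbf{Proof of \eqref{EQN:double-T_b-lemma-first-goal}.} The plan is to reduce both sides to weighted sums of products $g(m,c_1)g(m,c_2)$, where $g(n,u)\defeq \sum_{\bm{y}\in(\ZZ/n\ZZ)^3}e_n(uF_0(\bm{y}))$, and match the weights pointwise in $(c_1,c_2)\in(\ZZ/m\ZZ)^2$. I will use: $T_b(n)=\sum_{u\in(\ZZ/n\ZZ)^\times}g(n,u)\,e_n(-ub)$ (from \eqref{EQN:define-T_a(n)}); the lift relation $g(m,c)=(m/n)^3g(n,u)$ whenever $n\mid m$ and $c\equiv u(m/n)\bmod m$ with $\gcd(u,n)=1$ (so $n=m/\gcd(c,m)$; as $c$ runs over $\ZZ/m\ZZ$, the pair $(n,u)$ runs over all pairs with $n\mid m$, $u\in(\ZZ/n\ZZ)^\times$); and the elementary fact that for $c\in\ZZ/m\ZZ$ one has $\lcm(m/\gcd(c,m),d)=m$ iff $\gcd(c,m/d)=1$. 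First, for any $n_1,n_2\mid m$, expanding both $T_b(n_i)$ and summing over $b\in d\ZZ/m\ZZ$ via $\sum_{b\in d\ZZ/m\ZZ}e_m(-jb)=(m/d)\bm{1}_{(m/d)\mid j}$ (valid since $d\mid m$, $n_i\mid m$) gives
\[
\frac1m\sum_{b\in d\ZZ/m\ZZ}\frac{T_b(n_1)T_b(n_2)}{(n_1n_2)^3}=\frac1d\sum_{\substack{u_1\in(\ZZ/n_1\ZZ)^\times,\,u_2\in(\ZZ/n_2\ZZ)^\times\\(m/d)\,\mid\,(m/n_1)u_1+(m/n_2)u_2}}\frac{g(n_1,u_1)g(n_2,u_2)}{(n_1n_2)^3},
\]
which after substituting $c_i=u_i(m/n_i)$, $g(n_i,u_i)=(m/n_i)^{-3}g(m,c_i)$, and $(m/n_i)^3n_i^3=m^3$ becomes $(dm^6)^{-1}\sum g(m,c_1)g(m,c_2)$ over $(c_1,c_2)$ with $\gcd(c_i,m)=m/n_i$ and $(m/d)\mid(c_1+c_2)$. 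Summing over all $n_1,n_2$ with $\lcm(n_i,d)=m$ collapses the conditions on the $n_i$ to $\gcd(c_i,m/d)=1$, so the left side of \eqref{EQN:double-T_b-lemma-first-goal} equals $(dm^6)^{-1}\sum_{(c_1,c_2)\in\mathcal{C}}g(m,c_1)g(m,c_2)$ with $\mathcal{C}=\{(c_1,c_2):\gcd(c_i,m/d)=1,\ (m/d)\mid(c_1+c_2)\}$. On the other hand, detecting $d\mid F_0(\bm{y})$ and $d\mid F_0(\bm{z})$ in \eqref{EQN:define-S^+_0(n;d)} (with $n=m$) via $\bm{1}_{d\mid k}=d^{-1}\sum_{v\in\ZZ/d\ZZ}e_d(vk)$ and collapsing the resulting $\bm{y}$- and $\bm{z}$-sums to complete exponential sums yields
\[
m^{-6}S^+_{\bm{0}}(m;d)=\frac1{d^2m^6}\sum_{\substack{\ell\ge1:\,\lcm(\ell,d)=m}}\ \sum_{u\in(\ZZ/\ell\ZZ)^\times}\ \sum_{v_1,v_2\in\ZZ/d\ZZ}g(m,c_1')\,g(m,c_2'),
\]
with $c_1'=u(m/\ell)+v_1(m/d)$, $c_2'=-u(m/\ell)+v_2(m/d)$. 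So it suffices to prove: for each fixed $(c_1,c_2)$, the number of quadruples $(\ell,u,v_1,v_2)$ with $c_1'\equiv c_1$, $c_2'\equiv c_2\bmod m$ is $d$ if $(c_1,c_2)\in\mathcal{C}$ and $0$ otherwise. Adding the two congruences forces $(v_1+v_2)(m/d)\equiv c_1+c_2$, solvable iff $(m/d)\mid(c_1+c_2)$ and then leaving $v_1$ free ($d$ choices, $v_2$ determined); for each $v_1$, the congruence $u(m/\ell)\equiv c_1-v_1(m/d)\bmod m$ with $u\in(\ZZ/\ell\ZZ)^\times$ determines $\ell=m/\gcd(c_1-v_1(m/d),m)$ and then $u$ uniquely, this $u$ automatically satisfies the $c_2'$-congruence, and $\lcm(\ell,d)=m$ holds iff $\gcd(c_1-v_1(m/d),m/d)=\gcd(c_1,m/d)=1$; finally $\gcd(c_1,m/d)=1$ with $(m/d)\mid(c_1+c_2)$ forces $\gcd(c_2,m/d)=1$. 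This gives the count and hence \eqref{EQN:double-T_b-lemma-first-goal}. (Alternatively one can use \eqref{EQN:key-density-relation-to-S^+_0(n;d)} with $q=m$, write $\sum_ae_m(aF(\bm{x}))=m\bm{1}_{m\mid F(\bm{x})}$, condition on the common value of $F_0(\bm{y})\equiv F_0(\bm{z})$, and invoke $\sum_{n\mid q}n^{-3}T_a(n)=q^{-2}\#\{\bm{y}\bmod q:F_0(\bm{y})\equiv a\}$, after which one must check that the terms with $\lcm(n_1,d)\ne\lcm(n_2,d)$ vanish.)

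\textbf{Proof of \eqref{INEQ:double-T_b-lemma-second-goal}.} Since each $T_b(n)$ is real, $(n_1n_2)^{-3}\lvert T_b(n_1)T_b(n_2)\rvert\le\tfrac12(n_1^{-6}T_b(n_1)^2+n_2^{-6}T_b(n_2)^2)$, so by the triangle inequality each summand on the left of \eqref{INEQ:double-T_b-lemma-second-goal} is $\le\tfrac12(A(n_1)+A(n_2))$, where $A(n)\defeq(mn^6)^{-1}\sum_{b\in d\ZZ/m\ZZ}T_b(n)^2\ge0$. There are at most $\tau(m)$ integers $n$ with $\lcm(n,d)=m$ (all divide $m$), so summing over such pairs $(n_1,n_2)$ and symmetrizing bounds the left side of \eqref{INEQ:double-T_b-lemma-second-goal} by $\tau(m)\sum_{n:\lcm(n,d)=m}A(n)$. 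Now fix $n$ with $\lcm(n,d)=m$; then $m/d=n/\gcd(n,d)$ divides $n$ and is coprime to $m/n$, so the first displayed identity above with $n_1=n_2=n$ gives $A(n)=(dn^6)^{-1}\sum g(n,u_1)g(n,u_2)$ over $u_1,u_2\in(\ZZ/n\ZZ)^\times$ with $(m/d)\mid(u_1+u_2)$. Using $\lvert g(n,u_1)g(n,u_2)\rvert\le\tfrac12(|g(n,u_1)|^2+|g(n,u_2)|^2)$ and that each unit modulo $m/d$ has exactly $\phi(n)/\phi(m/d)\le\gcd(n,d)$ preimages in $(\ZZ/n\ZZ)^\times$, together with $S^+_{\bm{0}}(n;1)=\sum_{u\in(\ZZ/n\ZZ)^\times}|g(n,u)|^2$ (which follows from \eqref{EQN:define-S^+_0(n;d)} since $\sum_{\bm{x}\in(\ZZ/n\ZZ)^6}e_n(uF(\bm{x}))=|g(n,u)|^2$, and also shows $S^+_{\bm{0}}(n;1)\ge0$), we get $A(n)\le\gcd(n,d)(dn^6)^{-1}S^+_{\bm{0}}(n;1)=(m/n)^{-1}n^{-6}S^+_{\bm{0}}(n;1)$ (using $\gcd(n,d)=nd/m$). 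Since $m/n\mid d$ whenever $\lcm(n,d)=m$, summing over such $n$ yields $\sum_{n:\lcm(n,d)=m}A(n)\le\sum_{rn=m:\,r\mid d}n^{-6}S^+_{\bm{0}}(n;1)$, which gives \eqref{INEQ:double-T_b-lemma-second-goal}.

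\textbf{Expected main obstacle.} The only genuinely delicate point is the exact count of quadruples $(\ell,u,v_1,v_2)$ lying over a prescribed $(c_1,c_2)$ in the proof of \eqref{EQN:double-T_b-lemma-first-goal}; once that bookkeeping with reduced fractions is done, the remaining steps are routine rearrangement and elementary inequalities.
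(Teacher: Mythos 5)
Your proof is correct, and it takes a genuinely different route from the paper's. The paper first invokes multiplicativity (Proposition~\ref{PROP:standard-multiplicativity-properties-of-T_a,S^+_0}) to reduce to $(d,m)=(p^e,p^f)$, and then treats the cases $e<f$ and $e=f$ separately: in the first, only $n=m$ contributes, so one squares $T_b(m)$ directly and sums over $b$; in the second, $d\ZZ/m\ZZ=\{0\}$, and one identifies $\sum_{n\mid m}n^{-3}T_0(n)$ with a normalized point count and then squares. The paper's version of \eqref{INEQ:double-T_b-lemma-second-goal} also imports $\lvert S^+_{\bm{0}}(m;d)\rvert\le\lvert S^+_{\bm{0}}(m;1)\rvert$ from Lemma~\ref{LEM:convenient-generic-bound-on-S^+_0(n;d)}. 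You instead work directly with general $(d,m)$: for \eqref{EQN:double-T_b-lemma-first-goal} you lift both $T_b(n_i)$ to complete sums $g(m,c_i)$ via the standard Farey bijection $(n,u)\leftrightarrow c=u(m/n)$ between $\{(n,u):n\mid m,\ u\in(\ZZ/n\ZZ)^\times\}$ and $\ZZ/m\ZZ$, and you expand $S^+_{\bm{0}}(m;d)$ by detecting the $d\mid F_0$ conditions with additive characters, so the identity reduces to a pointwise coefficient count in $(c_1,c_2)$; for \eqref{INEQ:double-T_b-lemma-second-goal} you use AM--GM plus a fiber-counting bound $\phi(n)/\phi(m/d)\le\gcd(n,d)$, which makes the bound in terms of $S^+_{\bm{0}}(n;1)$ fall out automatically, with no appeal to Lemma~\ref{LEM:convenient-generic-bound-on-S^+_0(n;d)}. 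The paper's approach is shorter once multiplicativity is available and hides the elementary arithmetic in CRT; yours is more self-contained and exposes the underlying bijection and counting structure. One small presentational point: in your argument for \eqref{INEQ:double-T_b-lemma-second-goal} you pass from $\sum_{n:\lcm(n,d)=m}$ to $\sum_{rn=m:\,r\mid d}$ by inclusion (the former set is a proper subset of the latter in general, e.g.\ $m=p^2$, $d=n=p$); this is fine here only because $S^+_{\bm{0}}(n;1)\ge 0$, which you do note, but it is worth stating the inclusion explicitly rather than as an equality.
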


\begin{proof}
By Proposition~\ref{PROP:standard-multiplicativity-properties-of-T_a,S^+_0}, it suffices to prove the lemma when $m$ is a prime power.
So suppose $(d,m) = (p^e,p^f)$, where $p$ is prime and $0\leq e\leq f$.

\emph{Case~1: $e<f$.}
Then $\set{n\geq 1: \lcm(n,d)=m} = \set{m}$.
In particular,
\begin{equation}
\begin{split}
\label{EQN:generic-simplify-S^+}
S^+_{\bm{0}}(m;d)
&= \sum_{u\in (\ZZ/m\ZZ)^\times}
\sum_{\bm{x}\in (\ZZ/m\ZZ)^6:\, d\mid F_0(\bm{y}), F_0(\bm{z})} e_m(uF(\bm{x})) \\
&= p^e \sum_{u\in (\ZZ/p^{f-e}\ZZ)^\times}
\sum_{\bm{x}\in (\ZZ/m\ZZ)^6:\, d\mid F_0(\bm{y}), F_0(\bm{z})} e_m(uF(\bm{x}))
\end{split}
\end{equation}
(since $e_m(ud)$ depends only on $u\bmod{p^{f-e}}$).
And if $b\in d\ZZ/m\ZZ$, then
\begin{equation}
\begin{split}
\label{EQN:introduce-d|F_0-implicit-in-T_b}
T_b(m) = \sum_{u\in (\ZZ/m\ZZ)^\times} \sum_{\bm{y}\in (\ZZ/m\ZZ)^3} e_m(uF_b(\bm{y}))
&= \sum_{u\in (\ZZ/m\ZZ)^\times} \sum_{\bm{y}\in (\ZZ/m\ZZ)^3:\, d\mid F_0(\bm{y})} e_m(uF_b(\bm{y})) \\
&= p^e \sum_{u\in (\ZZ/p^{f-e}\ZZ)^\times} \sum_{\bm{y}\in (\ZZ/m\ZZ)^3:\, d\mid F_0(\bm{y})} e_m(uF_b(\bm{y}))
\end{split}
\end{equation}
(since $\sum_{u\in (\ZZ/p^f\ZZ)^\times} e_{p^f}(uF_b(\bm{y})) = 0$ if $p^{f-1}\nmid F_b(\bm{y})$),
whence
\begin{equation*}
T_b(m)^2 = p^{2e} \sum_{u,v\in (\ZZ/p^{f-e}\ZZ)^\times} \sum_{\bm{y},\bm{z}\in (\ZZ/m\ZZ)^3:\, d\mid F_0(\bm{y}), F_0(\bm{z})} e_m(uF_b(\bm{y})+vF_b(\bm{z})).
\end{equation*}
But $\sum_{b\in d\ZZ/m\ZZ} e_m(-ub-vb) = p^{f-e}\cdot \bm{1}_{p^{f-e}\mid u+v}$.
So summing the previous display over $b\in d\ZZ/m\ZZ$, and then using \eqref{EQN:generic-simplify-S^+}, we get $\sum_{b\in d\ZZ/m\ZZ} T_b(m)^2 = p^f S^+_{\bm{0}}(m;d)$.
This suffices for both \eqref{EQN:double-T_b-lemma-first-goal}, \eqref{INEQ:double-T_b-lemma-second-goal}.
(For \eqref{INEQ:double-T_b-lemma-second-goal}, note that $\abs{S^+_{\bm{0}}(m;d)} \leq \abs{S^+_{\bm{0}}(m;1)}$ by Lemma~\ref{LEM:convenient-generic-bound-on-S^+_0(m;d)}).

\emph{Case~2: $e=f$.}
Then $d=m$ and $\set{n\geq 1: \lcm(n,d)=m} = \set{n\geq 1: n\mid m}$.
So
\begin{equation}
\label{EQN:special-simplify-S^+}
S^+_{\bm{0}}(m;d)
= \sum_{n\mid m} \sum_{u\in (\ZZ/n\ZZ)^\times} \sum_{\substack{\bm{x}\in (\ZZ/m\ZZ)^6: \\ m\mid F_0(\bm{y}), F_0(\bm{z})}} e_n(uF(\bm{x}))
= m \sum_{\substack{\bm{x}\in (\ZZ/m\ZZ)^6: \\ m\mid F_0(\bm{y}), F_0(\bm{z})}} 1.
\end{equation}
But $d\ZZ/m\ZZ = 0$ (in \eqref{EQN:double-T_b-lemma-first-goal}, \eqref{INEQ:double-T_b-lemma-second-goal}), and
\begin{equation}
\begin{split}
\label{EQN:special-simplify-T_0/n^3-divisor-sum}
\sum_{n\mid m} \frac{T_0(n)}{n^3}
&= \sum_{n\mid m} \sum_{u\in (\ZZ/n\ZZ)^\times} \EE_{\bm{y}\in (\ZZ/n\ZZ)^3}[e_n(uF_0(\bm{y}))] \\
&= \sum_{n\mid m} \sum_{u\in (\ZZ/n\ZZ)^\times} \EE_{\bm{y}\in (\ZZ/m\ZZ)^3}[e_n(uF_0(\bm{y}))] \\
&= \sum_{u\in \ZZ/m\ZZ} \EE_{\bm{y}\in (\ZZ/m\ZZ)^3}[e_m(uF_0(\bm{y}))]
= m^{-2} \sum_{\bm{y}\in (\ZZ/m\ZZ)^3:\, m\mid F_0(\bm{y})} 1.
\end{split}
\end{equation}
Upon squaring \eqref{EQN:special-simplify-T_0/n^3-divisor-sum}, dividing by $m$, and using \eqref{EQN:special-simplify-S^+}, we get \eqref{EQN:double-T_b-lemma-first-goal}.
For \eqref{INEQ:double-T_b-lemma-second-goal}, note that by Cauchy, $\abs{T_0(n)}^2 \leq \phi(n) \sum_{u\in (\ZZ/n\ZZ)^\times} \abs{\sum_{\bm{y}\in (\ZZ/n\ZZ)^3} e_n(uF_0(\bm{y}))}^2 = \phi(n) S^+_{\bm{0}}(n;1)$, so
\begin{equation}
\label{INEQ:special-T_0-L^2-bound}
\frac{1}{m} \sum_{n_1,n_2\mid m} \frac{\abs{T_0(n_1)T_0(n_2)}}{(n_1n_2)^3}
\leq \frac{\tau(m)}{m} \sum_{n\mid m} \frac{\abs{T_0(n)}^2}{n^6}
\leq \frac{\tau(m)}{m} \sum_{n\mid m} \frac{\phi(n) S^+_{\bm{0}}(n;1)}{n^6}.
\end{equation}
This suffices for \eqref{INEQ:double-T_b-lemma-second-goal} (since $\phi(n)\leq n\leq m$, and $(m/n)\mid m = d$).
\end{proof}

\begin{lemma}
\label{LEM:mixed-T_b-key-calculations}
Let $d,m\geq 1$ be integers with $d\mid m$.
Then
\begin{align}
\sum_{n\geq 1:\, \lcm(n,d) = m}
\frac{1}{m^3} \sum_{\bm{e}\in (\ZZ/m\ZZ)^3:\, F_0(\bm{e})\equiv 0\bmod{d}}
\frac{T_{F_0(\bm{e})}(n)}{n^3}
&= \frac{S^+_{\bm{0}}(m;d)}{m^6},
\label{EQN:mixed-T_b-first-goal} \\
\sum_{n\geq 1:\, \lcm(n,d) = m}\,
\biggl\lvert{\frac{1}{m^3} \sum_{\bm{e}\in (\ZZ/m\ZZ)^3:\, F_0(\bm{e})\equiv 0\bmod{d}}
\frac{T_{F_0(\bm{e})}(n)}{n^3}}\biggr\rvert
&\leq d\cdot \tau(m) \sum_{rn=m:\, r\mid d} \frac{\abs{S^+_{\bm{0}}(n;1)}}{n^6}.
\label{INEQ:mixed-T_b-second-goal}
\end{align}
\end{lemma}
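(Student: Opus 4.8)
\emph{Proof plan for Lemma~\textup{\ref{LEM:mixed-T_b-key-calculations}}.}
The plan is to run an argument closely parallel to the proof of Lemma~\ref{LEM:double-T_b-key-calculations}. The first step is to observe that, for $d\mid m$, both sides of~\eqref{EQN:mixed-T_b-first-goal} and both sides of~\eqref{INEQ:mixed-T_b-second-goal} are multiplicative in the pair $(d,m)$, so that it suffices to treat the case $(d,m)=(p^e,p^f)$ with $0\leq e\leq f$. For the left-hand sides this multiplicativity is the Chinese remainder theorem together with Proposition~\ref{PROP:standard-multiplicativity-properties-of-T_a,S^+_0}: writing $m=m_1m_2$ with $\gcd(m_1,m_2)=1$ and $d=d_1d_2$ with $d_i\mid m_i$, a residue $\bm{e}\bmod m$ corresponds to a pair $(\bm{e}_1,\bm{e}_2)$, the condition $d\mid F_0(\bm{e})$ to $d_i\mid F_0(\bm{e}_i)$, the condition $\lcm(n,d)=m$ to $\lcm(n_i,d_i)=m_i$, and $T_{F_0(\bm{e})}(n)=T_{F_0(\bm{e}_1)}(n_1)T_{F_0(\bm{e}_2)}(n_2)$ because $T_a(n_i)$ depends only on $a\bmod n_i$ and $n_i\mid m_i$. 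For~\eqref{INEQ:mixed-T_b-second-goal} one also uses $S^+_{\bm{0}}(n;1)=\sum_{u\in(\ZZ/n\ZZ)^\times}\lvert\sum_{\bm{y}\in(\ZZ/n\ZZ)^3}e_n(uF_0(\bm{y}))\rvert^2\geq 0$, so that multiplying valid prime-power inequalities is legitimate.

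In the first case, $e<f$, one has $\{n\geq 1:\lcm(n,d)=m\}=\{m\}$. Exactly as in~\eqref{EQN:introduce-d|F_0-implicit-in-T_b}, since $f-1\geq e$ and $d\mid F_0(\bm{e})$, one may insert the constraint $d\mid F_0(\bm{y})$ into $T_{F_0(\bm{e})}(m)=\sum_u\sum_{\bm{y}}e_m(u(F_0(\bm{y})-F_0(\bm{e})))$, after which the summand depends on $u$ only modulo $p^{f-e}$; summing over $\bm{e}$ with $d\mid F_0(\bm{e})$ and comparing with~\eqref{EQN:generic-simplify-S^+} gives $\sum_{\bm{e}:\,d\mid F_0(\bm{e})}T_{F_0(\bm{e})}(m)=S^+_{\bm{0}}(m;d)$, so the left side of~\eqref{EQN:mixed-T_b-first-goal} equals $S^+_{\bm{0}}(m;d)/m^6$. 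For~\eqref{INEQ:mixed-T_b-second-goal} in this case, Lemma~\ref{LEM:convenient-generic-bound-on-S^+_0(n;d)} gives $\lvert S^+_{\bm{0}}(m;d)\rvert\leq\lvert S^+_{\bm{0}}(m;1)\rvert$, and the right side of~\eqref{INEQ:mixed-T_b-second-goal} already dominates $\lvert S^+_{\bm{0}}(m;1)\rvert/m^6$ via the term $r=1$.

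In the second case, $e=f$, i.e.\ $d=m$, so $\{n\geq 1:\lcm(n,d)=m\}=\{n:n\mid m\}$, and $T_{F_0(\bm{e})}(n)=T_0(n)$ whenever $m\mid F_0(\bm{e})$ and $n\mid m$. Writing $M\defeq\#\{\bm{e}\in(\ZZ/m\ZZ)^3:m\mid F_0(\bm{e})\}$, the left side of~\eqref{EQN:mixed-T_b-first-goal} becomes $m^{-3}M\sum_{n\mid m}T_0(n)/n^3$; using~\eqref{EQN:special-simplify-T_0/n^3-divisor-sum} (which gives $\sum_{n\mid m}T_0(n)/n^3=M/m^2$) and~\eqref{EQN:special-simplify-S^+} (which gives $S^+_{\bm{0}}(m;d)=mM^2$), this equals $M^2/m^5=S^+_{\bm{0}}(m;d)/m^6$. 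For~\eqref{INEQ:mixed-T_b-second-goal}, the left side is $\frac{M}{m^3}\sum_{n\mid m}\lvert T_0(n)\rvert/n^3$; since $M/m^2=\sum_{n\mid m}T_0(n)/n^3\geq 0$ one gets $M\leq m^2\sum_{n\mid m}\lvert T_0(n)\rvert/n^3$, so the left side is at most $\frac{1}{m}\bigl(\sum_{n\mid m}\lvert T_0(n)\rvert/n^3\bigr)^2$; Cauchy--Schwarz over the $\tau(m)$ divisors $n\mid m$ and the bound $\lvert T_0(n)\rvert^2\leq\phi(n)S^+_{\bm{0}}(n;1)$ from~\eqref{INEQ:special-T_0-L^2-bound} then bound this by $\frac{\tau(m)}{m}\sum_{n\mid m}\phi(n)S^+_{\bm{0}}(n;1)/n^6\leq\tau(m)\sum_{n\mid m}\lvert S^+_{\bm{0}}(n;1)\rvert/n^6$, which is at most the right side $m\cdot\tau(m)\sum_{rn=m:\,r\mid d}\lvert S^+_{\bm{0}}(n;1)\rvert/n^6$.

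The one genuinely new point compared with Lemma~\ref{LEM:double-T_b-key-calculations} — and the step I expect to require the most care — is the double use of Cauchy--Schwarz in the second case of~\eqref{INEQ:mixed-T_b-second-goal}: first to pass from the count $M$ back to $\sum_{n\mid m}\lvert T_0(n)\rvert/n^3$ via the identity~\eqref{EQN:special-simplify-T_0/n^3-divisor-sum}, and then to estimate the square of that sum by $\sum_{n\mid m}\lvert T_0(n)\rvert^2/n^6$. After that, the remaining slack — the crude inequalities $\phi(n)\leq n\leq m$ and $d=m\geq 1$ — absorbs the extra factor of $d$ on the right of~\eqref{INEQ:mixed-T_b-second-goal}, so I do not anticipate any real obstacle beyond this bookkeeping.
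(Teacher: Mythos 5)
Your proof is correct and follows essentially the same route as the paper's: reduce to prime powers $(d,m)=(p^e,p^f)$ via Proposition~\ref{PROP:standard-multiplicativity-properties-of-T_a,S^+_0}, then invoke~\eqref{EQN:introduce-d|F_0-implicit-in-T_b} and~\eqref{EQN:generic-simplify-S^+} when $e<f$, and~\eqref{EQN:special-simplify-T_0/n^3-divisor-sum}, \eqref{EQN:special-simplify-S^+}, \eqref{INEQ:special-T_0-L^2-bound} when $e=f$. The only cosmetic divergence is the last step of Case~2 for~\eqref{INEQ:mixed-T_b-second-goal}: the paper uses the trivial count $\#\set{\bm{e}: m\mid F_0(\bm{e})}\le m^3=m^2d$ together with $T_0(n)=T_0(n)T_0(1)$, whereas you use the exact identity $M=m^2\sum_{n\mid m}T_0(n)/n^3$ and then square; your chain in fact gives the slightly sharper bound without the factor $d=m$, but both are valid and land on~\eqref{INEQ:special-T_0-L^2-bound}.
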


\begin{proof}
Again,
we may assume that $(d,m) = (p^e,p^f)$, where $p$ is prime and $0\leq e\leq f$.

\emph{Case~1: $e<f$.}
Then $\set{n\geq 1: \lcm(n,d)=m} = \set{m}$.
But by \eqref{EQN:introduce-d|F_0-implicit-in-T_b},
\begin{equation*}
\sum_{\bm{e}\in (\ZZ/m\ZZ)^3:\, F_0(\bm{e})\equiv 0\bmod{d}} T_{F_0(\bm{e})}(m)
= \sum_{u\in (\ZZ/m\ZZ)^\times} \sum_{\bm{e},\bm{y}\in (\ZZ/m\ZZ)^3:\,
d\mid F_0(\bm{e}), F_0(\bm{y})} e_m(u(F_0(\bm{y})-F_0(\bm{e}))),
\end{equation*}
which equals $S^+_{\bm{0}}(m;d)$ by \eqref{EQN:generic-simplify-S^+}.
Both \eqref{EQN:mixed-T_b-first-goal}, \eqref{INEQ:mixed-T_b-second-goal} follow (in the present case).

\emph{Case~2: $e=f$.}
Then $d=m$ and $\set{n\geq 1: \lcm(n,d)=m} = \set{n\geq 1: n\mid m}$.
So \eqref{EQN:mixed-T_b-first-goal} follows from \eqref{EQN:special-simplify-T_0/n^3-divisor-sum} and \eqref{EQN:special-simplify-S^+}.
Also, \eqref{INEQ:mixed-T_b-second-goal} follows from \eqref{INEQ:special-T_0-L^2-bound} (since $T_0(n) = T_0(n) T_0(1)$ for all $n\ge 1$, and we have $\#\set{\bm{e}\in (\ZZ/m\ZZ)^3: F_0(\bm{e})\equiv 0\bmod{d}} \leq m^3 = m^2 d$ trivially).
\end{proof}

\begin{lemma}
\label{LEM:unbalanced-T_b-moment-vanishing}
Suppose $n_1,n_2,d\geq 1$ are integers with $\lcm(n_1,d)\neq \lcm(n_2,d)$.
Then
\begin{equation}
\label{EQN:unbalanced-T_b-moment-vanishing}
\sum_{b\in d\ZZ/n_1n_2d\ZZ} T_b(n_1)T_b(n_2) = 0.
\end{equation}
\end{lemma}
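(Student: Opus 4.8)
The plan is to exploit the multiplicativity of $T_b(n)$ in the level variable together with the Chinese remainder theorem to reduce to the case where $n_1$, $n_2$, $d$ are all powers of a single prime $p$. Concretely, write $n_1n_2d = \prod_p p^{g_p}$; a tuple $b \in d\ZZ/n_1n_2d\ZZ$ corresponds via CRT to a tuple $(b_p)_p$ with $b_p \in d_p\ZZ/p^{g_p}\ZZ$, where $d_p = p^{v_p(d)}$. Since $T_b(n_1)T_b(n_2) = \prod_p T_{b_p}(p^{v_p(n_1)}) T_{b_p}(p^{v_p(n_2)})$ by Proposition~\ref{PROP:standard-multiplicativity-properties-of-T_a,S^+_0} (with $T_b(p^0)=T_b(1)=1$), the whole sum factors as a product over $p$ of local sums $\sum_{b_p} T_{b_p}(p^{v_p(n_1)}) T_{b_p}(p^{v_p(n_2)})$. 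The hypothesis $\lcm(n_1,d) \neq \lcm(n_2,d)$ means there is at least one prime $p$ with $\max(v_p(n_1), v_p(d)) \neq \max(v_p(n_2), v_p(d))$; it suffices to show the local factor at that $p$ vanishes.

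So I reduce to the following: with $a_1 = v_p(n_1)$, $a_2 = v_p(n_2)$, $e = v_p(d)$, and $\max(a_1,e) \neq \max(a_2,e)$, show $\sum_{b \in p^e\ZZ/p^{a_1+a_2+e}\ZZ} T_b(p^{a_1}) T_b(p^{a_2}) = 0$. WLOG $\max(a_1,e) < \max(a_2,e)$; since $e \le \max(a_1,e)$ this forces $a_2 > e$ and $a_2 > a_1$, and in particular $a_2 \ge 1$. Now expand: $T_b(p^{a_2}) = \sum_{u \in (\ZZ/p^{a_2})^\times} \sum_{\bm{y}} e_{p^{a_2}}(u F_b(\bm{y}))$, and $T_b(p^{a_1})$ similarly (with $T_b(p^0)=1$ if $a_1=0$). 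The dependence of the summand on $b$ is through $e_{p^{a_1}}(-u_1 b)\, e_{p^{a_2}}(-u_2 b)$, i.e.\ through an additive character of $b$ whose "conductor" is governed by $p^{a_2}$ since $a_2 > a_1$. Summing $e_{p^{a_1+a_2+e}}(\,\cdot\, b)$-style characters over $b$ running through the coset $p^e\ZZ/p^{a_1+a_2+e}\ZZ$: the key point is that after clearing denominators the character on $b$ is $e_{p^{a_2}}(-(u_2 + p^{a_2-a_1}u_1)b)$, and since $u_2 \in (\ZZ/p^{a_2})^\times$ while $a_2 - a_1 \ge 1$, the coefficient $u_2 + p^{a_2-a_1}u_1$ is a unit mod $p^{a_2}$. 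Because $e < a_2$, summing such a nontrivial character over $b$ in the coset $p^e\ZZ/p^{a_1+a_2+e}\ZZ$ (a union of full residue classes mod $p^{a_2}$, scaled) gives $0$. Hence every term vanishes and the local sum is $0$.

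The main obstacle, and the step to carry out carefully, is the bookkeeping of moduli in that last character sum: one must track that $b$ ranges over $p^e\ZZ/p^{a_1+a_2+e}\ZZ$ (size $p^{a_1+a_2}$), that the relevant character of $b$ factors through $\ZZ/p^{a_2}\ZZ$ with a unit coefficient, and that $a_2 > e$ guarantees orthogonality kills the sum — this is where the hypothesis $\lcm(n_1,d)\neq\lcm(n_2,d)$ is genuinely used (it is exactly what rules out the degenerate case $a_1 = a_2 \le e$ where the character would be trivial). A clean way to organize this is to first note $e_{p^{a_1}}(-u_1 b) = e_{p^{a_2}}(-p^{a_2-a_1}u_1 b)$ for $b$ in the relevant range (valid since $b$ is defined mod $p^{a_1+a_2+e}$ and $a_1 + (a_2-a_1) + \text{(slack)}$ accommodates it), combine the two exponentials into $e_{p^{a_2}}(-(u_2+p^{a_2-a_1}u_1)b)$, and then evaluate $\sum_{b\in p^e\ZZ/p^{a_1+a_2+e}\ZZ} e_{p^{a_2}}(-cb)$ with $c$ a unit: this is a complete character sum over $b/p^e$ running mod $p^{a_1+a_2}$ of a character of exact order $p^{a_2-e} > 1$, hence zero. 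The remainder — swapping orders of summation to isolate the $b$-sum, and handling the harmless edge case $a_1 = 0$ — is routine.
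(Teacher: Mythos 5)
Your proof is correct. You take a genuinely different route from the paper, though both rest on the same underlying orthogonality idea. You first reduce to the case $n_1=p^{a_1}$, $n_2=p^{a_2}$, $d=p^e$ via CRT and the multiplicativity in Proposition~\ref{PROP:standard-multiplicativity-properties-of-T_a,S^+_0}, then expand both $T_b(p^{a_1})$ and $T_b(p^{a_2})$, merge the two exponentials in $b$ into a single character $e_{p^{a_2}}\bigl(-(u_2+p^{a_2-a_1}u_1)b\bigr)$ with a unit coefficient, and kill the $b$-sum by orthogonality since $a_2>e$. The paper avoids the prime-power reduction entirely: setting $r=\lcm(n_1,d)$ (after WLOG $\lcm(n_1,d)<\lcm(n_2,d)$), it observes that $T_b(n_1)$ depends on $b$ only modulo $n_1\mid r$, so it can condition on $b\equiv c\bmod r$, factor out $T_c(n_1)$, and show the inner sum $\sum_{b\equiv c\,(r)} T_b(n_2)$ vanishes because $n_2\nmid r$ makes the relevant additive character nontrivial on the coset. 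The paper's version is shorter and avoids tracking $p$-adic valuations, but your version makes the mechanism very concrete; one minor bookkeeping point you flag yourself (the $a_1=0$ edge case, where there is no $u_1$) does need to be spelled out, but the character sum collapses in exactly the same way there.
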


\begin{proof}
We may assume $\lcm(n_1,d) < \lcm(n_2,d)$.
Let $r\defeq \lcm(n_1,d)$; then $n_2\nmid r$.
So $\sum_{a\in \gcd(n_2,r)\ZZ/n_2\ZZ} e_{n_2}(-ua) = 0$ for all $u\in (\ZZ/n_2\ZZ)^\times$.
Thus $\sum_{b\in \ZZ/n_1n_2d\ZZ:\, b\equiv c\bmod{r}} T_b(n_2) = 0$ for all $c\in \ZZ/r\ZZ$.
Multiplying by $T_c(n_1)$ and summing over $c\in d\ZZ/r\ZZ$, we get \eqref{EQN:unbalanced-T_b-moment-vanishing}.
\end{proof}

\begin{proposition}
\label{PROP:truncated-non-archimedean-local-probabilistic-calculation}
Let $d,K\geq 1$ be integers.
Then
\begin{align}
\sum_{n_1,n_2\leq K} \frac{1}{n_1n_2d} \sum_{b\in d\ZZ/n_1n_2d\ZZ} \frac{T_b(n_1)T_b(n_2)}{(n_1n_2)^3}
&= \mf{S}(d) + O_\eps(d^{5/3} K^{-2/3+\eps}),
\label{EQN:pure-T_b-L^2-moment} \\
\sum_{n\leq K} \frac{1}{(nd)^3} \sum_{\bm{e}\in (\ZZ/nd\ZZ)^3:\, F_0(\bm{e})\equiv 0\bmod{d}} \frac{T_{F_0(\bm{e})}(n)}{n^3}
&= \mf{S}(d) + O_\eps(d^{5/3} K^{-2/3+\eps}).
\label{EQN:mixed-T_b-L^1-moment}
\end{align}
\end{proposition}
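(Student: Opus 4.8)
The plan is to handle both identities in parallel: transform each left-hand side into a sum indexed by a single modulus $m$ with $d\mid m$, identify the corresponding \emph{complete} (untruncated) sum with $\mf{S}(d)$ using \eqref{EQN:F-singular-series-d} and the ``first goal'' identities \eqref{EQN:double-T_b-lemma-first-goal} and \eqref{EQN:mixed-T_b-first-goal}, and bound the truncation tail using the ``second goal'' estimates \eqref{INEQ:double-T_b-lemma-second-goal} and \eqref{INEQ:mixed-T_b-second-goal} together with Lemma~\ref{LEM:sum-S^+_0(n)-trivially}.

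Consider \eqref{EQN:pure-T_b-L^2-moment} first. By Lemma~\ref{LEM:unbalanced-T_b-moment-vanishing}, a pair $(n_1,n_2)$ contributes only when $\lcm(n_1,d)=\lcm(n_2,d)=:m$, and then $\lcm(n_1,n_2,d)=m$ (each $n_i\mid m$ and $d\mid m$, while $m=\lcm(n_1,d)\mid\lcm(n_1,n_2,d)$), so in particular $m\mid n_1n_2d$. Since $T_b(n_1)T_b(n_2)$ depends on $b$ only modulo $\lcm(n_1,n_2)\mid m$, it is constant on the fibers --- each of size $n_1n_2d/m$ --- of the surjection $d\ZZ/n_1n_2d\ZZ\twoheadrightarrow d\ZZ/m\ZZ$, whence
\begin{equation*}
\frac{1}{n_1n_2d}\sum_{b\in d\ZZ/n_1n_2d\ZZ}T_b(n_1)T_b(n_2)=\frac{1}{m}\sum_{b\in d\ZZ/m\ZZ}T_b(n_1)T_b(n_2).
\end{equation*}
Regrouping the left-hand side of \eqref{EQN:pure-T_b-L^2-moment} by $m$ and dropping the truncation $n_1,n_2\le K$, the inner sum over $(n_1,n_2)$ with $\lcm(n_i,d)=m$ becomes $S^+_{\bm{0}}(m;d)/m^6$ by \eqref{EQN:double-T_b-lemma-first-goal}, and then $\sum_{d\mid m}m^{-6}S^+_{\bm{0}}(m;d)=\mf{S}(d)$ by \eqref{EQN:F-singular-series-d}; the rearrangement is legitimate because the complete double sum converges absolutely, by \eqref{INEQ:double-T_b-lemma-second-goal}, Lemma~\ref{LEM:sum-S^+_0(n)-trivially}, and a dyadic decomposition.

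It remains to bound the tail, i.e.\ the contribution of pairs with $\max(n_1,n_2)>K$; since $n_i\mid m$, any such pair has $m>K$, so by \eqref{INEQ:double-T_b-lemma-second-goal} the tail is
\begin{equation*}
\ll\sum_{\substack{m>K\\d\mid m}}\tau(m)\sum_{rn=m:\,r\mid d}\frac{\abs{S^+_{\bm{0}}(n;1)}}{n^6}\ll_\eps\sum_{r\mid d}r^\eps\sum_{n>K/r}n^{\eps-6}\abs{S^+_{\bm{0}}(n;1)}\ll_\eps\sum_{r\mid d}r^\eps\,(K/r)^{-2/3+\eps},
\end{equation*}
where in the middle step I used $\tau(m)\ll_\eps m^\eps$, wrote $m=rn$, and discarded the condition $d\mid m$, and in the last step I summed dyadically using Lemma~\ref{LEM:sum-S^+_0(n)-trivially} with $d=1$. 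The final sum is $O_\eps(d^{2/3+\eps}K^{-2/3+\eps})$, which is $\ll d^{5/3}K^{-2/3+\eps}$; this proves \eqref{EQN:pure-T_b-L^2-moment}. The proof of \eqref{EQN:mixed-T_b-L^1-moment} runs along the same lines but with a single parameter $n$ (so Lemma~\ref{LEM:unbalanced-T_b-moment-vanishing} is not needed): the summand $\bm{1}_{d\mid F_0(\bm{e})}\,T_{F_0(\bm{e})}(n)$ depends on $\bm{e}\in(\ZZ/nd\ZZ)^3$ only modulo $m\defeq\lcm(n,d)$, so averaging over the fibers of size $(nd/m)^3$ of $(\ZZ/nd\ZZ)^3\twoheadrightarrow(\ZZ/m\ZZ)^3$ replaces $(nd)^{-3}\sum_{\bm{e}\in(\ZZ/nd\ZZ)^3}$ by $m^{-3}\sum_{\bm{e}\in(\ZZ/m\ZZ)^3}$; then \eqref{EQN:mixed-T_b-first-goal}, \eqref{EQN:F-singular-series-d}, and \eqref{INEQ:mixed-T_b-second-goal} give the same conclusion up to the extra factor $d$ present in \eqref{INEQ:mixed-T_b-second-goal}, hence the stated bound $O_\eps(d^{5/3}K^{-2/3+\eps})$.

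I expect the only real subtlety to lie in the reduction-and-regrouping step --- verifying $\lcm(n_1,n_2,d)=m$ on the surviving diagonal, the fiber counts, and absolute convergence of the complete double sum so that the rearrangement into $\sum_m m^{-6}S^+_{\bm{0}}(m;d)$ is valid --- since all the delicate arithmetic relating $S^+_{\bm{0}}$ to the character sums $T_b$ has already been carried out in Lemmas~\ref{LEM:double-T_b-key-calculations}--\ref{LEM:unbalanced-T_b-moment-vanishing}.
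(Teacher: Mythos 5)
Your proof is correct and follows essentially the same route as the paper's: regroup by $m=\lcm(n_1,d)=\lcm(n_2,d)$ (resp.\ $m=\lcm(n,d)$), kill the off-diagonal via Lemma~\ref{LEM:unbalanced-T_b-moment-vanishing}, identify the complete sum with $\mf{S}(d)$ via \eqref{EQN:double-T_b-lemma-first-goal}/\eqref{EQN:mixed-T_b-first-goal} and \eqref{EQN:F-singular-series-d}, and bound the $m>K$ tail via \eqref{INEQ:double-T_b-lemma-second-goal}/\eqref{INEQ:mixed-T_b-second-goal} and Lemma~\ref{LEM:sum-S^+_0(n)-trivially}. One small bookkeeping point in \eqref{EQN:mixed-T_b-L^1-moment}: splitting $\tau(m)\ll_\eps m^\eps = r^\eps n^\eps$ before summing over $r\mid d$ leaves you with $\sum_{r\mid d} r^{2/3}\ll_\eps d^{2/3+\eps}$ and hence $d^{5/3+\eps}K^{-2/3+\eps}$, which is not literally $O_\eps(d^{5/3}K^{-2/3+\eps})$ when $d$ is large compared to $K$; instead, absorb $\tau(m)\ll_\eps N^\eps$ into the dyadic block $m\in[N,2N)$ first and use $\sum_{r\mid d} r^{2/3-\eps}\ll_\eps d^{2/3}$ (exactly as in the proof of Lemma~\ref{LEM:sum-S^+_0(n)-trivially}) to land cleanly on $d^{5/3}K^{-2/3+\eps}$.
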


\begin{proof}

If $n_1,n_2\geq 1$ are integers with $\lcm(n_1,d) = \lcm(n_2,d) = m$, say, then $d\mid m$ and
\begin{equation*}
\frac{1}{n_1n_2d}
\sum_{b\in d\ZZ/n_1n_2d\ZZ} \frac{T_b(n_1)T_b(n_2)}{(n_1n_2)^3}
= \frac{1}{m} \sum_{b\in d\ZZ/m\ZZ} \frac{T_b(n_1)T_b(n_2)}{(n_1n_2)^3}.
\end{equation*}
By Lemmas~\ref{LEM:unbalanced-T_b-moment-vanishing} and~\ref{LEM:double-T_b-key-calculations}, it follows that the left-hand side of \eqref{EQN:pure-T_b-L^2-moment} equals
\begin{equation*}
\begin{split}
&\sum_{m\geq 1:\, d\mid m}
\sum_{\substack{n_1,n_2\leq K: \\ \lcm(n_1,d) = m,\; \lcm(n_2,d) = m}}
\frac{1}{m} \sum_{b\in d\ZZ/m\ZZ} \frac{T_b(n_1)T_b(n_2)}{(n_1n_2)^3} \\
&= \sum_{m\leq K:\, d\mid m} \frac{S^+_{\bm{0}}(m;d)}{m^6}
+ \sum_{m>K:\, d\mid m} \tau(m) \sum_{rn=m:\, r\mid d} \frac{O(\abs{S^+_{\bm{0}}(n;1)})}{n^6} \\
&= \mf{S}(d) + O_\eps(d^{5/3} K^{-2/3+\eps}),
\end{split}
\end{equation*}
where in the final step we write $\sum_{m\le K:\, d\mid m} = \sum_{m\ge 1:\, d\mid m} - \sum_{m>K:\, d\mid m}$, use \eqref{EQN:F-singular-series-d} to write $\sum_{m\ge 1:\, d\mid m} S^+_{\bm{0}}(m;d)/m^6 = \mf{S}(d)$, and use Lemma~\ref{LEM:sum-S^+_0(m)-trivially} to bound the $m>K$ contributions.

The second part, \eqref{EQN:mixed-T_b-L^1-moment}, is similar, but simpler.
If $n\geq 1$ and $\lcm(n,d) = m$, say, then
\begin{equation*}
\frac{1}{(nd)^3} \sum_{\bm{e}\in (\ZZ/nd\ZZ)^3:\, F_0(\bm{e})\equiv 0\bmod{d}} \frac{T_{F_0(\bm{e})}(n)}{n^3}
= \frac{1}{m^3} \sum_{\bm{e}\in (\ZZ/m\ZZ)^3:\, F_0(\bm{e})\equiv 0\bmod{d}} \frac{T_{F_0(\bm{e})}(n)}{n^3}.
\end{equation*}
By Lemma~\ref{LEM:mixed-T_b-key-calculations}, the left-hand side of \eqref{EQN:mixed-T_b-L^1-moment} thus equals
\begin{equation*}
\begin{split}
&\sum_{m\geq 1:\, d\mid m}
\sum_{\substack{n\leq K: \\ \lcm(n,d) = m}}
\frac{1}{m^3} \sum_{\bm{e}\in (\ZZ/m\ZZ)^3:\, F_0(\bm{e})\equiv 0\bmod{d}} \frac{T_{F_0(\bm{e})}(n)}{n^3} \\
&= \sum_{m\leq K:\, d\mid m} \frac{S^+_{\bm{0}}(m;d)}{m^6}
+ \sum_{m>K:\, d\mid m} d\cdot \tau(m) \sum_{rn=m:\, r\mid d} \frac{O(\abs{S^+_{\bm{0}}(n;1)})}{n^6} \\
&= \mf{S}(d) + O_\eps(d^{5/3} K^{-2/3+\eps}),
\end{split}
\end{equation*}
where in the last step we again use \eqref{EQN:F-singular-series-d} and Lemma~\ref{LEM:sum-S^+_0(m)-trivially}.
This completes the proof.
\end{proof}

\subsection{Archimedean work}

Let $\nu\in C^\infty_c(\RR^3)$, and suppose $\bm{0}\notin \Supp\nu$.
Given $X\in \RR_{>0}$ and $(\bm{y},a)\in \RR^3\times\RR$,
write $\tilde{\bm{y}}\defeq \bm{y}/X$ and $\tilde{a}\defeq a/X^3$.
By \eqref{EQN:define-F_a-real-density} (after replacing $\eps$ with $X^3\eps$),
\begin{equation}
\label{EQN:rescale-real-density-integral}
\sigma_{\infty,a,\nu}(X)
= \lim_{\eps\to 0}{(2\eps)^{-1}
\int_{\tilde{\bm{y}}\in \RR^3:\, \abs{F_0(\tilde{\bm{y}})-\tilde{a}}\leq\eps}
d\tilde{\bm{y}}\, \nu(\tilde{\bm{y}})
= \sigma_{\infty,\tilde{a},\nu}(1)}.
\end{equation}

For convenience
(when working with $\sigma_{\infty,a,\nu}(X)$),
we now observe the following:
\begin{observation}
Suppose $\abs{y_1}\geq \delta>0$ for all $\bm{y}\in \Supp\nu$.
Let $(a,X)\in \RR\times\RR_{>0}$.
Then a change of variables in \eqref{EQN:rescale-real-density-integral} from $\tilde{y}_1$ to
$F_0\defeq F_0(\tilde{\bm{y}})$ gives (by \cite{chambert2010igusa}*{\S5.4, par.~4})
\begin{equation}
\label{EQN:surface-integral-representation-of-real-density}
\sigma_{\infty,a,\nu}(X)
= \int_{\RR^2} d\tilde{y}_2\,d\tilde{y}_3\,\nu(\tilde{\bm{y}})
\cdot \abs{\partial F_0/\partial\tilde{y}_1}^{-1},
\end{equation}
where $F_0,\tilde{y}_1$ are constrained by the equation $F_0=\tilde{a}$ (and thus determined by $\tilde{y}_1, \tilde{y}_2$),
and where $\partial F_0/\partial\tilde{y}_1 = 3\tilde{y}_1^2\geq 3\delta^2>0$ over the support of the integrand.
\end{observation}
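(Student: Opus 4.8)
The plan is to carry out exactly the substitution indicated in the statement and then justify the passage to the limit $\eps\to 0$ by dominated convergence; the statement itself already tells us the computation, so the only real content is making the change of variables and limit rigorous.

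First I would reduce to $X=1$. By \eqref{EQN:rescale-real-density-integral} it suffices to show, for every $\tilde a\in\RR$, that
\[
\sigma_{\infty,\tilde a,\nu}(1) = \lim_{\eps\to 0}(2\eps)^{-1}\int_{\abs{F_0(\tilde{\bm y})-\tilde a}\leq\eps} d\tilde{\bm y}\,\nu(\tilde{\bm y})
\]
equals $\int_{\RR^2} d\tilde y_2\,d\tilde y_3\,\nu(\tilde{\bm y})\,(3\tilde y_1^2)^{-1}$, where $\tilde y_1\defeq (\tilde a-\tilde y_2^3-\tilde y_3^3)^{1/3}$ (real cube root), so that $F_0=\tilde a$ along the integrand. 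Throughout, $\Supp\nu$ is compact and contained in $\{\abs{\tilde y_1}\geq\delta\}$, so every integrand below is supported in a fixed compact set, and wherever $\nu\neq 0$ one has $\partial F_0/\partial\tilde y_1 = 3\tilde y_1^2\geq 3\delta^2>0$.

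Next, for each fixed $(\tilde y_2,\tilde y_3)$ the map $\tilde y_1\mapsto F_0(\tilde{\bm y})=\tilde y_1^3+\tilde y_2^3+\tilde y_3^3$ is a smooth strictly increasing bijection $\RR\to\RR$; in particular it is injective, and its restriction to $\{\abs{\tilde y_1}\geq\delta\}$ is a diffeomorphism onto its image, with inverse $F_0\mapsto(F_0-\tilde y_2^3-\tilde y_3^3)^{1/3}$ and derivative $3\tilde y_1^2$. Applying Fubini in the order $\int d\tilde y_2\,d\tilde y_3\,(\int d\tilde y_1)$ and changing variables from $\tilde y_1$ to $F_0$ in the inner integral — legitimate because on $\Supp\nu$ we remain in $\{\abs{\tilde y_1}\geq\delta\}$, where the substitution is a diffeomorphism — gives
\[
\int_{\abs{F_0(\tilde{\bm y})-\tilde a}\leq\eps} d\tilde{\bm y}\,\nu(\tilde{\bm y}) = \int_{\RR^2} d\tilde y_2\,d\tilde y_3 \int_{\abs{F_0-\tilde a}\leq\eps} dF_0\, g(F_0;\tilde y_2,\tilde y_3),
\]
where $g(F_0;\tilde y_2,\tilde y_3)\defeq \nu(\tilde{\bm y})\,(3\tilde y_1^2)^{-1}$ with $\tilde y_1=(F_0-\tilde y_2^3-\tilde y_3^3)^{1/3}$, extended by $0$ wherever $\abs{\tilde y_1}<\delta$ — consistently, since $\nu(\tilde{\bm y})=0$ there (as $\{\abs{\tilde y_1}<\delta\}$ is disjoint from $\Supp\nu$). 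Thus $g$ is continuous in $F_0$ (smooth where $\nu\neq 0$, identically $0$ near any $F_0$ with $\abs{\tilde y_1}<\delta$), satisfies $\abs{g}\leq \norm{\nu}_{0,\infty}(3\delta^2)^{-1}$, and vanishes for $(\tilde y_2,\tilde y_3)$ outside the compact projection of $\Supp\nu$ to the last two coordinates.

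Finally, divide by $2\eps$ and let $\eps\to 0$. For each $(\tilde y_2,\tilde y_3)$, continuity of $g$ in $F_0$ gives $(2\eps)^{-1}\int_{\abs{F_0-\tilde a}\leq\eps} dF_0\,g \to g(\tilde a;\tilde y_2,\tilde y_3)$; and $(2\eps)^{-1}\int_{\abs{F_0-\tilde a}\leq\eps} dF_0\,\abs{g}$ is bounded, uniformly in $\eps$, by the integrable function $\norm{\nu}_{0,\infty}(3\delta^2)^{-1}$ times the indicator of that compact projection. Dominated convergence then yields $\sigma_{\infty,\tilde a,\nu}(1) = \int_{\RR^2} d\tilde y_2\,d\tilde y_3\, g(\tilde a;\tilde y_2,\tilde y_3)$, which is \eqref{EQN:surface-integral-representation-of-real-density} after undoing the rescaling $\tilde a=a/X^3$, and $\abs{\partial F_0/\partial\tilde y_1}=3\tilde y_1^2\geq 3\delta^2$ over the support of the integrand. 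The one step deserving care is the Fubini/change-of-variables move: one must invoke the \emph{global} monotonicity of $t\mapsto t^3$ to conclude that, even though the slab $\abs{\tilde y_1}<\delta$ has been excised, $\tilde y_1\mapsto F_0$ is still injective, so that no fiber point is counted twice; everything else is bookkeeping.
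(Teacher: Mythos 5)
Your proof is correct and follows the same route the paper implicitly takes: the paper's proof is just the citation ``Cf.~\cite{heath1996new}*{proof of Lemma~11},'' i.e.\ it defers to Heath-Brown for exactly the change of variables $\tilde y_1 \mapsto F_0$ followed by a limit in $\eps$, and you have simply filled in the routine Fubini, substitution, and dominated-convergence details (including the useful observation that global monotonicity of $t\mapsto t^3$ ensures injectivity even after excising the slab $\abs{\tilde y_1}<\delta$).
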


At least in the absence of better surface coordinates,
the definition \eqref{EQN:define-F_a-real-density} (via ``$\eps$-thickening'') still provides greater intuition,
while the surface integral allows for effortless rigor.
The area form $d\tilde{y}_2\,d\tilde{y}_3\,\abs{\partial F_0/\partial\tilde{y}_1}^{-1}$ in \eqref{EQN:surface-integral-representation-of-real-density} is often called a \emph{Leray form}, as in \cite{peyre1995hauteurs}.

We now prove three results on real densities.
For technical convenience, we assume for the rest of \S\ref{SEC:general-variance-setup-and-estimation} that $\nu$ is clean (see \eqref{EQN:condition-for-clean}).
Let $B(\nu)$ and $\norm{\nu}_{k,\infty}$ be as in \S\ref{SUBSEC:conventions}.

\begin{proposition}
\label{PROP:uniform-bound-on-derivatives-of-densities}
For integers $k\geq 0$,
we have $\partial_{\tilde{a}}^k[\sigma_{\infty,a,\nu}(X)]\ll_k \norm{\nu}_{k,\infty} B(\nu)^{A_0+A_1k}$ (for some constants $A_0, A_1\in [1, 10]$), uniformly over $(a, X)\in \RR\times \RR_{>0}$.
\end{proposition}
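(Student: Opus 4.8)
The plan is to differentiate the surface‐integral representation \eqref{EQN:surface-integral-representation-of-real-density} with respect to $\tilde a$, after reducing to the case in which some fixed coordinate is bounded away from $0$ on $\Supp\nu$. By symmetry of $F_0$ in $\tilde y_1,\tilde y_2,\tilde y_3$, and a partition of unity argument, it suffices to handle the case where $\abs{\tilde y_1}\geq \delta$ on $\Supp\nu$ for a suitable $\delta\asymp 1/B(\nu)$; the partition of unity costs only an absolute constant (three pieces) and a bounded increase in the Sobolev norm and in $B$. On that piece, \eqref{EQN:surface-integral-representation-of-real-density} writes $\sigma_{\infty,a,\nu}(X)$ as an honest integral over $(\tilde y_2,\tilde y_3)\in\RR^2$ of the function $\nu(\tilde{\bm y})\cdot\abs{3\tilde y_1^2}^{-1}$, where $\tilde y_1=\tilde y_1(\tilde a,\tilde y_2,\tilde y_3)$ is the branch of the cube root solving $\tilde y_1^3 = \tilde a - \tilde y_2^3 - \tilde y_3^3$ that keeps $\tilde y_1$ near the support; this branch is smooth in $\tilde a$ on the relevant region because $\partial F_0/\partial\tilde y_1 = 3\tilde y_1^2\geq 3\delta^2 > 0$ there.

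\textbf{Key steps.} First I would record $\partial\tilde y_1/\partial\tilde a = (3\tilde y_1^2)^{-1} = (\partial F_0/\partial\tilde y_1)^{-1}$ by implicit differentiation, and more generally show inductively that $\partial_{\tilde a}^j \tilde y_1$ is a polynomial in $\tilde y_1^{-1}$ (equivalently, a rational function of $\tilde y_1$ with denominator a power of $\tilde y_1$) of controlled degree, so that on the region $\delta\le\abs{\tilde y_1}\le B(\nu)$ one has $\abs{\partial_{\tilde a}^j\tilde y_1}\ll_j \delta^{-(2j-1)}\le B(\nu)^{2j-1}$. Next, differentiate $k$ times under the integral sign in \eqref{EQN:surface-integral-representation-of-real-density}: each application of $\partial_{\tilde a}$ either hits $\nu(\tilde{\bm y})$ through the chain rule (producing $\partial_{\tilde y_1}\nu\cdot\partial_{\tilde a}\tilde y_1$) or hits the Jacobian factor $\abs{3\tilde y_1^2}^{-1} = (3\tilde y_1^2)^{-1}$ (whose $\tilde a$‑derivatives are again rational in $\tilde y_1$ with poles only at $\tilde y_1=0$, hence bounded by a power of $B(\nu)$ on the region). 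Collecting terms, $\partial_{\tilde a}^k[\sigma_{\infty,a,\nu}(X)]$ is a finite sum (of $O_k(1)$ terms) of integrals over $(\tilde y_2,\tilde y_3)$ of products of: at most $k$ derivatives of $\nu$ (bounded by $\norm{\nu}_{k,\infty}$), a bounded number of factors $\partial_{\tilde a}^j\tilde y_1$ with $\sum j\le k$ (bounded by a power $B(\nu)^{O(k)}$), and powers of $\tilde y_1^{-1}$ (bounded by a power of $B(\nu)$). Finally, the domain of integration is contained in $\{\abs{\tilde y_2},\abs{\tilde y_3}\le B(\nu)\}$ since $\Supp\nu$ lies there, so the area of integration is $\ll B(\nu)^2$, contributing another bounded power of $B(\nu)$. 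Tallying exponents gives a bound of the shape $\norm{\nu}_{k,\infty}B(\nu)^{A_0+A_1 k}$ with absolute constants $A_0,A_1$; a crude accounting (roughly $2$ from the area, $2$ from each $\tilde y_1^{-1}$ or Jacobian power, $2j-1\le 2k$ from the $\partial_{\tilde a}^j\tilde y_1$ factors) comfortably fits $A_0,A_1\in[1,10]$, and since everything depends on $a$ only through $\tilde a = a/X^3$ and $\sigma_{\infty,a,\nu}(X)=\sigma_{\infty,\tilde a,\nu}(1)$ by \eqref{EQN:rescale-real-density-integral}, the estimate is uniform in $(a,X)$.

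\textbf{Main obstacle.} The only real subtlety is bookkeeping: making sure that differentiating the implicitly defined $\tilde y_1$ and the Jacobian $\abs{3\tilde y_1^2}^{-1}$ repeatedly produces only powers of $\tilde y_1^{-1}$ with exponents that grow \emph{linearly} in $k$ (not worse), and that the lower bound $\abs{\tilde y_1}\ge\delta\gg 1/B(\nu)$ genuinely holds on each piece of the partition of unity with the constant in $\delta$ absolute. Once the inductive claim $\partial_{\tilde a}^j\tilde y_1 = (\text{polynomial of degree}\le 2j-1\text{ in }\tilde y_1^{-1})$ is in hand, the exponent count is mechanical; I would state that inductive claim as a short sub-lemma and then assemble the final bound by collecting the (finitely many, $O_k(1)$) terms produced by the generalized product/chain rule. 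No genuinely hard estimate is needed—only care that all implied constants are absolute and that $B(\nu)$ absorbs every polynomial factor in the support size and in the reciprocal of the distance from $\Supp\nu$ to the coordinate hyperplanes.
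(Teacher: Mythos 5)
Your proposal is correct and follows essentially the paper's approach: establish the bound on a coordinate-localized piece via the surface-integral representation \eqref{EQN:surface-integral-representation-of-real-density} and implicit differentiation of $\tilde y_1$ in $\tilde a$, then reduce the general case by a partition of unity (yours finite, adapted to the slabs $\abs{y_i}\gg 1/B(\nu)$; the paper's continuous, writing $\nu=\int_{\bm t}\nu_{\bm t}\,d\bm t$ using translates of a fixed bump function at scale $1/B(\nu)$). One small caveat: the cutoffs in any such partition must transition on scale $\asymp 1/B(\nu)$ and hence cost a factor $\ll_k B(\nu)^k$, not $O(1)$, in the Sobolev norm, so the ``bounded increase in the Sobolev norm'' should read ``polynomial in $B(\nu)$''---this is harmless, since it only shifts $A_1$ by $1$ and still lands in $[1,10]$.
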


\begin{proof}
The integrand on the right in \eqref{EQN:surface-integral-representation-of-real-density} vanishes unless $\tilde{a}\ll B(\nu)^3$ and $\tilde{y}_1^{-1},\tilde{y}_2,\tilde{y}_3\ll B(\nu)$.
Fix $\tilde{y}_2,\tilde{y}_3$,
and let $\tilde{y}_1$ vary with $\tilde{a}$ according to $F_0=\tilde{a}$.
Then $\partial_{\tilde{a}}[\tilde{y}_1] = (3\tilde{y}_1^2)^{-1}\ll B(\nu)^2$.
Now repeatedly apply $\partial_{\tilde{a}}$ to the integrand
(using Leibniz and the chain rule).
This gives $\partial_{\tilde{a}}^k[\sigma_{\infty,a,\nu}(X)]
\ll_k \int_{\tilde{y}_2,\tilde{y}_3\ll B(\nu)} d\tilde{y}_2\,d\tilde{y}_3\,
\norm{\nu}_{k,\infty} B(\nu)^{2k} / \abs{\tilde{y}_1}^{2+k}
\ll_k \norm{\nu}_{k,\infty} B(\nu)^{4+3k}$.
\end{proof}

\begin{proposition}
\label{PROP:real-local-probabilistic-calculation}
Let $X\in \RR_{>0}$.
The ``pure $L^2$ moment''
$\int_{a\in\RR}d\tilde{a}\,\sigma_{\infty,a,\nu}(X)^2$
and the ``mixed $L^1$ moment''
$\int_{\tilde{\bm{z}}\in\RR^3}d\tilde{\bm{z}}\,\nu(\tilde{\bm{z}})\sigma_{\infty,F_0(\bm{z}),\nu}(X)$
both equal $\sigma_{\infty,\nu^{\otimes 2}}$ (defined via \eqref{EQN:define-local-densities-sigma_v}).
\end{proposition}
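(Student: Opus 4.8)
The plan is to recognize both moments as the ``expected value'' of a suitable indicator against a product measure, and to reduce each to a single surface integral that matches the definition of $\sigma_{\infty,\nu^{\otimes 2}}$ in \eqref{EQN:define-F-real-density-w}. Recall from \eqref{EQN:surface-integral-representation-of-real-density} that, after choosing coordinates in which some $|y_j|$ is bounded below on $\Supp\nu$, the density $\sigma_{\infty,a,\nu}(X)$ equals a clean surface integral over the variety $\{F_0(\tilde{\bm y})=\tilde a\}$. I would also unwind \eqref{EQN:define-F-real-density-w} in the same way: since $F(\bm x)=F_0(\bm y)-F_0(\bm z)$, an $\eps$-thickening of $\{F=0\}$ followed by a change of variables (say in $\tilde y_1$, using $|y_1|\geq\delta$ on $\Supp\nu$) expresses $\sigma_{\infty,\nu^{\otimes 2}}$ as $\int_{\RR^5} \nu(\tilde{\bm y})\nu(\tilde{\bm z})\,|\partial F_0/\partial \tilde y_1|^{-1}$ over the hypersurface $F_0(\tilde{\bm y})=F_0(\tilde{\bm z})$; equivalently, by the coarea/layer-cake identity, as $\int_{\RR} d\tilde a\,\bigl(\int_{F_0(\tilde{\bm y})=\tilde a}\cdots\bigr)\bigl(\int_{F_0(\tilde{\bm z})=\tilde a}\cdots\bigr)$.

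For the \textbf{pure $L^2$ moment}, the computation is then essentially immediate: $\int_{a\in\RR} d\tilde a\,\sigma_{\infty,a,\nu}(X)^2 = \int d\tilde a\,\bigl(\int_{F_0(\tilde{\bm y})=\tilde a}\nu(\tilde{\bm y})|\partial F_0/\partial\tilde y_1|^{-1}\bigr)^2$, which is exactly the ``sliced'' form of $\sigma_{\infty,\nu^{\otimes 2}}$ just described. One just has to check that Fubini is legitimate (everything is nonnegative once $\nu\geq 0$, or otherwise absolutely convergent because $\nu$ is $C^\infty_c$ and, on the relevant range, $|\partial F_0/\partial\tilde y_1|$ is bounded below), and that the ``$\eps$-thickening'' defining $\sigma_{\infty,\nu^{\otimes 2}}$ commutes with the $\tilde a$-integration — this is the standard interchange used throughout circle-method local density manipulations, and can be cited in the style of \cite{heath1996new}*{proof of Lemma~11}.

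For the \textbf{mixed $L^1$ moment}, I would write $\int_{\tilde{\bm z}} d\tilde{\bm z}\,\nu(\tilde{\bm z})\,\sigma_{\infty,F_0(\bm z),\nu}(X)$ and insert $\sigma_{\infty,F_0(\bm z),\nu}(X)=\int_{\RR} d\tilde a\,\delta(\tilde a - F_0(\tilde{\bm z}))\,\sigma_{\infty,a,\nu}(X)$ — more precisely, push the $\eps$-thickening in the $\bm z$-variables through: by \eqref{EQN:define-F_a-real-density} applied in reverse, $\int_{\tilde{\bm z}}\nu(\tilde{\bm z})\,g(F_0(\tilde{\bm z}))\,d\tilde{\bm z} = \int_\RR d\tilde a\,g(\tilde a)\,\sigma_{\infty,a,\nu}(X)$ for continuous $g$ (this is just the coarea formula / the defining property of the layer density, using $|z_1|\geq\delta$ on $\Supp\nu$ to guarantee smoothness of the fibering). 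Taking $g(\tilde a)=\sigma_{\infty,a,\nu}(X)$ — which is continuous in $\tilde a$, indeed $C^\infty$ by Proposition~\ref{PROP:uniform-bound-on-derivatives-of-densities} — turns the mixed $L^1$ moment into $\int_\RR d\tilde a\,\sigma_{\infty,a,\nu}(X)^2$, i.e.\ the pure $L^2$ moment, which by the previous paragraph equals $\sigma_{\infty,\nu^{\otimes 2}}$.

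The \textbf{main obstacle} is purely a matter of justifying the interchanges of limits: the various $\sigma$'s are defined as $\eps\to 0$ limits of thickened integrals, and one must verify that these limits can be moved inside the $\tilde a$- or $\tilde{\bm z}$-integrals and that the resulting iterated integrals agree with $\sigma_{\infty,\nu^{\otimes 2}}$. With $\nu\in C^\infty_c$ and the lower bound on $|\partial F_0/\partial\tilde y_1|$ over the (compact) support — which one secures by the same partition-of-unity device as in the proof of Proposition~\ref{PROP:uniform-bound-on-derivatives-of-densities}, reducing to the case where some coordinate is bounded away from $0$ — everything is uniformly bounded with compact support in $\tilde a$, so dominated convergence and Fubini apply without difficulty. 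I expect the write-up to be short: set up the surface-integral form of all three quantities, invoke the coarea identity twice, and check absolute convergence.
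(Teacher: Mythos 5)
Your proposal is correct and follows essentially the same route as the paper's own proof: expand $\sigma_{\infty,\nu^{\otimes 2}}$ and the two moments via the surface-integral representation \eqref{EQN:surface-integral-representation-of-real-density} (a coarea computation), verify equality in the case where some coordinate is bounded away from zero on $\Supp\nu$, and defer the general case to a partition of unity together with a bilinear generalization. The one place you genuinely reorganize the argument is the mixed $L^1$ moment: the paper expands it directly into a five-dimensional surface integral over $\{F_0(\tilde{\bm y})=F_0(\tilde{\bm z})\}$ and recognizes it as $\sigma_{\infty,\nu^{\otimes 2}}$, whereas you use the layer-cake identity $\int d\tilde{\bm z}\,\nu(\tilde{\bm z})\,g(F_0(\tilde{\bm z}))=\int d\tilde a\,g(\tilde a)\,\sigma_{\infty,\tilde a,\nu}(1)$ with $g=\sigma_{\infty,\cdot,\nu}(1)$ to reduce the mixed moment to the pure $L^2$ moment, which you have already identified with $\sigma_{\infty,\nu^{\otimes 2}}$. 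This is a slightly slicker presentation (one surface computation instead of two), and it correctly invokes the continuity of $\tilde a\mapsto\sigma_{\infty,a,\nu}(X)$ from Proposition~\ref{PROP:uniform-bound-on-derivatives-of-densities} to justify the interchange. One small caveat worth making explicit in a write-up: for the partition-of-unity reduction to go through in the square $\sigma_{\infty,a,\nu}(X)^2=\sum_{\bm t,\bm t'}\sigma_{\infty,a,\nu_{\bm t}}(X)\,\sigma_{\infty,a,\nu_{\bm t'}}(X)$, you really need the bilinear statement (two possibly different weights, each with its own good coordinate direction), exactly as the paper's sketch says; your phrase ``reducing to the case where some coordinate is bounded away from $0$'' glosses over the fact that the two factors may use different coordinates.
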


\begin{proof}
First,
$\int_{a\in\RR}d\tilde{a}\,\sigma_{\infty,a,\nu}(X)^2$ expands (via \eqref{EQN:surface-integral-representation-of-real-density} and the relation $\tilde{a}=F_0(\tilde{\bm{y}})$) to
\begin{equation*}
\int_{\RR^4} d\tilde{y}_2\,d\tilde{y}_3\,d\tilde{z}_2\,d\tilde{z}_3
\int_{\tilde{y}_1\in\RR}dF_0(\tilde{\bm{y}})\,
\frac{\nu(\tilde{\bm{y}})\nu(\tilde{\bm{z}})}{\partial_{\tilde{y}_1}F_0\vert_{\tilde{\bm{y}}}}
(\partial_{\tilde{z}_1}F_0\vert_{\tilde{\bm{z}}})^{-1}\vert_{F_0(\tilde{\bm{y}})=F_0(\tilde{\bm{z}})},
\end{equation*}
which simplifies to
$\int_{\RR^5} d\tilde{y}_2\,d\tilde{y}_3\,d\tilde{z}_2\,d\tilde{z}_3\,d\tilde{y}_1\,
\nu^{\otimes 2}(\tilde{\bm{x}})\cdot(\partial_{\tilde{z}_1}F_0\vert_{\tilde{\bm{z}}})^{-1}\vert_{F(\tilde{\bm{x}})=0}$ (by \eqref{EQN:define-symmetric-separable-w-weight-from-nu}),
which equals $\sigma_{\infty,\nu^{\otimes 2}}$
by \cite{chambert2010igusa}*{\S5.4, par.~4}.
Second,
$F_0(\bm{z})/X^3 = F_0(\tilde{\bm{z}})$,
so by \eqref{EQN:surface-integral-representation-of-real-density},
\begin{equation*}
\int_{\tilde{\bm{z}}\in\RR^3}d\tilde{\bm{z}}\,\nu(\tilde{\bm{z}})\sigma_{\infty,F_0(\bm{z}),\nu}(X)
= \int_{\RR^3\times\RR^2}d\tilde{\bm{z}}\,d\tilde{y}_2\,d\tilde{y}_3\,\nu(\tilde{\bm{y}})\nu(\tilde{\bm{z}})
\cdot(\partial_{\tilde{y}_1}F_0\vert_{\tilde{\bm{y}}})^{-1}\vert_{F_0(\tilde{\bm{y}})=F_0(\tilde{\bm{z}})},
\end{equation*}
which again simplifies to $\sigma_{\infty,\nu^{\otimes 2}}$.
\end{proof}


\begin{proposition}
\label{PROP:replace-dense-sum-with-integral}
Let $X,N\geq 1$ be integers.
Let $b\in \ZZ/N\ZZ$ and $\bm{e}\in (\ZZ/N\ZZ)^3$.
Then for integers $j\geq 4$, we have (for some constants $A_2,\dots,A_5\in [1, 30]$)
\begin{align}
\sum_{a\in \ZZ:\, a\equiv b\bmod{N}} \sigma_{\infty,a,\nu}(X)^2
&= \frac{X^3 \sigma_{\infty,\nu^{\otimes 2}}}{N}
+ \frac{O_j(\norm{\nu}_{j,\infty}^2 B(\nu)^{A_2+A_3j})}{(X^3/N)^{j-1}},
\label{EQN:first-Poisson-sum-goal} \\
\sum_{\bm{z}\in \ZZ^3:\, \bm{z}\equiv \bm{e}\bmod{N}} \nu(\bm{z}/X)
\sigma_{\infty,F_0(\bm{z}),\nu}(X)
&= \frac{X^3 \sigma_{\infty,\nu^{\otimes 2}}}{N^3}
+ \frac{O_j(\norm{\nu}_{j,\infty}^2 B(\nu)^{A_4+A_5j})}{(X/N)^{j-3}},
\label{EQN:second-Poisson-sum-goal} \\
\sum_{\bm{z}\in \ZZ^3:\, \bm{z}\equiv \bm{e}\bmod{N}} \nu(\bm{z}/X)^2
&= \frac{X^3 \norm{\nu}_{L^2(\RR^3)}^2}{N^3}
+ \frac{O_j(\norm{\nu}_{j,\infty}^2 B(\nu)^3)}{(X/N)^{j-3}}.
\label{EQN:third-Poisson-sum-goal}
\end{align}
\end{proposition}

\begin{proof}
We want to replace certain sums with integrals.
This is loosely analogous to \cite{diaconu2019admissible}*{proof of Lemma~3.1},
but we can get better error terms using the smoothness of $\nu$.
The error will depend polynomially on the size and support of $\nu$.
We do not optimize the exponents of $B(\nu)$ above, since they are ultimately not so important.

The method is standard.
Poisson summation,
together with Proposition~\ref{PROP:real-local-probabilistic-calculation},
gives
\begin{equation*}
\sum_{a\equiv b\bmod{N}} \sigma_{\infty,a,\nu}(X)^2
= \frac{X^3 \sigma_{\infty,\nu^{\otimes 2}}}{N}
+ \sum_{c\neq 0} \frac{O(1)}{N} \left\lvert{
\int_{a\in \RR} da\,\sigma_{\infty,a,\nu}(X)^2 e(-c\cdot a/N)
}\right\rvert.
\end{equation*}
We plug in $a = X^3\tilde{a}$,
integrate by parts $j\ge 2$ times in $\tilde{a}$,
and invoke Proposition~\ref{PROP:uniform-bound-on-derivatives-of-densities},
to get
\begin{equation*}
\frac{1}{N} \left\lvert{
\int_{a\in \RR} da\,\sigma_{\infty,a,\nu}(X)^2 e(-c\cdot a/N)
}\right\rvert
\ll_j \frac{\norm{\nu}_{j,\infty}^2 B(\nu)^{A_2+A_3j}}{\abs{c}^j (X^3/N)^{j-1}}
\quad (A_2 = 3+2A_0,\; A_3 = A_1).
\end{equation*}
(Note that $\sigma_{\infty,a,\nu}(X)=0$ unless $\tilde{a}\ll B(\nu)^3$.)
The estimate \eqref{EQN:first-Poisson-sum-goal} follows.

The estimate \eqref{EQN:second-Poisson-sum-goal} similar.
Let $\bm{c}\in \ZZ^3\setminus \set{\bm{0}}$.
Choose $i\in \set{1,2,3}$ with $\abs{c_i} = \norm{\bm{c}}$.
Integrating by parts $j\ge 4$ times in $\tilde{z}_i = z_i/X$ gives
(via Proposition~\ref{PROP:uniform-bound-on-derivatives-of-densities} with $a=F_0(\bm{z})$)
\begin{equation*}
\frac{1}{N^3} \left\lvert{
\int_{\bm{z}\in \RR^3} d\bm{z}\,\nu(\bm{z}/X)
\sigma_{\infty,F_0(\bm{z}),\nu}(X) e(-\bm{c}\cdot \bm{z}/N)
}\right\rvert
\ll_j \frac{\norm{\nu}_{j,\infty}^2 B(\nu)^{A_4+A_5j}}{\norm{\bm{c}}^j (X/N)^{j-3}},
\end{equation*}
with $A_4 = 3+A_0$, $A_5 = A_1+2$; this is because a factor of $\partial_{\tilde{z}_i} F_0(\tilde{\bm{z}}) = 3\tilde{z}_i^2 \ll B(\nu)^2$ appears each time we differentiate $\sigma_{\infty,F_0(\bm{z}),\nu}(X)$, by the chain rule.
(Note that $\nu(\bm{z}/X)=0$ unless $\tilde{\bm{z}}\ll B(\nu)$.)
Poisson summation and Proposition~\ref{PROP:real-local-probabilistic-calculation} then give \eqref{EQN:second-Poisson-sum-goal}.

Finally, \eqref{EQN:third-Poisson-sum-goal} is similar to \eqref{EQN:second-Poisson-sum-goal}, but easier; integrating by parts in $\tilde{z}_i = z_i/X$ gives
\begin{equation*}
\frac{1}{N^3} \left\lvert{
\int_{\bm{z}\in \RR^3} d\bm{z}\,\nu(\bm{z}/X)^2 e(-\bm{c}\cdot \bm{z}/N)
}\right\rvert
\ll_j \frac{\norm{\nu}_{j,\infty}^2 B(\nu)^3}{\norm{\bm{c}}^j (X/N)^{j-3}},
\end{equation*}
for $j\ge 4$.
Poisson summation and \eqref{EQN:define-L2-norm-of-nu} then give \eqref{EQN:third-Poisson-sum-goal}.
\end{proof}

\subsection{Final calculations}

We are finally ready to establish the main results of \S\ref{SEC:general-variance-setup-and-estimation}, which concerns the variance defined in \eqref{EQN:define-approximate-variance}.

\begin{theorem}
\label{THM:unconditional-variance-evalulation-over-integers}
Let $\nu\in C^\infty_c(\RR^3)$ satisfy \eqref{EQN:condition-for-clean}.
Let $X,K,d\geq 1$ be integers with $Kd\leq X^{9/10}$.
Then
\begin{equation*}
\map{Var}(X,K;d) =
[N_{\nu^{\otimes 2}}(X;d) - \mf{S}(d) \cdot \sigma_{\infty,\nu^{\otimes 2}} X^3]
+ O_\eps(d^{5/3} K^{-2/3+\eps} X^3 \norm{\nu}_{100,\infty}^2 B(\nu)^{5000}).
\end{equation*}
\end{theorem}

\begin{proof}
Squaring out \eqref{EQN:define-approximate-variance} yields $\map{Var}(X,K;d) = \Sigma_1 - 2\Sigma_2 + \Sigma_3$, where
\begin{equation*}
\Sigma_1\defeq \sum_{a\in d\ZZ} N_{a,\nu}(X)^2, \quad
\Sigma_2\defeq \sum_{a\in d\ZZ} N_{a,\nu}(X)s_a(K)\sigma_{\infty,a,\nu}(X), \quad
\Sigma_3\defeq \sum_{a\in d\ZZ} [s_a(K)\sigma_{\infty,a,\nu}(X)]^2.
\end{equation*}
Plugging in \eqref{EQN:define-integral-point-count-N_a,nu(X)} gives $\Sigma_1 = N_{\nu^{\otimes 2}}(X;d)$ (by \eqref{EQN:define-fiber-product-form-F-from-F_0}, \eqref{EQN:define-integral-point-count-N_w(X;d)}, and \eqref{EQN:define-symmetric-separable-w-weight-from-nu}).
Next, write
\begin{equation*}
\Sigma_3
= \sum_{n_1,n_2\leq K} \sum_{b\in d\ZZ/n_1n_2d\ZZ} (n_1n_2)^{-3}T_b(n_1)T_b(n_2)
\sum_{a\in \ZZ:\, a\equiv b\bmod{n_1n_2d}} \sigma_{\infty,a,\nu}(X)^2
\end{equation*}
by plugging in \eqref{EQN:define-s_a(K)} and then grouping terms by $a\bmod{n_1n_2d}$;
and write
\begin{equation*}
\begin{split}
\Sigma_2
&= \sum_{\bm{z}\in \ZZ^3:\, F_0(\bm{z})\equiv 0\bmod{d}} \nu(\bm{z}/X)s_{F_0(\bm{z})}(K)\sigma_{\infty,F_0(\bm{z}),\nu}(X) \\
&= \sum_{n\leq K} \sum_{\bm{e}\in (\ZZ/nd\ZZ)^3:\, F_0(\bm{e})\equiv 0\bmod{d}} n^{-3}T_{F_0(\bm{e})}(n)
\sum_{\bm{z}\in \ZZ^3:\, \bm{z}\equiv \bm{e}\bmod{nd}} \nu(\bm{z}/X)\sigma_{\infty,F_0(\bm{z}),\nu}(X)
\end{split}
\end{equation*}
by expanding $N_{a,\nu}(X)$, plugging in \eqref{EQN:define-s_a(K)}, and grouping terms by $\bm{z}\bmod{nd}$.
Then by Proposition~\ref{PROP:replace-dense-sum-with-integral} and the trivial bound $\abs{T_b(n)}\leq n^4$, we have (for $j\geq 4$)
\begin{equation*}
\Sigma_3
- \sum_{n_1,n_2\leq K} \sum_{b\in d\ZZ/n_1n_2d\ZZ} (n_1n_2)^{-3}T_b(n_1)T_b(n_2)
\cdot \frac{X^3\sigma_{\infty,\nu^{\otimes 2}}}{n_1n_2d}
\ll K^6 \cdot \frac{O_j(\norm{\nu}_{j,\infty}^2 B(\nu)^{A_2+A_3j})}{(X^3/K^2d)^{j-1}}
\end{equation*}
by \eqref{EQN:first-Poisson-sum-goal} (with $N=n_1n_2d$), and
\begin{equation*}
\Sigma_2
- \sum_{n\leq K} \sum_{\bm{e}\in (\ZZ/nd\ZZ)^3:\, F_0(\bm{e})\equiv 0\bmod{d}} n^{-3}T_{F_0(\bm{e})}(n)
\cdot \frac{X^3\sigma_{\infty,\nu^{\otimes 2}}}{(nd)^3}
\ll K^2(Kd)^3 \cdot \frac{O_j(\norm{\nu}_{j,\infty}^2 B(\nu)^{A_4+A_5j})}{(X/Kd)^{j-3}}
\end{equation*}
by \eqref{EQN:second-Poisson-sum-goal} (with $N=nd$), where $A_2,\dots,A_5\le 30$.
Upon taking $j=100$ above, and plugging in \eqref{EQN:pure-T_b-L^2-moment} and \eqref{EQN:mixed-T_b-L^1-moment} from Proposition~\ref{PROP:truncated-non-archimedean-local-probabilistic-calculation}, we find that $\map{Var}(X,K;d)$ equals
\begin{equation*}
[N_{\nu^{\otimes 2}}(X;d) - \mf{S}(d) \cdot \sigma_{\infty,\nu^{\otimes 2}} X^3]
+ O_\eps(d^{5/3} K^{-2/3+\eps}\cdot \sigma_{\infty,\nu^{\otimes 2}} X^3)
+ O(\norm{\nu}_{100,\infty}^2 B(\nu)^{5000}),
\end{equation*}
since $X,K,d,B(\nu)\geq 1$ and $Kd\leq X^{9/10}$ (and thus $K^6$, $K^2d$, $K^2(Kd)^3$ are at most $X^6$, $X^2$, $X^5$, respectively).
To finish, note that $\sigma_{\infty,\nu^{\otimes 2}} \ll B(\nu)^3\cdot \norm{\nu}_{0,\infty}^2 B(\nu)^{2A_0}$ by Propositions~\ref{PROP:real-local-probabilistic-calculation} and~\ref{PROP:uniform-bound-on-derivatives-of-densities},
and that $\norm{\nu}_{100,\infty}^2 B(\nu)^{5000} \le d^{5/3} K^{-2/3+\eps} X^3 \norm{\nu}_{100,\infty}^2 B(\nu)^{5000}$ trivially.
\end{proof}

Call a function $\nu\maps \RR^3\to \RR$ \emph{very clean} if
\begin{equation}
\label{EQN:condition-for-very-clean}
(\Supp{\nu}) \cap \set{\bm{y}\in \RR^3: y_1y_2y_3(y_1+y_2)(y_1+y_3)(y_2+y_3) = 0} = \emptyset,
\end{equation}
and \emph{$\map{LinAut}(F_0)$-symmetric} if
\begin{equation}
\label{EQN:condition-for-symmetric}
\nu(y_1,y_2,y_3) = \nu(y_{\sigma(1)},y_{\sigma(2)},y_{\sigma(3)})
\quad\textnormal{for all }\sigma\in S_3.
\end{equation}

\begin{theorem}
\label{THM:conditional-variance-evalulation-over-primes}
Let $\xi\in \set{0,1}$.
Fix $\delta\in (0, 9/10)$ and an integer $k\geq 5000$, and assume \eqref{EQN:hard-HLH-clean-weight-level-d} for $\xi$.
Let $\nu\in C^\infty_c(\RR^3)$ satisfy \eqref{EQN:condition-for-very-clean} and \eqref{EQN:condition-for-symmetric}.
Let $X,K\geq 2$ be integers with $K\leq X^{9/10-\delta}$.
Fix $\hbar \in (0, 9\delta/20]$,
and let $P$ be the product of all primes $p<X^{\xi\hbar}$.
Then
\begin{equation*}
\sum_{a\in \ZZ:\, \gcd(a,P) = 1}[N_{a,\nu}(X) - s_a(K)\sigma_{\infty,a,\nu}(X)]^2
\ll \frac{X^3 \norm{\nu}_{L^2(\RR^3)}^2}{(\log{X})^\xi} + \frac{X^3 \norm{\nu}_{k,\infty}^2 B(\nu)^k}{\min(X^{\delta/11}, K^{2/3}X^{-3\delta})}.
\end{equation*}
\end{theorem}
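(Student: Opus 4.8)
The plan is to insert a Selberg upper-bound sieve, reducing the sum to a weighted sum of the variances $\map{Var}(X,K;d)$ with $d$ small, and then apply Theorem~\ref{THM:unconditional-variance-evalulation-over-integers}. Fix Selberg's weights $(\lambda_e)$: real, supported on squarefree $e\mid P$ with $e\le X^{\xi\hbar}$, normalized by $\lambda_1=1$, tuned for sifting $\{F_0(\bm y)\}_{\bm y}$ (with smooth weight $\nu(\bm y/X)^2$) by the primes $p<X^{\xi\hbar}$; recall $\abs{\lambda_e}\le1$. Since $\bm{1}_{\gcd(a,P)=1}\le(\sum_{e\mid\gcd(a,P)}\lambda_e)^2$ and the summand is nonnegative, expanding the square and grouping the inner sum by $a\bmod\lcm(e_1,e_2)$ gives
\[
\sum_{\gcd(a,P)=1}[N_{a,\nu}(X)-s_a(K)\sigma_{\infty,a,\nu}(X)]^2\ \le\ \sum_{e_1,e_2}\lambda_{e_1}\lambda_{e_2}\,\map{Var}\bigl(X,K;\lcm(e_1,e_2)\bigr).
\]
Every $d:=\lcm(e_1,e_2)$ here satisfies $d\le X^{2\xi\hbar}\le X^{9\xi\delta/10}$ (as $\hbar\le 9\delta/20$), so $d\le X^{\xi\delta}$ and $Kd\le X^{9/10}$ (as $K\le X^{9/10-\delta}$); thus Theorem~\ref{THM:unconditional-variance-evalulation-over-integers} applies to each $\map{Var}(X,K;d)$, and \eqref{EQN:hard-HLH-clean-weight-level-d} applies with $w=\nu^{\otimes 2}$, which is clean because $\nu$ is clean (a consequence of \eqref{EQN:condition-for-very-clean}). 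Here also $\norm{\nu^{\otimes 2}}_{k,\infty}\le\norm{\nu}_{k,\infty}^2$, $B(\nu^{\otimes 2})=B(\nu)$, and $\sigma_{\infty,\nu^{\otimes 2}}\ll\norm{\nu}_{k,\infty}^2 B(\nu)^k$ (the last by Propositions~\ref{PROP:uniform-bound-on-derivatives-of-densities} and~\ref{PROP:real-local-probabilistic-calculation}, using $k\ge 5000$).

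The geometric heart is the evaluation of the special-point sum $\Sigma^{\mathrm{sp}}(d):=\sum_{\text{special }\bm x}\nu^{\otimes 2}(\bm x/X)\bm{1}_{d\mid F_0(\bm y),F_0(\bm z)}$, which by \eqref{EQN:define-HLH-error-E_w(X;d)} satisfies $N_{\nu^{\otimes 2}}(X;d)-\mf S(d)\sigma_{\infty,\nu^{\otimes 2}}X^3=E_{\nu^{\otimes 2}}(X;d)+\Sigma^{\mathrm{sp}}(d)$, so that Theorem~\ref{THM:unconditional-variance-evalulation-over-integers} reads $\map{Var}(X,K;d)=E_{\nu^{\otimes 2}}(X;d)+\Sigma^{\mathrm{sp}}(d)+O_\eps(d^{5/3}K^{-2/3+\eps}\sigma_{\infty,\nu^{\otimes 2}}X^3)+O(\norm{\nu}_{100,\infty}^2 B(\nu)^{5000})$. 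Now $\Upsilon$ consists of the six graphs $L_\sigma=\{(\bm y,\bm z):\bm z=\sigma\bm y\}$ ($\sigma\in S_3$) together with further subspaces for which $I(L)\cap\QQ[\bm y]=(y_i+y_j)$ with $i\ne j$; on such an $L$, every point has $y_i+y_j=0$, where $\nu(\bm y/X)=0$ since $\nu$ is very clean, so only the $L_\sigma$ contribute. On $L_\sigma$ one has $F_0(\bm z)=F_0(\bm y)$ and, by \eqref{EQN:condition-for-symmetric}, $\nu(\bm y/X)\nu(\sigma\bm y/X)=\nu(\bm y/X)^2$, and the six sheets overlap only on the planar loci $\{y_i=y_j\}$; hence
\[
\Sigma^{\mathrm{sp}}(d)=6\sum_{\bm y\in\ZZ^3:\,d\mid F_0(\bm y)}\nu(\bm y/X)^2+O\bigl((XB(\nu))^2\norm{\nu}_\infty^2\bigr)\qquad\text{uniformly in }d.
\]

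Plugging this in and summing over $e_1,e_2$, the leading part $6\sum_{e_1,e_2}\lambda_{e_1}\lambda_{e_2}\sum_{\lcm(e_1,e_2)\mid F_0(\bm y)}\nu(\bm y/X)^2=6\sum_{\bm y}\nu(\bm y/X)^2\bigl(\sum_{e\mid\gcd(F_0(\bm y),P)}\lambda_e\bigr)^2$ is precisely the nonnegative quadratic form Selberg's weights minimize; since the sifting density $g(p)=p^{-3}\#\{\bm y\bmod p:p\mid F_0(\bm y)\}$ equals $1/p$ for $p=3$ and $p\equiv 2\bmod 3$ and equals $1/p+O(p^{-3/2})$ for $p\equiv1\bmod3$ (Hasse bound for $y_1^3+y_2^3+y_3^3=0$), this is a one-dimensional sieve and the standard estimate bounds it by $\ll\frac{\sum_{\bm y}\nu(\bm y/X)^2}{(\log X)^\xi}+(\text{remainder})\ll\frac{X^3\norm{\nu}_{L^2(\RR^3)}^2}{(\log X)^\xi}$ (when $\xi=0$, $P=1$ and this part is just $\ll X^3\norm{\nu}_{L^2(\RR^3)}^2$), the Selberg remainder coming from Poisson errors $\ll_j\norm{\nu}_{j,\infty}^2 B(\nu)^3 X^3(e/X)^j$ in progressions of modulus $e\le X^{9\xi\delta/10}$, negligible for $j=k$. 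The remaining pieces I would bound crudely via $\abs{\lambda_e}\le1$, $\#\{(e_1,e_2):e_i\le X^{\xi\hbar}\}\le X^{2\xi\hbar}$, $\lcm(e_1,e_2)\le e_1e_2$, and $\sum_{e\le X^{\xi\hbar}}e^{5/3}\ll X^{8\xi\hbar/3}$: the inequalities $2\hbar\le\tfrac{9}{10}\delta$ and $\tfrac{16}{3}\hbar\le\tfrac{12}{5}\delta<3\delta$ (both from $\hbar\le\tfrac{9}{20}\delta$) make the $E_{\nu^{\otimes 2}}$-contribution $\ll X^{2\xi\hbar}\norm{\nu}_{k,\infty}^2 B(\nu)^k X^{3-\delta}\ll X^{3-\delta/11}\norm{\nu}_{k,\infty}^2 B(\nu)^k$, the $d^{5/3}K^{-2/3+\eps}$-contribution $\ll X^{16\xi\hbar/3}K^{-2/3+\eps}\sigma_{\infty,\nu^{\otimes 2}}X^3\ll K^{-2/3}X^{3\delta}X^3\norm{\nu}_{k,\infty}^2 B(\nu)^k$, and the two leftover $O(\cdot)$-terms each $\ll X^{3-\delta/11}\norm{\nu}_{k,\infty}^2 B(\nu)^k$ (using $2\xi\hbar<1$, $k\ge5000$). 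As $\min(X^{\delta/11},K^{2/3}X^{-3\delta})^{-1}=\max(X^{-\delta/11},K^{-2/3}X^{3\delta})$, collecting these bounds gives the asserted estimate.

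The main obstacle is the geometric step --- determining $\Upsilon$ precisely and verifying that the factors $y_i+y_j$ in \eqref{EQN:condition-for-very-clean} annihilate every $L\in\Upsilon$ except the six $S_3$-graphs, so that $\Sigma^{\mathrm{sp}}$ acquires the clean shape above and reassembles into a nonnegative Selberg form. This is precisely why ``very clean'', not merely ``clean'', is imposed: for clean $\nu$ the other members of $\Upsilon$ would leave additional, not readily controllable contributions to $\Sigma^{\mathrm{sp}}(d)$. Everything else is routine bookkeeping: propagating Sobolev norms and $B(\nu)$-powers through $\nu\mapsto\nu^{\otimes 2}$, checking the exponent inequalities that $\hbar\le 9\delta/20$ is designed to ensure, and confirming the Poisson and Selberg remainders are negligible.
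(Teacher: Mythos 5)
Your proposal is correct and follows essentially the same route as the paper: insert a Selberg upper-bound sieve, reduce to $\map{Var}(X,K;d)$ for small $d$, evaluate each $\map{Var}(X,K;d)$ via Theorem~\ref{THM:unconditional-variance-evalulation-over-integers} together with \eqref{EQN:hard-HLH-clean-weight-level-d} and \eqref{EQN:define-HLH-error-E_w(X;d)}, use \eqref{EQN:condition-for-very-clean} and \eqref{EQN:condition-for-symmetric} to collapse the special-point sum to $3!\sum_{d\mid F_0(\bm{y})}\nu(\bm{y}/X)^2$, Poisson-sum this, and then run a one-dimensional Selberg sieve against the multiplicative density $g(p)=p^{-1}+O(p^{-3/2})$. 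The only cosmetic difference is that you unpack the Selberg quadratic form $\sum_{e_1,e_2}\lambda_{e_1}\lambda_{e_2}\map{Var}(X,K;\lcm(e_1,e_2))$ explicitly (and spell out the structure of $\Upsilon$ as the $15$ perfect matchings of the Fermat variables), whereas the paper invokes the packaged version in Iwaniec--Kowalski (Theorem 6.4, (6.80), \S6.6); both are the same argument.
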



\begin{proof}
We apply \eqref{EQN:hard-HLH-clean-weight-level-d} with $w = \nu^{\otimes 2}$.
Note that $\norm{\nu^{\otimes 2}}_{k,\infty} \leq \norm{\nu}_{k,\infty}^2$ by \eqref{EQN:define-Sobolev-norm},
and $B(\nu^{\otimes 2})\leq B(\nu)$ since $\Supp(\nu^{\otimes 2}) \belongs (\Supp \nu)^2$.
Now let $d\leq X^{\xi\delta}$ (so that $Kd\le X^{9/10}$).
By Theorem~\ref{THM:unconditional-variance-evalulation-over-integers}, \eqref{EQN:hard-HLH-clean-weight-level-d}, and \eqref{EQN:define-HLH-error-E_w(X;d)}, we conclude that (since $k\ge 5000$, and $d^{5/3} K^\eps \le X^{2\delta}$ for $\eps = \delta/3$, say)
\begin{equation*}
\map{Var}(X,K;d)
= \sum_{\textnormal{special }\bm{x}\in \ZZ^6:\, d\mid F_0(\bm{y}),F_0(\bm{z})} \nu^{\otimes 2}(\bm{x}/X)
+ O(X^{3-\delta} + X^{3+2\delta} K^{-2/3})\cdot \norm{\nu}_{k,\infty}^2 B(\nu)^k.
\end{equation*}
By \eqref{EQN:condition-for-very-clean} and \eqref{COND:x-is-in-the-union-of-special-linear-subspaces}, any special $\bm{x}\in \ZZ^6$ with $\nu^{\otimes 2}(\bm{x}/X)\ne 0$ must satisfy $(y_1,y_2,y_3) = (z_{\sigma(1)},z_{\sigma(2)},z_{\sigma(3)})$ for some $\sigma\in S_3$,
since $\prod_{1\le i<j\le 3} (y_i+y_j)(z_i+z_j) \ne 0$.
So, by \eqref{EQN:condition-for-symmetric},
\begin{equation}
\label{EQN:linear-space-inclusion-exclusion}
\sum_{\textnormal{special }\bm{x}\in \ZZ^6:\, d\mid F_0(\bm{y}),F_0(\bm{z})} \nu^{\otimes 2}(\bm{x}/X)
= 3! \sum_{\bm{y}\in \ZZ^3:\, d\mid F_0(\bm{y})} \nu(\bm{y}/X)^2
+ O(\norm{\nu}_{0,\infty}^2\cdot [B(\nu) X]^2).
\end{equation}
But $\nu\in C^\infty_c(\RR^3)$, so by \eqref{EQN:third-Poisson-sum-goal} (with $N=d$),
\begin{equation}
\label{EQN:special-solution-Poisson-summation}
\sum_{\bm{y}\in \ZZ^3:\, d\mid F_0(\bm{y})} \nu(\bm{y}/X)^2
= \sum_{\bm{e}\in (\ZZ/d\ZZ)^3:\, F_0(\bm{e})\equiv 0\bmod{d}}
\left( \frac{X^3 \norm{\nu}_{L^2(\RR^3)}^2}{d^3}
+ \frac{O_k(\norm{\nu}_{k,\infty}^2 B(\nu)^3)}{(X/d)^{k-3}} \right),
\end{equation}
since $k\geq 4$.
For integers $n\geq 1$, let
\begin{equation*}
    g(n)\defeq n^{-3} \cdot \card{\set{\bm{e}\in (\ZZ/n\ZZ)^3: F_0(\bm{e})\equiv 0\bmod{n}}}.
\end{equation*}
Then our work above (in the the last several displays) implies
\begin{equation}
\label{EQN:final-conditional-estimate-for-variance-restricted-to-dZ}
\map{Var}(X,K;d) = 3! g(d) X^3 \norm{\nu}_{L^2(\RR^3)}^2 + r_d,
\end{equation}
where $r_d \ll (X^{3-\delta} + X^{3+2\delta} K^{-2/3} + X^2 + d^3/(X/d)^{k-3})\cdot \norm{\nu}_{k,\infty}^2 B(\nu)^k$.
Since $d\leq X^{9/10}$ and $k\geq 100$, we in fact have
\begin{equation}
\label{INEQ:clean-variance-remainder-bound}
r_d\ll (X^{3-\delta} + X^{3+2\delta} K^{-2/3})\cdot \norm{\nu}_{k,\infty}^2 B(\nu)^k.
\end{equation}

But $g$ is multiplicative in $n$, and we have $g(n)\in (0,1)$ for all $n\geq 2$ (because, for instance, $F_0(\bm{0}) = 0$ and $F_0(1,0,0) = 1$).
And by definition, $P = \prod_{p<X^{\xi\hbar}} p$.
Directly if $\xi=0$,
and by the Selberg sieve \cite{iwaniec2004analytic}*{Theorem~6.4 and (6.80)} if $\xi=1$,
we conclude from \eqref{EQN:final-conditional-estimate-for-variance-restricted-to-dZ} that
\begin{equation}
\sum_{a\in \ZZ:\, \gcd(a,P) = 1}[N_{a,\nu}(X) - s_a(K)\sigma_{\infty,a,\nu}(X)]^2
\leq \frac{3! X^3 \norm{\nu}_{L^2(\RR^3)}^2}{H^\xi} + R(P),
\end{equation}
where
\begin{equation*}
\begin{split}
    H &\defeq \sum_{d<X^\hbar} \mu(d)^2 \prod_{p\mid d} \frac{g(p)}{1-g(p)}
    \geq 1, \\
    R(P) &\ll_\eps \sum_{d\leq X^{2\xi\hbar}} d^\eps \abs{r_d}
    \ll X^\eps (X^{3-\delta/10} + X^{3+2.9\delta} K^{-2/3}) \cdot \norm{\nu}_{k,\infty}^2 B(\nu)^k.
\end{split}
\end{equation*}
(We use \eqref{INEQ:clean-variance-remainder-bound} to bound $r_d$.)
The Hasse bound for elliptic curves gives $g(p) = p^{-1} + O(p^{-3/2})$;
so $\sum_{p\leq x} g(p) \log{p} = \log{x} + O(1)$ (for $x\geq 1$) and $\sum_{p>1} g(p)^2 \log{p} < \infty$.
By \cite{iwaniec2004analytic}*{\S6.6, derivation of (6.85) using Theorem~1.1}, then, $H \asymp \log(X^\hbar)$.
Theorem~\ref{THM:conditional-variance-evalulation-over-primes} follows.
\end{proof}

\section{Statistics of truncated singular series}
\label{SEC:statistics-of-s_a(K)}


For integers $K\geq 1$, recall the definition of $s_a(K)$ from \eqref{EQN:define-s_a(K)}.
In this section, we will prove that $s_a(K)$ is typically sizable (see Theorem~\ref{THM:s_a(K)-is-typically-sizable} below).
The proof makes use of an ``approximate inverse'' of $s_a(K) = \sum_{n\leq K} n^{-3} T_a(n)$.
There is some flexibility here; we let
\begin{equation}
\label{EQN:define-M_a(K)}
    M_a(K)\defeq \sum_{n\leq K:\, \gcd(n,30) = 1} \mu(n) n^{-3} T_a(n).
\end{equation}
%
%
The strategy for Theorem~\ref{THM:s_a(K)-is-typically-sizable} is to show that $s_a(K)M_a(K)$ is typically sizable, and that $M_a(K)$ is typically bounded.
For analytic convenience, let $T^\natural_a(n)\defeq n^{-2} T_a(n)$.

For integers $a$ and primes $p\nmid 3a$, Propositions~\ref{PROP:mod-p-bounds-on-T_a} and~\ref{PROP:T_a(p^l)-support-bound} below imply $T^\natural_a(p)\ll 1$ and $T_a(p^{\geq 2}) = 0$.
Indeed, if $a\neq 0$, then $s_a(K)$ resembles the value of an $L$-function at $1$,
and in this sense Theorem~\ref{THM:s_a(K)-is-typically-sizable} is related to work such as \cite{granville2003distribution}.
But there are also significant issues with this comparison, because $\abs{T^\natural_a(n)}$ can be as large as $n$ or so.
(Trivially $\abs{T^\natural_a(n)}\leq n^2$.)
The most serious issues arise when $a$ has a large cube divisor.

For an integer $n\geq 1$,
let $\map{sq}(n)\defeq \prod_{p^2\mid n} p^{v_p(n)}$ denote the \emph{square-full part} of $n$,
and $\map{cub}(n)\defeq \prod_{p^3\mid n} p^{v_p(n)}$ the \emph{cube-full part} of $n$.
Let $\mcal{S}(D)$ be the set of nonzero integers $a$ with $\map{sq}(\abs{a})\leq D$.
Let $\mcal{C}(D)$ be the set of nonzero integers $a$ with $\map{cub}(\abs{a})\leq D$.
We call an integer $n\geq 1$ \emph{square-full} if $n=\map{sq}(n)$,
and \emph{cube-full} if $n=\map{cub}(n)$.

We now collect some basic facts about $T_a(n)$.

\begin{lemma}
\label{LEM:T_a-bound-for-nearly-cube-free-n}
Let $a,n\in \ZZ$ with $n\geq 1$.
Let $n_3\defeq \map{cub}(n)$.
Then $\abs{T^\natural_a(n)}\leq O(1)^{\omega(n)} (n_3n)^{1/2}$.
\end{lemma}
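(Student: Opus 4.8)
The plan is to reduce everything to prime powers via multiplicativity, and then to exploit standard exponential-sum bounds for the cubic Weyl-type sums in three variables at each prime power, distinguishing the "tame" range $v_p(n)\le 2$ from the "wild" range $v_p(n)\ge 3$. First I would invoke Proposition~\ref{PROP:standard-multiplicativity-properties-of-T_a,S^+_0}, which gives $T_a(n_1n_2)=T_a(n_1)T_a(n_2)$ for coprime $n_1,n_2$, hence $T^\natural_a(n_1n_2)=T^\natural_a(n_1)T^\natural_a(n_2)$ as well. Writing $n=\prod_{p\mid n} p^{v_p(n)}$, it therefore suffices to prove, for each prime power $q=p^v$, a bound of the shape $\abs{T^\natural_a(p^v)}\le C\,(\,\map{cub}(p^v)\cdot p^v\,)^{1/2}$ with an absolute constant $C$; multiplying these over the $\omega(n)$ distinct primes dividing $n$ yields $\abs{T^\natural_a(n)}\le C^{\omega(n)}(n_3 n)^{1/2}$, which is exactly the claimed $O(1)^{\omega(n)}(n_3n)^{1/2}$ after absorbing $\map{cub}$ multiplicativity into $n_3=\map{cub}(n)=\prod_{p^3\mid n}p^{v_p(n)}$.

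Next I would treat the prime-power case $q=p^v$. For $v=1$: this is precisely the content the paper flags via Propositions~\ref{PROP:mod-p-bounds-on-T_a} and~\ref{PROP:T_a(p^l)-support-bound} (for $p\nmid 3a$ one has $T^\natural_a(p)\ll 1$), and for $p\mid 3a$ one still has the trivial bound $\abs{T^\natural_a(p)}\le p$, which is $\le (\map{cub}(p)\cdot p)^{1/2}=(1\cdot p)^{1/2}\cdot p^{1/2}$ — wait, $\map{cub}(p)=1$, so the target for $v=1$ is just $O(1)\cdot p^{1/2}$, and the trivial bound $\abs{T^\natural_a(p)}\le p$ is \emph{not} enough when $p\mid 3a$. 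So for $v=1$ I would instead use the genuine cubic exponential sum estimate: $\sum_{\bm y\bmod p}e_p(uF_a(\bm y))$ factors as a product of three one-variable Gauss-type sums $\sum_{y\bmod p}e_p(uy^3)$ (up to the additive $a$-shift, which only contributes a unit-modulus scalar), each of size $O(p^{1/2})$ by Weil; summing $\abs{\cdot}$ over $u\in(\ZZ/p)^\times$ and dividing by $p^2$ gives $\abs{T^\natural_a(p)}=O(p^{-2}\cdot p\cdot p^{3/2})=O(p^{1/2})$, as required — and this holds for \emph{all} $p$, removing the $p\mid 3a$ issue. For $v=2$: by Proposition~\ref{PROP:T_a(p^l)-support-bound} the sum $T_a(p^2)$ is supported on $u$ in a thin set (or vanishes outside a congruence class pattern), and a standard stationary-phase / Hensel-lifting analysis of $\sum_{\bm y\bmod p^2}e_{p^2}(uF_a(\bm y))$ gives $\abs{T^\natural_a(p^2)}=O(p)$, which matches the target $O(1)\cdot(\map{cub}(p^2)\cdot p^2)^{1/2}=O(1)\cdot(1\cdot p^2)^{1/2}=O(p)$. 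For $v\ge 3$: here I just use the trivial bound $\abs{T_a(p^v)}\le \phi(p^v)\cdot p^{3v}\le p^{4v}$, hence $\abs{T^\natural_a(p^v)}\le p^{2v}=(p^v\cdot p^v)^{1/2}\le (\map{cub}(p^v)\cdot p^v)^{1/2}$ since $\map{cub}(p^v)=p^v$ when $v\ge 3$. This exactly uses the definition of $\map{cub}$ to "pay for" the wild primes with no savings, while the tame primes $v\le 2$ contribute only $p^{v/2}$, i.e. square-root cancellation.

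The main obstacle is the $v=2$ (and to a lesser extent the $v=1$ with $p\mid 3a$) exponential-sum estimate: one needs the $O(p)$ bound for $\abs{T^\natural_a(p^2)}$ uniformly in $a$, including when $p\mid 3a$ so that $F_a$ becomes degenerate modulo $p$. I would handle this by the usual $p$-adic stratification: write $\bm y=\bm y_0+p\bm y_1$ with $\bm y_0\bmod p$, $\bm y_1\bmod p$, expand $F_a(\bm y_0+p\bm y_1)\equiv F_a(\bm y_0)+p\,\nabla F_0(\bm y_0)\cdot\bm y_1\pmod{p^2}$; summing over $\bm y_1\bmod p$ forces $p\mid u\,\nabla F_0(\bm y_0)$, i.e. $p\mid 3uy_{0,i}^2$ for each $i$, hence (as $p\nmid u$, and assuming $p\ne 3$) $p\mid y_{0,i}$ for all $i$, collapsing the $\bm y_0$-sum to a single point and reducing $T_a(p^2)$ to a sum over $u$ of $p^3\cdot e_{p^2}(uF_a(\bm 0))=p^3 e_{p^2}(-ua)$, which is $p^3$ times a Ramanujan-type sum of size $O(p)$; for $p=3$ one checks the small cases directly. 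Since $v=1,2$ are the only genuinely nontrivial cases and they are entirely standard, once these local bounds are in hand the lemma follows by multiplying the $\omega(n)$ prime-power estimates together.
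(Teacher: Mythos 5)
There is a genuine gap in the treatment of prime powers $p^v$ with $v\ge 3$. You write that the trivial bound gives $\abs{T_a(p^v)}\le p^{4v}$, hence $\abs{T^\natural_a(p^v)}\le p^{2v}$, and then assert $p^{2v}=(p^v\cdot p^v)^{1/2}\le(\map{cub}(p^v)\cdot p^v)^{1/2}$. But $(p^v\cdot p^v)^{1/2}=p^v$, not $p^{2v}$. Since $\map{cub}(p^v)=p^v$ for $v\ge 3$, the lemma actually demands $\abs{T^\natural_a(p^v)}\ll p^v$, i.e.\ a full square-root saving over the trivial bound $p^{2v}$. The trivial bound alone is therefore off by a factor of $p^v$ and cannot close the case $v\ge 3$.

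To repair this one needs a genuinely nontrivial local estimate in the cube-full range. The paper gets it by factoring the inner sum over $\bm{y}$ into three one-variable cubic Gauss sums $S(p^l,u)=\sum_{y\bmod p^l}e_{p^l}(uy^3)$ and invoking \cite{vaughan1997hardy}*{(4.24) and Lemma~4.7}, which give roughly $\abs{S(p^l,u)}\ll p^{2l/3}$ (cube-root cancellation, uniform in $u$ coprime to $p$). Cubing this and summing over $u\in(\ZZ/p^l\ZZ)^\times$ yields $\abs{T_a(p^l)}\ll p^{3l}$, i.e.\ $\abs{T^\natural_a(p^l)}\ll p^l$, which is exactly the target. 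Your Weil-bound argument for $v=1$ and your Hensel/stationary-phase argument for $v=2$ are fine in spirit (the "Ramanujan sum of size $O(p)$" remark should read that $c_{p^2}(a)$ can be as large as $p^2$, but after dividing by $p^4$ the normalized sum is still $O(p)$), and those are the same in substance as what the Vaughan bound gives for $l\in\set{1,2}$; the real missing ingredient is the $l\ge 3$ estimate, which is nontrivial and is precisely why the paper reaches for Vaughan's lemma rather than the trivial bound.
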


\begin{proof}
By Proposition~\ref{PROP:standard-multiplicativity-properties-of-T_a,S^+_0}, it suffices to prove the lemma when $n=p^l$ for some prime $p$ and integer $l\geq 1$.
But by \eqref{EQN:define-T_a(m)} and \cite{vaughan1997hardy}*{(4.24) and Lemma~4.7}, we have
\begin{equation*}
    p^{3\floor{(l-1)/3}}\cdot p^{-3l}T_a(p^l)
    \ll p^{l-3/2}\bm{1}_{3\mid l-1} + p^{l-3}\bm{1}_{3\nmid l-1}.
\end{equation*}
Thus $T^\natural_a(p^l)
\ll p^{l-1/2}\bm{1}_{3\mid l-1}
+ p^{l-1}\bm{1}_{3\mid l-2}
+ p^{l}\bm{1}_{3\mid l}
\ll p^{l/2}\cdot \bm{1}_{l\in \set{1,2}} + p^l\cdot \bm{1}_{l\geq 3}$.
(There should be more robust proofs, but since $F_0$ is diagonal, it is convenient to call on \cite{vaughan1997hardy}.)
\end{proof}

For the proofs of the next two results, let $N_a(m)$ denote the number of solutions $\bm{y}\in (\ZZ/m\ZZ)^3$ to $F_0(\bm{y}) \equiv a\bmod{m}$.

\begin{proposition}
\label{PROP:mod-p-bounds-on-T_a}
Let $a\in \ZZ$.
Let $p$ be a prime.
Then the following hold:
\begin{enumerate}
    \item If $p\nmid 3a$, then $\abs{T^\natural_a(p)} \leq 6 + 2p^{-1/2}$.
    \item If $p\mid 3a$, then $\abs{T^\natural_a(p)} \leq 6p^{1/2}$.
    \item If $p\geq 7$, then $\abs{T^\natural_a(p)} < 0.99p$.
\end{enumerate}
\end{proposition}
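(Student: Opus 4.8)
The plan is to reduce $T^\natural_a(p)=p^{-2}T_a(p)$ to the point count $N_a(p)$ and then evaluate $N_a(p)$ by cubic Gauss sums. By orthogonality of additive characters, $\sum_{u\bmod p}\sum_{\bm{y}\in(\ZZ/p\ZZ)^3}e_p(uF_a(\bm{y}))=pN_a(p)$, so subtracting the $u=0$ term gives $T_a(p)=pN_a(p)-p^3$, i.e.
\[
T^\natural_a(p)=\frac{N_a(p)-p^2}{p}.
\]
If $p\equiv 2\bmod 3$, then $y\mapsto y^3$ is a bijection on $\ZZ/p\ZZ$, so $N_a(p)=p^2$ and $T^\natural_a(p)=0$; this disposes of all three parts in that case. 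The trivial bound $\abs{T^\natural_a(p)}\le(p-1)p$ disposes of part~(2) when $p=3$ (there $6\le 6\cdot 3^{1/2}$). So I would now assume $p\equiv 1\bmod 3$ and fix a cubic Dirichlet character $\chi$ mod $p$, so that $\bar\chi=\chi^2$, $\chi(-1)=1$ (being both a square and a cube root of unity), and the Gauss sums $g(\chi^j)\defeq\sum_t\chi^j(t)e_p(t)$ satisfy $\abs{g(\chi)}=\abs{g(\chi^2)}=p^{1/2}$ and $g(\chi)g(\chi^2)=\chi(-1)p=p$.

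Next I would expand $N_a(p)=p^{-1}\sum_{t\bmod p}e_p(-ta)\bigl(\sum_y e_p(ty^3)\bigr)^3$; the $t=0$ term contributes $p^2$, and for $t\not\equiv 0$ the standard identity $\#\{y:y^3=z\}=\sum_{j=0}^2\chi^j(z)$ (for $z\ne 0$) gives $\sum_y e_p(ty^3)=\bar\chi(t)g(\chi)+\bar\chi^2(t)g(\chi^2)$. Cubing this bracket, summing over $t\ne 0$, and evaluating $\sum_{t\ne 0}\bar\chi^j(t)e_p(-ta)$ via $\sum_t\psi(t)e_p(ct)=\bar\psi(c)g(\psi)$ (for $\psi$ nontrivial), one finds, after using $g(\chi)^2g(\chi^2)^2=p^2$, that
\[
T^\natural_a(p)=3\bigl(\chi(a)+\bar\chi(a)\bigr)-p^{-2}\bigl(g(\chi)^3+g(\chi^2)^3\bigr)\qquad(p\nmid a),
\]
whereas for $p\mid a$ only the diagonal term survives (by orthogonality of the nontrivial $\bar\chi^j$) and $T^\natural_a(p)=p^{-2}(p-1)\bigl(g(\chi)^3+g(\chi^2)^3\bigr)$.

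The three bounds then follow. Since $\chi(a)+\bar\chi(a)$ equals $2$ when $a$ is a nonzero cube mod $p$ and $\omega+\omega^2=-1$ otherwise, $\abs{3(\chi(a)+\bar\chi(a))}\le 6$; and $\abs{g(\chi)^3+g(\chi^2)^3}\le 2p^{3/2}$, so the $p\nmid a$ formula yields part~(1): $\abs{T^\natural_a(p)}\le 6+2p^{-1/2}$. The $p\mid a$ formula gives $\abs{T^\natural_a(p)}\le 2(p-1)p^{-1/2}<2p^{1/2}\le 6p^{1/2}$, which together with the $p\equiv 2\bmod 3$ and $p=3$ cases above gives part~(2). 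For part~(3) with $p\ge 7$: the case $p\equiv 2\bmod 3$ is $0$; the case $p\equiv 1\bmod 3$ with $p\mid a$ has $\abs{T^\natural_a(p)}<2p^{1/2}<0.99p$; and the case $p\equiv 1\bmod 3$ with $p\nmid a$ has $\abs{T^\natural_a(p)}\le 6+2p^{-1/2}<0.99p$, the tightest instance being $p=7$, where $6+2\cdot 7^{-1/2}=6.756\ldots<6.93=0.99\cdot 7$, with ample room already at $p=13$.

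I do not expect a serious obstacle. The one genuinely tight point is the numerical check at $p=7$ in part~(3), which is exactly what forces the constant $0.99$; the remainder is the routine (if slightly fiddly) bookkeeping of which power of $\chi$ and which Gauss sum attaches to each term after cubing $\bar\chi(t)g(\chi)+\bar\chi^2(t)g(\chi^2)$. One could instead obtain the $p\mid a$ estimates from the Hasse bound for the Fermat cubic $y_1^3+y_2^3+y_3^3=0$ in $\PP^2$ (used elsewhere in the paper), but the Gauss-sum evaluation is needed in any case for $p\nmid a$.
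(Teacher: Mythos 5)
Your proof is correct, and it takes a genuinely different route from the paper's. The paper's argument appeals to the Weil conjectures: when $p\nmid 3a$ it counts points on the smooth projective cubic surface $F_0(\bm{y})=aw^3$ in $\PP^3$ (getting $p^2+p+1+\lambda_1$ with $\abs{\lambda_1}\le 6p$ from $b_2=7$) minus the curve at infinity (the smooth plane Fermat cubic, with the Hasse bound $\abs{\lambda_2}\le 2\sqrt p$); when $p\mid a$, $p\ne 3$, it reduces $T_a(p)=T_0(p)$ to the point count on the affine cone over that same plane curve. You instead evaluate $T_a(p)$ explicitly via cubic Gauss sums: after the clean dispatch of $p\equiv 2\bmod 3$ (cubing is a bijection, so $T_a(p)=0$) and $p=3$ (trivial bound), you expand $\sum_y e_p(ty^3)=\bar\chi(t)g(\chi)+\bar\chi^2(t)g(\chi^2)$, cube, and sum against $e_p(-ta)$ using $g(\chi)g(\chi^2)=p$ and $\abs{g(\chi^j)}=p^{1/2}$, arriving at exact formulas $T^\natural_a(p)=3(\chi(a)+\bar\chi(a))-p^{-2}(g(\chi)^3+g(\chi^2)^3)$ for $p\nmid a$ and $T^\natural_a(p)=p^{-2}(p-1)(g(\chi)^3+g(\chi^2)^3)$ for $p\mid a$. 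I checked your Gauss-sum bookkeeping (the cross terms $3A^2B+3AB^2$ give $3p\bar\chi(t)g(\chi)+3p\bar\chi^2(t)g(\chi^2)$, and the twisted character sums turn into $3p^2(\chi(a)+\bar\chi(a))$); it is right, and the resulting numerical bounds $6+2p^{-1/2}$ and $2(p-1)p^{-1/2}$ are identical to the paper's. The trade-off: the paper's cohomological argument is robust to replacing $F_0$ by any smooth ternary cubic (consistent with the paper's stated goal of generalizing to arbitrary nondegenerate $G_0$), whereas the Gauss-sum route is specific to diagonal forms; in exchange your method is more self-contained (only the elementary fact $\abs{g(\chi)}=\sqrt p$ is needed, rather than the Weil bound for cubic surfaces) and produces exact identities, which could be useful for sharper statistics of $s_a(K)$. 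Your observation that $p=7$ is the tight case in part (3) is also exactly right and worth making.
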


\begin{proof}
By \eqref{EQN:define-T_a(m)}, we have
\begin{equation}
\label{EQN:T_a(p)-via-point-counts}
    T_a(p) = p N_a(p) - p^3.
\end{equation}

\emph{Case~1: $p\nmid 3a$.}
Then the equation $F_0(\bm{y}) = aw^3$ cuts out a smooth cubic hypersurface in $\PP^3$ over $\FF_p$.
The ``curve at infinity'' (cut out by $w=0$), namely $F_0(\bm{y}) = 0$ in $\PP^2$, is also smooth.
So by the Weil conjectures, $N_a(p) = (p^2+p+1 + \lambda_1) - (p+1 - \lambda_2)$, where $\lambda_1=\lambda_1(a;p)$ and $\lambda_2=\lambda_2(p)$ are integers with $\abs{\lambda_1}\leq 6p$ and $\abs{\lambda_2}\leq 2\sqrt{p}$.
Thus $\abs{T_a(p)}\leq 6p^2+2p^{3/2}$, and $\abs{T^\natural_a(p)}\leq 6+2p^{-1/2}$.
So (1)--(3) hold (with (2) being vacuous).

\emph{Case~2: $p=3$.}
Then $\abs{T^\natural_a(p)}\leq p^2$ trivially.
(In fact, $T_a(p) = 0$, but we do not need this special fact.)
So (1)--(3) hold (with (1) and (3) being vacuous).

\emph{Case~2: $p\mid 3a$ and $p\neq 3$.}
Then $p\mid a$.
So $T_a(p) = T_0(p)$.
But $F_0(\bm{y}) = 0$ in $\Aff^3$ is an affine cone over a curve.
This curve, $F_0(\bm{y}) = 0$ in $\PP^2$, is smooth.
So by the Weil conjectures, $N_0(p) = 1 + (p-1)(p+1-\lambda_2(p))$, where $\abs{\lambda_2(p)}\leq 2\sqrt{p}$.
Thus $\abs{T_a(p)}\leq 2p(p-1)\sqrt{p}$, and $\abs{T^\natural_a(p)}\leq 2(p^{1/2}-p^{-1/2})$.
So (1)--(3) hold (with (1) being vacuous).
\end{proof}

We now strengthen the vanishing result in \cite{vaughan1997hardy}*{Lemma~4.7}.

\begin{proposition}
\label{PROP:T_a(p^l)-support-bound}
Let $p$ be a prime.
Let $a,l\in \ZZ$ with $l\geq 2$.
If $p^{l-1}\nmid 3a$, then $T_a(p^l)=0$.
\end{proposition}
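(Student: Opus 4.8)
The plan is to prove the vanishing by a direct exponential-sum computation, exploiting the diagonal structure of $F_0$. Write $T_a(p^l) = \sum_{u\in(\ZZ/p^l\ZZ)^\times}\sum_{\bm{y}\in(\ZZ/p^l\ZZ)^3} e_{p^l}(u(y_1^3+y_2^3+y_3^3-a))$. Separating the three cube sums, this becomes $\sum_{u} e_{p^l}(-ua)\,\prod_{i=1}^3 S(u;p^l)$, where $S(u;p^l)\defeq\sum_{y\in\ZZ/p^l\ZZ} e_{p^l}(uy^3)$ is a one-variable cubic Gauss-type sum. So $T_a(p^l) = \sum_{u\in(\ZZ/p^l\ZZ)^\times} e_{p^l}(-ua)\,S(u;p^l)^3$. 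The point is that these $S(u;p^l)$ have a very restricted structure for $l\geq 2$; indeed this is exactly the content of \cite{vaughan1997hardy}*{Lemma~4.7}, which the proposition is strengthening.

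The key step is to understand the $u$-dependence precisely. Write $u = p^j v$ with $0\le j\le l-1$ and $\gcd(v,p)=1$ (here $j<l$ since $u$ is a unit times $p^j$ with $j<l$, because $p^l\nmid u$). For $p\ne 3$, a standard evaluation (e.g.\ via the substitution $y\mapsto y+p^{l-j-?}t$ and completing, or directly from \cite{vaughan1997hardy}) shows that $S(p^jv;p^l) = p^{j}\cdot S(v;p^{l-j})$ up to a harmless factor, and that $S(v;p^m)$ for $m\geq 2$ and $\gcd(v,p)=1$ is itself $p\cdot(\text{something involving }S(\cdot;p^{m-?}))$, the upshot being a clean formula for $|S(u;p^l)|$ and, more importantly, that $S(u;p^l)^3$ as a function of $u$ is supported on, and depends only mildly on, the residue $u\bmod p^{\lceil l/3\rceil}$ or similar. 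Then the outer sum $\sum_u e_{p^l}(-ua)\,S(u;p^l)^3$ factors through a sum $\sum_{u\bmod p^{m}} e_{p^l}(-ua)(\cdots)$ for some $m<l$, and completing the remaining geometric sum over the unseen digits of $u$ forces the whole thing to vanish unless $p^{l-1}\mid a$ --- equivalently, unless $p^{l-1}\mid 3a$ (the factor $3$ being irrelevant when $p\ne 3$). For $p=3$ one argues separately, tracking the extra ramification coming from $3\mid 3a$, which is precisely why the hypothesis is stated as $p^{l-1}\nmid 3a$ rather than $p^{l-1}\nmid a$; here one uses that the cube map on $\ZZ/3^l\ZZ$ has its own structure and that $S(u;3^l)$ picks up the extra factor of $3$.

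Concretely, the cleanest route is: reduce to prime powers via \cite{vaughan1997hardy}, quote from there the structural fact that for $l\ge 2$ one has $T_a(p^l) = p^{3}\cdot T_{a'}(p^{l-3})$-type recursions together with base-case vanishing (their (4.24) and Lemma~4.7), and then observe that iterating the recursion down reveals that a nonzero value requires, at each descent step, a congruence condition on $a$, which accumulates to $p^{l-1}\mid 3a$. The main obstacle I expect is bookkeeping at $p=3$: the cube map is not étale there, so the local analysis of $S(u;3^l)$ is genuinely different, and one must check that the extra factor of $3$ in the hypothesis exactly compensates for the loss, with no off-by-one error in the exponent. A secondary, more cosmetic obstacle is making sure the reduction to prime powers in the first line is applied correctly given that $a$ is being reduced modulo $p^l$ rather than globally. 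Once those are handled, the vanishing is immediate from the completed geometric sum.
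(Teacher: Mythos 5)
Your primary route --- factoring $T_a(p^l) = \sum_{u\in(\ZZ/p^l\ZZ)^\times}e_{p^l}(-ua)\,S(u;p^l)^3$ with $S(u;p^l)\defeq\sum_{y\bmod p^l}e_{p^l}(uy^3)$, pinning down how $S(u;p^l)$ depends on $u$, and completing the sum over the remaining digits of $u$ --- is genuinely different from the paper's, which instead writes $T_a(p^l)$ in terms of point counts $N_a(p^v)$, invokes Hensel's lemma so the non-singular solutions cancel, and argues directly about the singular locus $p\mid\grad F_0(\bm y)$. Both are viable; yours is arguably more self-contained since it never leaves the world of one-variable sums. But two things in your sketch need repair.

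First, writing $u=p^jv$ is vacuous: $u$ ranges over $(\ZZ/p^l\ZZ)^\times$, so it is always a unit and $j=0$; there is no descent in $j$. Second --- and this is the substantive gap --- the claimed rigidity, that $S(u;p^l)^3$ depends on $u$ only through $u\bmod p^{\lceil l/3\rceil}$ ``or similar'', is far too coarse. If $S(u;p^l)^3$ depended only on $u\bmod p^m$, then splitting $u=u_0+p^ms$ and summing the geometric series over $s\in\ZZ/p^{l-m}\ZZ$ forces only $p^{l-m}\mid a$, which with $m=\lceil l/3\rceil$ is nowhere near $p^{l-1}\mid a$ once $l$ is large. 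You need $m\le 1$ for $p\ne3$ and $m\le 2$ for $p=3$, and these are what is actually true: for $p\ne3$, the substitution $y\mapsto y+p^{l-1}t$ annihilates the contribution of $p\nmid y$ to $S(u;p^l)$ for every $l\ge2$ (since $\sum_t e_p(3uy^2t)=0$ when $p\nmid 3uy$), while the contribution of $p\mid y$ gives $S(u;p^l)=p^2S(u;p^{l-3})$ for $l\ge3$ and $S(u;p^2)=p$; hence $S(u;p^l)$ depends on $u$ only through $u\bmod p$, and completing the sum gives $p^{l-1}\mid a=p^{l-1}\mid 3a$. For $p=3$, since $\grad F_0$ is always divisible by $3$, one uses the step-$2$ substitution $y\mapsto y+3^{l-2}t$ to get $S(u;3^l)=9\,S(u;3^{l-3})$ for $l\ge3$, so $S(u;3^l)$ depends on $u$ only through $u\bmod 9$; completing then forces $3^{l-2}\mid a$, i.e.\ $3^{l-1}\mid 3a$, and the case $p=3$, $l=2$ is vacuous since $3\mid 3a$ always. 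Finally, your fallback ``cleanest route'' --- quoting from Vaughan a recursion $T_a(p^l)=p^3T_{a'}(p^{l-3})$ --- does not quite exist as stated: Vaughan's (4.24) and Lemma~4.7 concern the one-variable sums $S(q,a)$, not $T_a$. The paper derives the analogous recursion $T_a(p^l)=p^9T_{a/p^3}(p^{l-3})$ (valid when $p^3\mid a$ and $p\ne3$) itself, via Hensel lifting, and that derivation is essentially what your first route reproves in exponential-sum language.
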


\begin{proof}
[Proof when $p\neq 3$]
By \eqref{EQN:define-T_a(m)}, we have
\begin{equation}
\label{EQN:T_a(p^l)-via-point-counts}
    T_a(p^l) = p^l N_a(p^l) - p^{l-1} p^3 N_a(p^{l-1}).
\end{equation}
Let $N^0_a(p^v)$ (for $v\geq 1$) denote the number of solutions $\bm{y}\in (\ZZ/p^v\ZZ)^3$ to $F_0(\bm{y}) \equiv a\bmod{p^v}$ with $p\mid \grad{F_0}(\bm{y})$.
(Here $\grad{F_0}(\bm{y}) = (3y_1^2,3y_2^2,3y_3^2)$.)
Then by \eqref{EQN:T_a(p^l)-via-point-counts} and Hensel's lemma,
\begin{equation}
\label{EQN:Hensel-simplify-T_a(p^l)-point-counts-general}
    T_a(p^l) = p^l N^0_a(p^l) - p^{l-1} p^3 N^0_a(p^{l-1}).
\end{equation}

\emph{Case~1: $p^3\nmid a$.}
Suppose $p^{l-1}\nmid 3a$.
Then $p^{\min(3, l-1)}\nmid a$,
so all solutions $\bm{y}\bmod{p^{l-1}}$ to $F_0(\bm{y})\equiv a\bmod{p^{l-1}}$ must be \emph{primitive} (i.e.~satisfy $p\nmid \bm{y}$), and thus satisfy $p\nmid \grad{F_0}(\bm{y})$.
Thus $N^0_a(p^v)=0$ for $v\geq l-1$, whence $T_a(p^l) = 0$ by \eqref{EQN:Hensel-simplify-T_a(p^l)-point-counts-general}.

\emph{Case~2: $p^3\mid a$.}
Suppose $p^{l-1}\nmid 3a$.
Then $l\geq 5$.
But $p\mid \grad{F_0}(\bm{y}) \Rightarrow p\mid \bm{y}$,
so $N^0_a(p^v) = p^6 N_{a/p^3}(p^{v-3})$ for $v\geq 4$,
whence $T_a(p^l) = p^9 T_{a/p^3}(p^{l-3})$ by \eqref{EQN:Hensel-simplify-T_a(p^l)-point-counts-general}.
Noting that $l-3\geq 2$ and $p^{l-4}\nmid 3a/p^3$, a recursive argument now yields $T_a(p^l)=0$.
\end{proof}

\begin{proof}
[Proof when $p=3$]
Both \eqref{EQN:T_a(p^l)-via-point-counts} and \eqref{EQN:Hensel-simplify-T_a(p^l)-point-counts-general} hold here, but \eqref{EQN:Hensel-simplify-T_a(p^l)-point-counts-general} is no longer simpler than \eqref{EQN:T_a(p^l)-via-point-counts}.
However, let $N^1_a(p^v)$ (for $v\geq 2$) denote the number of solutions $\bm{y}\in (\ZZ/p^v\ZZ)^3$ to $F_0(\bm{y}) \equiv a\bmod{p^v}$ with $p^2\mid \grad{F_0}(\bm{y})$.
A standard Hensel argument (based on writing $\bm{y} = \bm{y}_0 + p^{l-2}\bm{z}$, rather than $\bm{y} = \bm{y}_0 + p^{l-1}\bm{z}$) shows that if $l\geq 3$, then
\begin{equation}
\label{EQN:Hensel-simplify-T_a(3^l)-point-counts}
T_a(p^l) = p^l N^1_a(p^l) - p^{l-1} p^3 N^1_a(p^{l-1}).
\end{equation}

\emph{Case~1: $p^3\nmid a$.}
Suppose $p^{l-1}\nmid 3a$.
Then $l\geq 3$, and $p^{\min(3, l-2)}\nmid a$.
So all solutions $\bm{y}\bmod{p^{l-1}}$ to $F_0(\bm{y})\equiv a\bmod{p^{l-1}}$ are \emph{primitive}, and thus (since $l-1\geq 2$) satisfy $p^2\nmid \grad{F_0}(\bm{y})$.
Thus $N^1_a(p^v)=0$ for $v\geq l-1$, whence $T_a(p^l) = 0$ by \eqref{EQN:Hensel-simplify-T_a(3^l)-point-counts}.

\emph{Case~2: $p^3\mid a$.}
Suppose $p^{l-1}\nmid 3a$.
Then $l\geq 6$.
But $p^2\mid \grad{F_0}(\bm{y}) \Rightarrow p\mid \bm{y}$,
so $N^1_a(p^v) = p^6 N_{a/p^3}(p^{v-3})$ for $v\geq 5$,
whence $T_a(p^l) = p^9 T_{a/p^3}(p^{l-3})$ by \eqref{EQN:Hensel-simplify-T_a(3^l)-point-counts}.
Noting that $l-3\geq 3$ and $p^{l-4}\nmid 3a/p^3$,
a recursive argument yields $T_a(p^l)=0$.
\end{proof}

Now recall \eqref{EQN:define-s_a(K)} and \eqref{EQN:define-M_a(K)}.
For every $a\in \ZZ$, we have
\begin{equation}
\label{EQN:s_a(K)M_a(K)-expand}
    s_a(K) M_a(K)
    = \sum_{n\leq K} c_a(n)
    + \sum_{\substack{n_1,n_2\leq K: \\ n_1n_2>K,\; \gcd(n_2,30)=1}} n_1^{-3}T_a(n_1)\cdot \mu(n_2)n_2^{-3}T_a(n_2),
\end{equation}
where for integers $n\geq 1$ we let
\begin{equation*}
    c_a(n)\defeq n^{-3} \sum_{n_1n_2 = n:\, \gcd(n_2,30)=1} T_a(n_1)\cdot \mu(n_2) T_a(n_2).
\end{equation*}
By Proposition~\ref{PROP:standard-multiplicativity-properties-of-T_a,S^+_0}, $c_a(n)$ is a Dirichlet convolution of two multiplicative functions, so $c_a(n)$ is multiplicative in $n$.
For any prime $p$ and integer $l\geq 1$, we have
\begin{equation}
\label{EQN:c_a(p^l)-expand}
    c_a(p^l) = p^{-3l}[T_a(p^l) - T_a(p^{l-1})T_a(p)\bm{1}_{p\geq 7}]
    = p^{-l}[T^\natural_a(p^l) - T^\natural_a(p^{l-1})T^\natural_a(p)\bm{1}_{p\geq 7}].
\end{equation}


We now bound $c_a(n)$ using our work above (for convenience;
\cite{vaughan1997hardy}*{Lemma~4.7}, a ``stratification'' result for $p^{-l}T^\natural_a(p^l)$ based on $\gcd(p^l,a)$, might also suffice).

\begin{lemma}
\label{LEM:general-bound-on-c_a(n)}
Let $a,n\in \ZZ$ with $a\neq 0$ and $n\geq 1$.
Let $a_2\defeq \map{sq}(\abs{a})$ and $n_3\defeq \map{cub}(n)$.
Let $n_2\defeq \map{sq}(n/n_3)$ and $n_1\defeq n/n_2n_3$.
Then $n_2$ is a square, and
\begin{equation}
\label{INEQ:technical-multiplicative-form-of-general-bound-on-c_a(n)}
    \abs{c_a(n)} \le O(1)^{\omega(n)} n_1^{-2}
    \cdot n_2^{-1} \gcd(n_2^{1/2}, 3a)
    \cdot n_3^{-1/2} \prod_{p\mid n_3:\, v_p(n_3)\leq 2v_p(9a_2)} p^{v_p(n_3)/2}.
\end{equation}
\end{lemma}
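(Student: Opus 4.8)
The plan is to exploit the multiplicativity of $c_a(n)$ established just above the statement, so that it suffices to prove the local bound at each prime power $p^l \mid\mid n$, and then multiply. Concretely, since $c_a(n)$ is multiplicative in $n$ and since $n_1, n_2, n_3$ and the product over $p\mid n_3$ are each multiplicative (or "localized") in the obvious way, I would reduce \eqref{INEQ:technical-multiplicative-form-of-general-bound-on-c_a(n)} to showing, for each prime $p$ and each $l\geq 1$, a bound of the shape
\begin{equation*}
\abs{c_a(p^l)} \le C \cdot
\begin{cases}
p^{-2l} & \text{if } l=1 \text{ and } p\nmid 3a \text{ (or } p=1\ldots)\\
\cdots
\end{cases}
\end{equation*}
— more precisely, matching the three regimes in the statement: the "$n_1$" regime ($p\mid n_1$, i.e. $p\| n$ up to cube-free-ish part, contributing $p^{-2v_p(n)}$), the "$n_2$" regime ($p^2\mid\mid$ or square part, contributing $p^{-v_p(n)}\gcd(p^{v_p(n)/2},3a)$), and the "$n_3$" regime ($p^3\mid n$, contributing $p^{-v_p(n)/2}$ or $p^{-v_p(n)/2}\cdot p^{v_p(n)/2}=1$ according to whether $v_p(n_3)\le 2v_p(9a_2)$). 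The $O(1)^{\omega(n)}$ factor absorbs the product of the local constants $C$.

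The local analysis proceeds by expanding $c_a(p^l)$ via \eqref{EQN:c_a(p^l)-expand} and inserting the bounds on $T^\natural_a(p^j)$ from Lemma~\ref{LEM:T_a-bound-for-nearly-cube-free-n}, Proposition~\ref{PROP:mod-p-bounds-on-T_a}, and Proposition~\ref{PROP:T_a(p^l)-support-bound}. I would organize the casework as follows. First, $p\in\set{2,3,5}$: here the Möbius-twisted term $\bm{1}_{p\ge 7}$ is absent, so $c_a(p^l)=p^{-l}T^\natural_a(p^l)$ directly, and one just quotes $\abs{T^\natural_a(p^l)}\le p^l$ (from Lemma~\ref{LEM:T_a-bound-for-nearly-cube-free-n}, or trivially), which is more than enough once the constants are allowed to depend on these finitely many primes — this is why $\gcd$ appears with $3a$ rather than $a$, and why $9a_2$ rather than $a_2$ appears (to harmlessly swallow the prime $3$). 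Second, $p\ge 7$ with $l=1$: then $c_a(p)=p^{-1}[T^\natural_a(p) - T^\natural_a(p)] = 0$ — wait, $c_a(p) = p^{-3}[T_a(p)-T_a(1)T_a(p)] $... actually $T_a(1)=$ (empty sum) $=1$? Let me instead say: $c_a(p)=p^{-1}[T^\natural_a(p)-T^\natural_a(1)T^\natural_a(p)]$ and $T^\natural_a(1)=T_a(1)=1$ (since $(\ZZ/1\ZZ)^\times$ is trivial and $e_1\equiv 1$), so $c_a(p)=0$; this gives a clean $p^{-2}$ bound (indeed $0$) in the $n_1$-regime. Third, $p\ge 7$ with $l=2$: use $c_a(p^2)=p^{-2}[T^\natural_a(p^2)-T^\natural_a(p)^2]$ together with Proposition~\ref{PROP:T_a(p^l)-support-bound} (which forces $T_a(p^2)=0$ unless $p\mid 3a$, i.e. unless $p\mid a$) and the Weil-type bound $\abs{T^\natural_a(p)}\le 6+2p^{-1/2}$; when $p\nmid a$ this yields $\abs{c_a(p^2)}\ll p^{-2}$, matching the $n_2$-regime bound $p^{-v_p(n)}\gcd(p^{v_p(n)/2},3a)=p^{-2}\cdot 1$, and when $p\mid a$ the cruder bound $\abs{T^\natural_a(p^2)}\le p$ gives $\abs{c_a(p^2)}\ll p^{-1}$, matching $p^{-2}\gcd(p,3a)=p^{-1}$. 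Fourth, $p\ge 7$ with $l\ge 3$: here $p^l$ is cube-full, so the "$n_3$" regime applies; I expand $c_a(p^l)=p^{-l}[T^\natural_a(p^l)-T^\natural_a(p^{l-1})T^\natural_a(p)]$ and use Proposition~\ref{PROP:T_a(p^l)-support-bound} to see both terms vanish unless $p^{l-1}\mid 3a$ (resp. $p^{l-2}\mid 3a$), i.e. unless $v_p(a)$ is large; combining the support constraint with the size bound $\abs{T^\natural_a(p^j)}\le p^j$ from Lemma~\ref{LEM:T_a-bound-for-nearly-cube-free-n} gives $\abs{c_a(p^l)}\ll p^{-l}\cdot p^l\cdot\bm{1}_{p^{l-2}\mid 3a}=\bm{1}_{p^{l-2}\mid 3a}$, and the threshold $p^{l-2}\mid 3a$ translates (after accounting for the square-full part $a_2$ and the factor $9$) into exactly $v_p(n_3)\le 2v_p(9a_2)$; in the complementary range the term is genuinely $0$, so the factor $p^{-v_p(n_3)/2}$ alone (with empty product) suffices.

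The main obstacle I expect is the bookkeeping in the third regime ($p^3\mid n$, $p$ large), where I must carefully match the vanishing threshold coming from Proposition~\ref{PROP:T_a(p^l)-support-bound} — which is phrased in terms of $v_p(a)$ — against the combinatorial quantity $v_p(9a_2)=v_p(a_2)+2\bm{1}_{p=3}$ appearing in the statement, keeping track of the Möbius-twisted cross term $T^\natural_a(p^{l-1})T^\natural_a(p)$ which has a one-step-weaker support constraint ($p^{l-2}\mid 3a$ rather than $p^{l-1}\mid 3a$) and hence governs the true threshold. A secondary (but routine) point is verifying that $n_2=\map{sq}(n/n_3)$ is indeed a perfect square: this holds because every prime $p\mid n/n_3$ has $v_p(n)\le 2$, so $v_p(\map{sq}(n/n_3))\in\set{0,2}$, always even; I would dispatch this in one line at the start. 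Once the local bounds are in hand, the final step is simply to take the product over $p\mid n$, collect the constants into $O(1)^{\omega(n)}$, and read off the three factors $n_1^{-2}$, $n_2^{-1}\gcd(n_2^{1/2},3a)$, and $n_3^{-1/2}\prod_{p\mid n_3,\,v_p(n_3)\le 2v_p(9a_2)}p^{v_p(n_3)/2}$ from the per-prime contributions.
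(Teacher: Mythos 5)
Your overall plan (reduce by multiplicativity to $n=p^l$, then split into the three regimes according to $l$, feeding in \eqref{EQN:c_a(p^l)-expand}, Lemma~\ref{LEM:T_a-bound-for-nearly-cube-free-n}, and Propositions~\ref{PROP:mod-p-bounds-on-T_a}, \ref{PROP:T_a(p^l)-support-bound}) is exactly what the paper does, and your handling of $l=1$ and $l=2$ for $p\ge 7$ is correct. But the third regime, which you yourself flag as the main obstacle, is where your resolution breaks down, and there is a second, more elementary gap at the small primes.

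The concrete problem at $l=3$, $p\ge 7$: you claim $\abs{c_a(p^l)}\ll \bm{1}_{p^{l-2}\mid 3a}$ and that this threshold ``translates exactly'' into $v_p(n_3)\le 2v_p(9a_2)$. It does not. Take $p\ge 7$, $l=3$, $v_p(a)=1$. Then $p^{l-2}=p\mid 3a$, so your bound is $\ll 1$. But $v_p(a_2)=0$ (since $p^2\nmid a$), hence $v_p(9a_2)=0$ and $v_p(n_3)=3>0=2v_p(9a_2)$: the product in \eqref{INEQ:technical-multiplicative-form-of-general-bound-on-c_a(n)} is empty and the required bound is $\ll p^{-3/2}$, which $\ll 1$ does not give. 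The lemma is still true here because in this window only the cross term $T^\natural_a(p^2)T^\natural_a(p)$ survives (Proposition~\ref{PROP:T_a(p^l)-support-bound} kills $T_a(p^3)$ since $p^2\nmid 3a$), and it is not merely nonzero-bounded-by-$1$ but genuinely small: $\abs{T^\natural_a(p^2)}\ll p$ by Lemma~\ref{LEM:T_a-bound-for-nearly-cube-free-n} and $\abs{T^\natural_a(p)}\ll p^{1/2}$ by Proposition~\ref{PROP:mod-p-bounds-on-T_a}, so $\abs{c_a(p^3)}\ll p^{-3}\cdot p\cdot p^{1/2}=p^{-3/2}$. The paper's proof captures this by working with the tighter threshold $p^{\max(2,l-2)}\mid 3a$ (which for $l=3$ is $p^2\mid 3a$) and invoking the Weil/cube-free bounds on the cross term in the leftover window; your uniform use of $\abs{T^\natural_a(p^j)}\le p^j$ throws that saving away. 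Note also that for $l\ge 4$ your threshold happens to be fine, since $l-2\ge l/2$ forces $v_p(a_2)\ge l/2$ once $v_p(a)\ge l-2\ge 2$, so the mismatch is isolated at $l=3$.

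The second gap is in the $p\in\set{2,3,5}$ bucket. You assert that $\abs{c_a(p^l)}\ll 1$ suffices because the implicit constant may depend on finitely many primes. That is not so: for fixed $p$ with $v_p(a)$ small, the right-hand side of \eqref{INEQ:technical-multiplicative-form-of-general-bound-on-c_a(n)} is $\asymp p^{-l/2}$ and tends to $0$ as $l\to\infty$, so no constant rescues a $\ll 1$ bound. You must still invoke Proposition~\ref{PROP:T_a(p^l)-support-bound} to see $T_a(p^l)=0$ (hence $c_a(p^l)=0$) in exactly those cases. The paper in fact handles all primes uniformly in Case~3, with no special small-prime detour, which sidesteps this issue entirely.
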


\begin{proof}
Since $n/n_3$ is cube-free, $n_2 = \rad(n_2)^2$.
As for \eqref{INEQ:technical-multiplicative-form-of-general-bound-on-c_a(n)}, note that both sides of \eqref{INEQ:technical-multiplicative-form-of-general-bound-on-c_a(n)} are multiplicative in $n$, so
we may assume $n=p^l$, where $p$ is prime and $l\geq 1$ is an integer.

\emph{Case~1: $l=1$.}
By \eqref{EQN:c_a(p^l)-expand}, $c_a(p) = 0$ if $p\geq 7$, and $c_a(p)\ll p^{-2}$ trivially if $p\leq 5$.
So $c_a(n)\ll n^{-2}$.
This proves \eqref{INEQ:technical-multiplicative-form-of-general-bound-on-c_a(n)}, since $(n_1,n_2,n_3) = (n,1,1)$.

\emph{Case~2: $l=2$.}
Again, use \eqref{EQN:c_a(p^l)-expand}.
If $p\nmid 3a$, then Propositions~\ref{PROP:T_a(p^l)-support-bound} and~\ref{PROP:mod-p-bounds-on-T_a} give $c_a(p^2) = p^{-2}[0 + O(1)] \ll p^{-2}$.
If $p\mid 3a$, then Lemma~\ref{LEM:T_a-bound-for-nearly-cube-free-n} and Proposition~\ref{PROP:mod-p-bounds-on-T_a} give $c_a(p^2)\ll p^{-2}[p+O(p)]\ll p^{-1}$.
Either way, $c_a(p^2)\ll n^{-1}\gcd(n^{1/2},3a)$.
This proves \eqref{INEQ:technical-multiplicative-form-of-general-bound-on-c_a(n)}, since $(n_1,n_2,n_3) = (1,n,1)$.

\emph{Case~3: $l\geq 3$.}
If $p^{\max(2, l-2)}\nmid 3a$, then \eqref{EQN:c_a(p^l)-expand} and Proposition~\ref{PROP:T_a(p^l)-support-bound} (plus Lemma~\ref{LEM:T_a-bound-for-nearly-cube-free-n}
if $l=3$) give $c_a(p^l)
\ll p^{-l/2}$.
In general, \eqref{EQN:c_a(p^l)-expand} and Lemma~\ref{LEM:T_a-bound-for-nearly-cube-free-n} give $c_a(p^l)
\ll 1$.
Either way,
\begin{equation*}
    c_a(n)
    \ll n^{-1/2} (p^{v_p(n)/2})^{\bm{1}_{v_p(3a)\geq \max(2,l-2)}}
    \ll n^{-1/2} (p^{v_p(n)/2})^{\bm{1}_{v_p(9a_2)\geq l/2}},
\end{equation*}
where in the final step we have used the fact that $p^{\max(2,l-2)}$ is square-full, $\map{sq}(3a)\mid \map{sq}(9a_2)$, and $\max(2,l-2)\geq l/2$.
This proves \eqref{INEQ:technical-multiplicative-form-of-general-bound-on-c_a(n)}, since $(n_1,n_2,n_3) = (1,1,n)$.
\end{proof}

In general, the series $\sum_{n\leq K} c_a(n)$ behaves better than $s_a(K)$.

\begin{proposition}
\label{PROP:c_a(n)-series-to-product}
Let $a,D,K\in \ZZ$ with $a\neq 0$ and $D,K\geq 1$.
Suppose $a\not\equiv \pm4\bmod{9}$.
\begin{enumerate}
    \item For $p$ prime, $1+c_a(p)+c_a(p^2)+\cdots$ converges absolutely to a real number $\gamma_p(a)>0$.
    \item If $p\nmid 3a$, then $\gamma_p(a)\geq (1-p^{-2})^{O(1)}$.
    If $p\mid 3a$, then $\gamma_p(a)\geq (1-p^{-1})^{O(1)}$.
    \item $\prod_{\textnormal{$p$ prime}} \gamma_p(a)$ converges absolutely to a real number $\gamma(a)\gg \prod_{p\mid a} (1-p^{-1})^{O(1)}$.
    \item If $a\in \mcal{S}(D)$, then $\sum_{n\leq K} c_a(n) = \gamma(a) + O_\eps(\abs{a}^\eps K^{\eps-1/6} D)$.
\end{enumerate}
\end{proposition}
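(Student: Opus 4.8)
The plan is to prove parts (1)--(4) in order, treating the Euler product $\sum_{l\geq 0}c_a(p^l)$ one prime at a time using the local bounds from Lemma~\ref{LEM:general-bound-on-c_a(n)}, and then assembling the global product and estimating the tail.

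For part (1), I would split the local series at a given prime $p$. By \eqref{EQN:c_a(p^l)-expand}, $c_a(p^l)$ is real for every $l$, so any limit is real. For $p\geq 7$ with $p\nmid 3a$, Proposition~\ref{PROP:T_a(p^l)-support-bound} forces $T_a(p^l)=0$ for $l\geq 2$, and $c_a(p)=0$, so the local series is exactly $1$; in particular it is positive. For general $p$, use Lemma~\ref{LEM:general-bound-on-c_a(n)} at $n=p^l$: in the notation there, $|c_a(p^l)|\ll p^{-l/2}\cdot p^{v_p(n)/2}\bm{1}_{v_p(9a_2)\geq l/2}$ when $l\geq 3$, and this bound is trivially summable since only finitely many $l$ (those with $l\leq 2v_p(9a_2)$) contribute the ``bad'' factor, and for all larger $l$ one has $|c_a(p^l)|\ll p^{-l/2}$; the $l=1,2$ terms are $O(p^{-1})$. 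So absolute convergence of $\sum_l c_a(p^l)$ is immediate. For positivity of the partial sum (hence of $\gamma_p(a)$), note $1+c_a(p)\geq 1-O(p^{-2})>0$ for $p\geq 7$ (where it equals $1$) and for $p=5$ directly from \eqref{EQN:c_a(p^l)-expand} and the mod-$p$ bounds; the only delicate primes are $p\in\{2,3\}$, and here one must use the hypothesis $a\not\equiv\pm4\bmod 9$. Concretely, $\gamma_p(a)$ is a limit of the ``$p$-part of $s_a(K)M_a(K)$'', and since $M_a(K)$ removes the primes dividing $30$, for $p\mid 30$ one has $\gamma_p(a)=\sum_{l\geq 0}p^{-3l}T_a(p^l)=\sigma_{p,a}$ (the true $p$-adic density of Proposition-style \eqref{EQN:define-F_a-p-adic-density}), which is positive precisely under the congruence condition $a\not\equiv\pm4\bmod 9$ (the classical local solubility at $3$).

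For part (2), the quantitative lower bounds $\gamma_p(a)\geq(1-p^{-2})^{O(1)}$ for $p\nmid 3a$ and $\gamma_p(a)\geq(1-p^{-1})^{O(1)}$ for $p\mid 3a$ follow by bounding the tail of the local series by a geometric-type series: for $p\nmid 3a$ the only nonzero terms past $l=0$ are absent when $p\geq 7$, and for $p=5$ one has $\gamma_p(a)=1+O(p^{-2})$ directly; for $p\mid 3a$, Lemma~\ref{LEM:general-bound-on-c_a(n)} gives $|c_a(p^l)|\ll p^{-1}\cdot O(1)$ uniformly in the relevant range and $\ll p^{-l/2}$ beyond, so $\gamma_p(a)\geq 1-\sum_{l\geq 1}|c_a(p^l)|\geq 1-O(p^{-1})\geq(1-p^{-1})^{O(1)}$ once $p$ exceeds an absolute constant; finitely many small $p$ are absorbed into the implied exponent using part (1)'s positivity. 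For part (3), multiply part (2) over all $p$: $\prod_p\gamma_p(a)\gg\prod_{p\nmid 3a}(1-p^{-2})^{O(1)}\prod_{p\mid 3a}(1-p^{-1})^{O(1)}\gg\prod_{p\mid a}(1-p^{-1})^{O(1)}$, using $\prod_p(1-p^{-2})\gg 1$ and $(1-p^{-1})\gg(1-p^{-1})$ for the $O(1)$ primes dividing $3$; absolute convergence of the product is equivalent to $\sum_p|c_a(p)+c_a(p^2)+\cdots|<\infty$, which follows from $\sum_p p^{-1}\gcd(p,3a)^?$\dots more carefully, from $\sum_p(\text{local tail})<\infty$ since for $p\nmid 3a$, $p\geq 7$ the tail vanishes and otherwise is $O(p^{-1})$ over the finitely many $p\mid 3a$.

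The substantive part is (4), the truncation estimate $\sum_{n\leq K}c_a(n)=\gamma(a)+O_\eps(|a|^\eps K^{\eps-1/6}D)$ for $a\in\mathcal S(D)$. The plan is to write $\gamma(a)-\sum_{n\leq K}c_a(n)=\sum_{n>K}c_a(n)$ (justified by absolute convergence of $\sum_n c_a(n)=\prod_p\gamma_p(a)=\gamma(a)$, itself a consequence of parts (1)--(3) and multiplicativity of $c_a$), and to bound the tail using the multiplicative bound \eqref{INEQ:technical-multiplicative-form-of-general-bound-on-c_a(n)}. Decompose $n=n_1n_2n_3$ with $n_1$ cube-free-and-square-free, $n_2$ square (the square part), $n_3$ cube-full, as in Lemma~\ref{LEM:general-bound-on-c_a(n)}. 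Then $|c_a(n)|\leq O(1)^{\omega(n)}n_1^{-2}\cdot n_2^{-1}\gcd(n_2^{1/2},3a)\cdot n_3^{-1/2}\prod_{p\mid n_3,\,v_p(n_3)\leq 2v_p(9a_2)}p^{v_p(n_3)/2}$. The sum over $n_1$ of $O(1)^{\omega(n_1)}n_1^{-2}$ converges. For $n_2=r^2$, $\sum_r O(1)^{\omega(r)}r^{-2}\gcd(r,3a)$ is $O_\eps(|a|^\eps)$ and the part with $r^2>$ (a threshold) contributes a small power saving. The delicate factor is $n_3$: the extra factor $\prod p^{v_p(n_3)/2}$ over primes with $v_p(n_3)\leq 2v_p(9a_2)$ can be as large as $\sqrt{\mathrm{cub}(9a_2)}$, which is where the hypothesis $a\in\mathcal S(D)$ (so $a_2\leq D$, hence $\mathrm{cub}(9a_2)\leq 9D$ up to the prime $3$) enters and produces the factor $D$ in the error term. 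Writing $n_3$'s contribution as a product of (an $|a|^\eps$-controlled sum over the ``bad'' primes, which is $\ll D^{1/2+\eps}$ or so) times $\sum_{m\text{ cube-full}}O(1)^{\omega(m)}m^{-1/2}$ over the remaining ``good'' cube-full part, one gets that the tail $n>K$ is $\ll_\eps |a|^\eps D\cdot K^{-\eta+\eps}$ for the exponent $\eta=1/6$ coming from the weakest of the three saving rates (the cube-full sum $\sum_{m>Y,\,m\text{ cube-full}}m^{-1/2}\ll Y^{-1/6}$ since cube-full integers up to $Y$ number $O(Y^{1/3})$ and $m^{-1/2}$ on the dyadic range $[Y,2Y]$ is $\asymp Y^{-1/2}$, giving $Y^{1/3-1/2}=Y^{-1/6}$). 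I expect the bookkeeping around separating the ``$a$-dependent bad'' cube-full part from the free part — and verifying that the worst case indeed yields exponent $1/6$ rather than something smaller — to be the main technical obstacle, but it is entirely mechanical given Lemma~\ref{LEM:general-bound-on-c_a(n)} and standard estimates for sums over square-full and cube-full integers.
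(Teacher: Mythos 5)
Your overall strategy matches the paper's: show absolute convergence of the local Euler factor, establish quantitative positivity, multiply out, and bound the tail of $\sum_n c_a(n)$ using Lemma~\ref{LEM:general-bound-on-c_a(n)} with a dyadic decomposition whose weakest saving is the $N_3^{-1/6}$ from cube-full integers. Part~(4) and the identification of the exponent $1/6$ are right.

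However, there is a genuine error in your handling of parts (1)--(3). You claim that for $p\geq 7$ with $p\nmid 3a$ the local series ``is exactly $1$'' because $T_a(p^l)=0$ for $l\geq 2$ and $c_a(p)=0$. This overlooks the cross term in \eqref{EQN:c_a(p^l)-expand}: one has $c_a(p^2)=p^{-6}[T_a(p^2)-T_a(p)^2]=-p^{-6}T_a(p)^2$, which is generically nonzero (and nonpositive). So the Euler factor is $1-p^{-6}T_a(p)^2$, not $1$. You repeat this incorrect ``the tail vanishes'' claim when discussing (2) and (3). The conclusions are saved because $p^{-6}T_a(p)^2\ll p^{-2}$ by Proposition~\ref{PROP:mod-p-bounds-on-T_a}, so absolute convergence and the $1+O(p^{-2})$ estimate still hold --- but your stated reason is wrong and the positivity of the factor is not ``trivial'': it requires $|T_a(p)|<p^3$, which you should invoke explicitly (e.g.\ via Proposition~\ref{PROP:mod-p-bounds-on-T_a}(3)). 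The paper avoids all this casework by first establishing the closed form $\gamma_p(a)=\sigma_{p,a}\cdot(1-p^{-3}T_a(p)\bm{1}_{p\geq 7})$ (a telescoping identity from \eqref{EQN:c_a(p^l)-expand} together with $\sigma_{p,a}=\sum_{l\geq 0}p^{-3l}T_a(p^l)$), which makes positivity and the lower bounds in (1)--(3) transparent for every prime at once; you would do well to prove and use that identity rather than arguing prime-by-prime, since it also feeds directly into (2) via the Hensel estimate $\sigma_{p,a}\geq 1+p^{-3}T_a(p)-p^{-2}$ for $p\geq 7$.
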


\begin{proof}
By \eqref{EQN:c_a(p^l)-expand}, Proposition~\ref{PROP:T_a(p^l)-support-bound}, and \eqref{EQN:define-F_a-p-adic-density}, the sum $\sum_{j\ge 0} c_a(p^j)$ converges \emph{absolutely} to
\begin{equation}
\label{EQN:gamma_p-via-sigma_p}
    \gamma_p(a)\defeq \sigma_{p,a}\cdot (1-p^{-3}T_a(p)\bm{1}_{p\geq 7}),
\end{equation}
since $a\ne 0$.
Here $\sigma_{p,a}>0$
by \cite{browning2021cubic}*{Lemma~2.16},
since $a\not\equiv \pm4\bmod{9}$.
And
\begin{equation}
\label{EQN:positive-mollifier/inverse-Euler-factor-at-p}
    1-p^{-3}T_a(p)\bm{1}_{p\geq 7} \in (0.01, 1.99)
\end{equation}
by Proposition~\ref{PROP:mod-p-bounds-on-T_a}.
So $\gamma_p(a)>0$ by \eqref{EQN:gamma_p-via-sigma_p}.
This completes the proof of (1).

Now we bound $\gamma_p(a)$ from below.
First suppose $p\leq 5$.
By \cite{browning2021cubic}*{Lemma~2.16}, $\sigma_{p,a} \ge p^{-6} \gg 1$.
So $\gamma_p(a)\gg 1$ by \eqref{EQN:gamma_p-via-sigma_p} and \eqref{EQN:positive-mollifier/inverse-Euler-factor-at-p}.
Now suppose $p\geq 7$.
Let $N^\ast_a(p)$ denote the number of solutions $\bm{y}\in \FF_p^3\setminus \set{\bm{0}}$ to $F_0(\bm{y})=a$.
By \eqref{EQN:define-F_a-p-adic-density} and Hensel's lemma, $\sigma_{p,a}\geq p^{-2}N^\ast_a(p)$.
But $N^\ast_a(p)\geq N_a(p)-1$,
and $N_a(p) = p^2 + p^{-1}T_a(p)$ by \eqref{EQN:T_a(p)-via-point-counts},
so $\sigma_{p,a}\geq 1+p^{-3}T_a(p)-p^{-2}$.
Since $p\geq 7$, it follows from \eqref{EQN:gamma_p-via-sigma_p}, \eqref{EQN:positive-mollifier/inverse-Euler-factor-at-p}, and Proposition~\ref{PROP:mod-p-bounds-on-T_a} that
$
\gamma_p(a) \geq 1-p^{-6}T_a(p)^2 - 1.99p^{-2}
= 1 - O(p^{-2}\bm{1}_{p\nmid 3a} + p^{-1}\bm{1}_{p\mid 3a})$.
Since $\gamma_p(a)>0$ by (1), we conclude that (2) holds for $p\geq 7$.
By enlarging the $O(1)$'s in (2) if necessary, we can then ensure that (2) holds for all primes $p$.

For (3), first note that if $p\geq 7$ and $p\nmid 3a$, then $N^\ast_a(p) = N_a(p)$, so $\sigma_{p,a} = p^{-2}N_a(p)$ by \eqref{EQN:define-F_a-p-adic-density} and Hensel's lemma;
and hence the arguments in the previous paragraph imply $\gamma_p(a) = 1 + O(p^{-2})$ (uniformly over $p\nmid 30a$).
Thus $\prod_{\textnormal{$p$ prime}} \gamma_p(a)$ converges absolutely to some $\gamma(a)\in \RR_{>0}$.
By (2), then, $\gamma(a)\gg \prod_{p\mid 3a} (1-p^{-1})^{O(1)}$.
So (3) holds.

For (4), note that by Lemma~\ref{LEM:general-bound-on-c_a(n)} (and the definition of $\mcal{S}(D)$),
\begin{equation}
\label{INEQ:apply-c_a(n)-bound-pointwise-and-over-extend}
    \sum_{n>K} \abs{c_a(n)}
    \ll_\eps \sum_{\substack{n_1,m_2,n_3\geq 1: \\ n_1m_2^2n_3\geq K,\; n_3=\map{cub}(n_3)}}
    (n_1m_2^2n_3)^\eps n_1^{-2} m_2^{-2} \gcd(m_2,3a) n_3^{-1/2} (9D).
\end{equation}
The contribution to the right-hand side of \eqref{INEQ:apply-c_a(n)-bound-pointwise-and-over-extend} from $(n_1,m_2,n_3)\in [N_1,2N_1) \times [M_2,2M_2) \times [N_3,2N_3)$ (where $(2N_1)(2M_2)^2(2N_3)\geq K$) is at most $O_\eps((N_1M_2N_3)^{2\eps} D)$ times
\begin{equation*}
\frac{N_1 N_3^{1/3}}{N_1^2 M_2^2 N_3^{1/2}}
\sum_{d\mid 3a} \sum_{m_2\in [M_2,2M_2):\, d\mid m_2} d
\ll \sum_{d\mid 3a} \frac{N_1 M_2 N_3^{1/3}}{N_1^2 M_2^2 N_3^{1/2}}
\ll_\eps \frac{\abs{a}^\eps}{(N_1M_2^2N_3)^{1/6}}
\ll_\eps \frac{\abs{a}^\eps K^{3\eps-1/6}}{(N_1M_2^2N_3)^{3\eps}}.
\end{equation*}
Summing over $N_1,M_2,N_3\in \set{1,2,4,8,\ldots}$, 
we get $\sum_{n>K} \abs{c_a(n)} \ll_\eps \abs{a}^\eps K^{3\eps-1/6} D$.
Thus $\sum_{n\geq 1} c_a(n)$ converges absolutely to $\gamma(a)$;
and furthermore, (4) holds.
\end{proof}

To bound $M_a(K)$ on average, and to handle the sum over $n_1n_2>K$ in \eqref{EQN:s_a(K)M_a(K)-expand}, we can use Lemma~\ref{LEM:restricted-2jth-moment-bound-for-general-convolution} below, whose proof requires the following two lemmas.


\begin{lemma}
\label{LEM:T_a-support-bound-for-nearly-square-free-a}
Let $D,n\geq 1$ be integers.
Let $a\in \mcal{S}(D)$.
If $T_a(n)\neq 0$, then $n\in \mcal{C}(27D^{3/2})$.
\end{lemma}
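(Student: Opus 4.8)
The plan is to reduce to prime powers by multiplicativity, then apply the support bound of Proposition~\ref{PROP:T_a(p^l)-support-bound} one prime at a time, and finally repackage the resulting divisibilities as a bound on $\map{cub}(n)$.

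First I would invoke Proposition~\ref{PROP:standard-multiplicativity-properties-of-T_a,S^+_0}: since $T_a$ is multiplicative, $T_a(n) = \prod_{p\mid n} T_a(p^{v_p(n)})$, so the hypothesis $T_a(n)\neq 0$ forces $T_a(p^{v_p(n)})\neq 0$ for every prime $p\mid n$. Hence, for each prime $p$ with $l\defeq v_p(n)\geq 3$, Proposition~\ref{PROP:T_a(p^l)-support-bound} (used in contrapositive form, which is legitimate since $l\geq 2$) yields $p^{l-1}\mid 3a$. Since $\map{cub}(n) = \prod_{p:\, v_p(n)\geq 3} p^{v_p(n)}$, it then suffices to bound each such $p^l$ in terms of $a_2\defeq \map{sq}(\abs{a})\leq D$.

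Next I would convert each divisibility $p^{l-1}\mid 3a$ (with $l\geq 3$) into such a bound. For $p\neq 3$ this reads $p^{l-1}\mid a$ with $l-1\geq 2$, so $v_p(a)\geq l-1\geq 2$ and therefore $p^{v_p(a)}\mid a_2$; combined with the elementary inequality $l\leq \tfrac{3}{2}(l-1)$ (valid exactly for $l\geq 3$) this gives $p^l\leq (p^{l-1})^{3/2}\leq (p^{v_p(a_2)})^{3/2}$. For $p=3$ the factor $3$ costs one power, so I only obtain $v_3(a)\geq l-2$; when $v_3(a)\geq 2$ the inequality $l\leq 3+\tfrac{3}{2}v_3(a)$ (which follows from $v_3(a)\geq l-2$) gives $3^l\leq 27\,(3^{v_3(a_2)})^{3/2}$, and when $v_3(a)\leq 1$ the same bound $v_3(a)\geq l-2$ forces $l=3$, so $3^l = 27 = 27\,(3^{v_3(a_2)})^{3/2}$ trivially. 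In all cases $3^l\leq 27\,(3^{v_3(a_2)})^{3/2}$.

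Finally I would multiply these estimates over all primes $p$ with $v_p(n)\geq 3$. Using $\prod_{p:\, v_p(n)\geq 3} p^{v_p(a_2)}\leq \prod_p p^{v_p(a_2)} = a_2$, this yields $\map{cub}(n)\leq 27\,a_2^{3/2}\leq 27 D^{3/2}$, i.e.\ $n\in\mcal{C}(27 D^{3/2})$, as claimed. The only step needing any care is the prime $p=3$, where the gap between $v_3(3a)$ and $v_3(a)$ is precisely what produces the constant $27$; everything else is routine bookkeeping.
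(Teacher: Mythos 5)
Your proof is correct and follows essentially the same route as the paper's: reduce to prime powers via Proposition~\ref{PROP:standard-multiplicativity-properties-of-T_a,S^+_0}, extract $p^{v_p(n)-1}\mid 3a$ from Proposition~\ref{PROP:T_a(p^l)-support-bound}, and repackage as a bound on $\map{cub}(n)$. The only difference is cosmetic: the paper observes $\map{sq}(3a)\mid 9a_2$ once and for all (absorbing the prime $3$ uniformly into the constant), whereas you treat $p=3$ as an explicit subcase; both yield $\map{cub}(n)\leq 27 a_2^{3/2}$.
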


\begin{proof}
Suppose $T_a(n)\neq 0$.
Let $a_2\leq D$ be the square-full part of $\abs{a}$.
Suppose $p$ is a prime dividing the cube-full part $n_3$ of $n$.
Since $T_a(n)\neq 0$, Proposition~\ref{PROP:standard-multiplicativity-properties-of-T_a,S^+_0} implies $T_a(p^{v_p(n)})\neq 0$.
So by Proposition~\ref{PROP:T_a(p^l)-support-bound}, $p^{v_p(n)-1}\mid 3a$.
Here $v_p(n)\ge 3$; so $p^{v_p(n)-1}\mid \map{sq}(3a)\mid \map{sq}(9a_2)$.
Therefore, $9a_2$ is divisible by $\prod_{p\mid n_3} p^{v_p(n)-1} \geq n_3^{2/3}$,
whence $n_3\leq (9a_2)^{3/2}\leq 27D^{3/2}$.
\end{proof}

The next lemma controls the large values of $\abs{T^\natural_a(n)}$ on average over $a$.
Let
\begin{equation*}
T^\natural_b(\bm{m})\defeq T^\natural_b(m_1)\cdots T^\natural_b(m_r),
\quad T^\natural_b(\bm{n})\defeq T^\natural_b(n_1)\cdots T^\natural_b(n_r),
\end{equation*}
whenever the variables $b$, $r$, $m_1,\dots,m_r$, $n_1,\dots,n_r$ are clear from context.

\begin{lemma}
\label{LEM:technical-complete-T_b-moment-bound}
Let $r,m_1,n_1,\dots,m_r,n_r\geq 1$ be integers.
Let $m_{i,3}$, $n_{i,3}$ denote the cube-full parts of $m_i$, $n_i$, respectively.
Suppose $m_1\cdots m_rn_1\cdots n_r$ is square-full.
Then
\begin{equation*}
    \EE_{b\in \ZZ/m_1\cdots m_rn_1\cdots n_r\ZZ}[\abs{T^\natural_b(\bm{m})}\cdot \abs{T^\natural_b(\bm{n})}]
    \ll_{r,\eps} \frac{\prod_{i=1}^{r}[(m_{i,3}n_{i,3})^{1/2} (m_in_i)^{1/2+\eps}]}{\rad(m_1\cdots m_rn_1\cdots n_r)}.
\end{equation*}
\end{lemma}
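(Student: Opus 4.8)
The plan is to reduce everything to a product of prime-power estimates via the Chinese remainder theorem, then bound each local factor using the support and size bounds for $T^\natural_a(p^l)$ established in Lemma~\ref{LEM:T_a-bound-for-nearly-cube-free-n}, Proposition~\ref{PROP:mod-p-bounds-on-T_a}, and Proposition~\ref{PROP:T_a(p^l)-support-bound}. First I would observe that, by Proposition~\ref{PROP:standard-multiplicativity-properties-of-T_a,S^+_0} and the Chinese remainder theorem, the average $\EE_{b\bmod{M}}[\abs{T^\natural_b(\bm{m})}\abs{T^\natural_b(\bm{n})}]$ factors as a product over the primes $p\mid M$, where $M=m_1\cdots m_rn_1\cdots n_r$. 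So it suffices to prove the bound when $M$ is a prime power $p^s$, i.e.\ when every $m_i$ and $n_i$ is a (possibly trivial) power of a single prime $p$, with $s=\sum_i(v_p(m_i)+v_p(n_i))\geq 2$ since $M$ is square-full. Write $a_i=v_p(m_i)$, $\beta_i=v_p(n_i)$.

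The heart of the matter is then the local average
\[
\EE_{b\in \ZZ/p^s\ZZ}\Bigl[\,\prod_{i=1}^r \abs{T^\natural_b(p^{a_i})}\cdot\prod_{i=1}^r\abs{T^\natural_b(p^{\beta_i})}\,\Bigr].
\]
Here I would split the range of $b\bmod{p^s}$ according to $v\defeq\min(v_p(3b), s)$, i.e.\ according to how divisible $b$ is by $p$. For a given value of $v$, there are $O(p^{s-v})$ residues $b\bmod{p^s}$ (more precisely $\asymp p^{s-v}$ when $v<s$, and $O(1)$ when $v=s$, up to the harmless factor at $p=3$). On this set, Proposition~\ref{PROP:T_a(p^l)-support-bound} forces $T^\natural_b(p^l)=0$ unless $l\leq 1$ or $p^{l-1}\mid 3b$, i.e.\ unless $l-1\leq v$; so the product vanishes unless every exponent $a_i,\beta_i$ satisfies $a_i,\beta_i\leq v+1$. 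When it does not vanish, I bound each nonzero factor $\abs{T^\natural_b(p^l)}$ using Lemma~\ref{LEM:T_a-bound-for-nearly-cube-free-n}: for $l\leq 2$ one gets $\abs{T^\natural_b(p^l)}\ll p^{l/2}$, and for $l\geq 3$ (which forces $l\leq v+1$, so $p\mid b$ to high order) one gets $\abs{T^\natural_b(p^l)}\ll (n_{3}n)^{1/2}$ with $n=p^l$, i.e.\ $\ll p^l$ — note the cube-full part of $p^l$ is $p^l$ itself when $l\geq 3$. The key accounting point is that each factor with exponent $l\geq 3$ contributes $p^{l}=(p^l)^{1/2}(p^l)^{1/2}$, matching the claimed $(m_{i,3}n_{i,3})^{1/2}(m_in_i)^{1/2}$ shape, while factors with $l\leq 2$ contribute $p^{l/2}=(m_in_i)^{1/2}$ with trivial cube-full part, again matching. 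Summing the $O(p^{s-v})$ contribution of each $v$-stratum, weighted by $\prod_i p^{a_i/2 \text{ or } a_i}$ (and similarly for $\beta_i$), against $p^{-s}$ from the expectation normalization, leaves a geometric-type sum in $v$ dominated by its largest term; one checks this largest term is $\ll p^{-1}\cdot\prod_i[(m_{i,3}n_{i,3})^{1/2}(m_in_i)^{1/2}]$, where the single factor $p^{-1}=1/\rad(p^s)$ appears because $\rad(p^s)=p$. Re-multiplying over all primes $p\mid M$ recovers $1/\rad(M)$ in the denominator and the full product in the numerator; the $(m_in_i)^\eps$ slack absorbs the $O(1)^{\omega(M)}$ factors from Lemma~\ref{LEM:T_a-bound-for-nearly-cube-free-n} and the number-of-strata and divisor-type losses, and the implied constant's dependence on $r$ comes from there being $2r$ factors to bound.

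The main obstacle I anticipate is the bookkeeping in the multi-exponent stratification: one must simultaneously track, for each prime, which of the up to $2r$ exponents $a_i,\beta_i$ exceed $2$ (so that Lemma~\ref{LEM:T_a-bound-for-nearly-cube-free-n} gives the full $p^l$ rather than $p^{l/2}$) and verify that the $v$-summation does not lose more than a single power of $p$ — in particular, that strata with $v$ large (forced when some exponent is $\geq 3$) carry few residues $b$, precisely compensating the larger pointwise bound there. A clean way to organize this is to set $L\defeq\max_i(a_i,\beta_i)$; the product is supported on $v\geq L-1$, and on each such stratum the pointwise bound is $\ll_r \prod_i p^{\lceil a_i/2\rceil'}\cdots$ uniformly — here one should be slightly careful to use $p^{l}$ only when $l\geq 3$ and $p^{l/2}$ otherwise, which is exactly the dichotomy in the proof of Lemma~\ref{LEM:T_a-bound-for-nearly-cube-free-n}. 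A minor separate nuisance is the prime $p=3$, where $T^\natural_b(3^l)$ can have one extra power of $3$ of slack relative to the generic bound; this is harmless as it affects only the single prime $3$ and is absorbed into the implied constant. With the stratification set up, the remaining computation is a routine geometric sum.
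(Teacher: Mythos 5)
Your overall plan — reduce to prime-power moduli via multiplicativity, then stratify $b \bmod p^s$ by $p$-divisibility and use the support result Proposition~\ref{PROP:T_a(p^l)-support-bound} together with the size bounds — is the right idea, and it is essentially a refinement of what the paper does (the paper only splits two ways, $p\mid 3b$ vs.\ $p\nmid 3b$, rather than by the exact value of $v_p(3b)$). However, as written your argument has a genuine gap on the $v=0$ stratum. You bound every nonzero local factor via Lemma~\ref{LEM:T_a-bound-for-nearly-cube-free-n}, in particular $\abs{T^\natural_b(p)}\ll p^{1/2}$, uniformly over strata. When the maximum exponent $L=\max_i(a_i,\beta_i)$ equals $1$, the product is supported on all of $v\ge 0$, and the $v=0$ stratum contains $\asymp p^s$ residues; with your bound its contribution to the expectation is $\ll p^{s/2}=\prod_i(m_in_i)^{1/2}$, which is a full factor of $p=\rad(p^s)$ larger than the target. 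Your claim that ``the largest term is $\ll p^{-1}\prod_i[\ldots]$'' is therefore false for $L=1$ under the stated pointwise bound. This is not cosmetic: the $1/\rad$ saving is exactly what makes the downstream sum over $R$ in Lemma~\ref{LEM:restricted-2jth-moment-bound-for-general-convolution} converge.

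The fix is the one the paper uses: on the stratum $p\nmid 3b$ (your $v=0$), replace the generic bound $\abs{T^\natural_b(p)}\ll p^{1/2}$ by the sharper estimate $\abs{T^\natural_b(p)}\le 6+2p^{-1/2}\ll 1$ from Proposition~\ref{PROP:mod-p-bounds-on-T_a}(1) (valid precisely because $p\nmid 3b$), together with Proposition~\ref{PROP:T_a(p^l)-support-bound} to kill all $l\ge 2$ factors. The $v=0$ contribution then becomes $\ll_r 1$, and one needs the square-fullness hypothesis a second time — not merely to make the stratification nontrivial, but to guarantee $\prod_i(m_in_i)^{1/2}\ge p$ so that this $O_r(1)$ term is absorbed into $p^{-1}\prod_i[(m_{i,3}n_{i,3})^{1/2}(m_in_i)^{1/2}]$. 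With those two points made explicit, your finer $v$-stratification does give the lemma, but the paper's coarser dichotomy ($p\mid 3b$ or not) reaches the same end more directly: for $p\mid 3b$ one uses Lemma~\ref{LEM:T_a-bound-for-nearly-cube-free-n} and the count $\ll p^{s-1}\gcd(3,p)$ of such residues, for $p\nmid 3b$ one uses Propositions~\ref{PROP:mod-p-bounds-on-T_a} and \ref{PROP:T_a(p^l)-support-bound} to get $\ll_r 1$ pointwise, and the resulting $+1$ is absorbed by square-fullness.
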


\begin{proof}
By Proposition~\ref{PROP:standard-multiplicativity-properties-of-T_a,S^+_0}, we may assume that $m_1\cdots m_rn_1\cdots n_r$ is a prime power;
or equivalently, that $\rad(m_1\cdots m_rn_1\cdots n_r)$ equals some prime $p$.
Applying Lemma~\ref{LEM:T_a-bound-for-nearly-cube-free-n} to residues $b$ with $p\mid 3b$,
and Propositions~\ref{PROP:mod-p-bounds-on-T_a} and~\ref{PROP:T_a(p^l)-support-bound} to residues $b$ with $p\nmid 3b$, gives
\begin{equation*}
    \EE_{b\in \ZZ/m_1\cdots m_rn_1\cdots n_r\ZZ}[\abs{T^\natural_b(\bm{m})}\cdot \abs{T^\natural_b(\bm{n})}]
    \ll_r \frac{\prod_{i=1}^{r}[(m_{i,3}n_{i,3})^{1/2} (m_in_i)^{1/2}]}{p/\gcd(3,p)} + 1.
\end{equation*}
But $m_1\cdots m_rn_1\cdots n_r$ is square-full,
so $\prod_{i=1}^{r} (m_in_i)^{1/2} \geq p$, and the lemma follows.
\end{proof}




The following result can loosely be thought of as an algebro-geometric analog of \cite{duke2000problem}*{Theorem~4},
emphasizing a different aspect.
It only applies over relatively small moduli.

\begin{lemma}
\label{LEM:restricted-2jth-moment-bound-for-general-convolution}
Let $A,K_1,K_2,D,j\geq 1$ be integers.
Let $\beta\maps \ZZ^2 \to \RR$ be a function supported on $[K_1,2K_1) \times [K_2,2K_2)$.
For $a\in \ZZ$, let
\begin{equation}
\label{EQN:define-general-T_a-convolution-type-polynomial-P_a}
P_a\defeq \sum_{m,n\geq 1:\, n\in \mcal{S}(1)} \beta(m,n) \cdot T^\natural_a(m)T^\natural_a(n).
\end{equation}
Suppose $A\geq (K_1K_2)^{3j}$.
Then
\begin{equation}
\label{INEQ:restricted-2jth-moment-bound-for-general-convolution}
\sum_{a\in [-A,A]\cap \mcal{S}(D)} \abs{P_a}^{2j}
\ll_{j,\eps} A \cdot \min(K_1,D^{3/2})^j\cdot (K_1K_2)^{j+\eps} \cdot (\max{\abs{\beta}})^{2j}.
\end{equation}
\end{lemma}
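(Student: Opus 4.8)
The plan is to expand the $2j$th power, reduce the $a$-sum to complete sums, and exploit a vanishing phenomenon to cut the sum down to a thin "diagonal". Since each $T^\natural_a(n)$ is real, $\abs{P_a}^{2j}=P_a^{2j}$. Expanding the $2j$th power of \eqref{EQN:define-general-T_a-convolution-type-polynomial-P_a} and bounding each factor $\beta(m_i,n_i)$ by $\max\abs{\beta}$, it suffices to show
\[
\sum_{\bm m,\bm n}\Bigl\lvert\sum_{a\in[-A,A]\cap\mcal S(D)}T^\natural_a(\bm m)\,T^\natural_a(\bm n)\Bigr\rvert
\ll_{j,\eps}A\cdot\min(K_1,D^{3/2})^j\,(K_1K_2)^{j+\eps},
\]
where $\bm m=(m_1,\dots,m_{2j})$ runs over $[K_1,2K_1)^{2j}$, $\bm n=(n_1,\dots,n_{2j})$ runs over square-free tuples in $[K_2,2K_2)^{2j}$, and $T^\natural_b(\bm m)=\prod_i T^\natural_b(m_i)$ as before (so $r=2j$). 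By Lemma~\ref{LEM:T_a-support-bound-for-nearly-square-free-a}, applied to each coordinate and using $a\in\mcal S(D)$, the inner sum vanishes unless $\operatorname{cub}(m_i)\le 27D^{3/2}$ for every $i$; I restrict to such $\bm m$ henceforth (the $n_i$ are already cube-free).

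To handle the condition $a\in\mcal S(D)$, I would write each such $a$ uniquely as $\pm qb$ with $q=\operatorname{sq}(\abs a)\le D$ square-full and $b=\abs a/q$ square-free and coprime to $q$, and then strip the square-free condition on $b$ by M\"obius. For fixed admissible data the remaining sum over $b$ in a range of length $\asymp A/q$ has a summand periodic in $b$ with period dividing $\prod_i m_in_i$, and $\prod_i m_in_i\ll_j(K_1K_2)^{2j}\ll_j A^{2/3}$ by the hypothesis $A\ge(K_1K_2)^{3j}$ (the case $K_1K_2=O_j(1)$ being trivial, since then even the bound from $\abs{T^\natural_a(k)}\le k$ suffices). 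Hence this $b$-sum equals $\bigl(\asymp(A/q)/(\text{period})\bigr)$ times the corresponding complete sum, plus a boundary error at most the $\ell^1$-norm of the summand over one period.

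The crucial observation is that the complete sum vanishes unless $\prod_i m_in_i$ is square-full: it factors over primes, and a prime $p$ dividing $\prod_i m_in_i$ exactly once leaves the local factor $\sum_{u\bmod p}T^\natural_u(p)=p^{-2}\sum_{u\bmod p}\bigl(pN_u(p)-p^3\bigr)=p^{-2}(p\cdot p^3-p\cdot p^3)=0$ (with $N_u(p)$ as in \eqref{EQN:T_a(p)-via-point-counts}; this is a $d=1$ instance of Lemma~\ref{LEM:unbalanced-T_b-moment-vanishing}). So only "diagonal" configurations contribute to the main term --- those in which every prime of $\prod_i m_in_i$ occurs with multiplicity $\ge 2$, which up to divisor-function factors amount to a free choice of $j$ of the $m_i$ and $j$ of the $n_i$. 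For these, the complete average is controlled by Lemma~\ref{LEM:technical-complete-T_b-moment-bound} (whose "square-full" hypothesis is exactly what has just been verified), contributing the factor $\prod_i(m_{i,3})^{1/2}(m_in_i)^{1/2+\eps}/\rad(\prod_i m_in_i)$ with $m_{i,3}=\operatorname{cub}(m_i)\le 27D^{3/2}$.

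Finally, an elementary summation finishes the main term: count the diagonal configurations via $\#\{k\le K:k\text{ square-full}\}\ll K^{1/2}$, use $\prod_i(m_{i,3})^{1/2}\ll_j\min(K_1,D^{3/2})^j$ from the cube-full restriction, estimate $\prod_i(m_in_i)^{1/2+\eps}/\rad(\prod_i m_in_i)$ with divisor bounds, and sum over $q$ using $\sum_{q\le D\text{ sq-full}}q^{-1}\ll 1$; this yields $\ll_{j,\eps}A\min(K_1,D^{3/2})^j(K_1K_2)^{j+\eps}$. For the boundary errors: when $\prod_i m_in_i$ is square-free the complete sum already vanishes, and the boundary is $\le\prod_i m_in_i\cdot\EE_{a\bmod\prod_i m_in_i}[\abs{T^\natural_a(\bm m)T^\natural_a(\bm n)}]\ll_\eps(K_1K_2)^{2j+\eps}$ (by multiplicativity and $\EE_{a\bmod p}[\abs{T^\natural_a(p)}]\ll 1$), so summed over the $\ll(K_1K_2)^{2j}$ such configurations this is $\ll(K_1K_2)^{4j+\eps}$, absorbed into $A(K_1K_2)^{j+\eps}$ precisely because $A\ge(K_1K_2)^{3j}$; the intermediate configurations (neither square-free nor square-full) and the square-full boundary terms are dominated with room to spare via Lemmas~\ref{LEM:T_a-bound-for-nearly-cube-free-n} and~\ref{LEM:technical-complete-T_b-moment-bound}. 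The hard part is exactly this last step: organizing the diagonal count so that both $\min(K_1,D^{3/2})^j$ and $(K_1K_2)^{j+\eps}$ emerge (non-pairing collisions cost only $(K_1K_2)^\eps$, but must be tracked), while keeping every completion error dominated --- which is the sole purpose of the hypothesis $A\ge(K_1K_2)^{3j}$.
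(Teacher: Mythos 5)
Your proposal identifies the right key lemmas (cube-full restriction via Lemma~\ref{LEM:T_a-support-bound-for-nearly-square-free-a}, the vanishing for non-square-full moduli, the moment bound of Lemma~\ref{LEM:technical-complete-T_b-moment-bound}), but it misses the paper's central simplification and therefore opens gaps it does not close.

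The simplification you miss is a positivity trick. Define $Q_a$ from $P_a$ by restricting the $m$-variable to $\mathcal{C}(27D^{3/2})$. Lemma~\ref{LEM:T_a-support-bound-for-nearly-square-free-a} gives $P_a = Q_a$ for every $a\in\mcal{S}(D)$, and then, since $\abs{Q_a}^{2j}\ge 0$, you may simply \emph{drop} the constraint $a\in\mcal{S}(D)$ and enlarge the sum to all $a\in[-A,A]$. The $\mathcal{S}(D)$-dependence has been entirely transferred into the range restriction on $m$, and the $a$-sum is now a clean interval. Your proposal instead keeps the constraint $a\in\mcal{S}(D)$ and tries to encode it via the substitution $a=\pm qb$ with $q=\map{sq}(\abs a)$ and $b$ square-free, stripping the square-free condition by M\"obius. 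This is both unnecessary and genuinely problematic: when you then try to complete the $b$-sum to full periods, the map $b\mapsto qd^2b \bmod N$ (with $N=\prod_i m_in_i$) is not a bijection on $\ZZ/N\ZZ$ when $\gcd(qd^2,N)>1$, so the resulting ``complete sum'' is actually a sum over a coset $g\ZZ/N\ZZ$, to which your claimed vanishing criterion (``vanishes unless $\prod m_in_i$ is square-full'') does not directly apply. One would have to track these cosets using something like Lemma~\ref{LEM:unbalanced-T_b-moment-vanishing}; you do not do this, and it would substantially complicate the argument for no gain.

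The second gap is in the diagonal count. You write that the square-full configurations ``up to divisor-function factors amount to a free choice of $j$ of the $m_i$ and $j$ of the $n_i$,'' and concede that ``non-pairing collisions cost only $(K_1K_2)^\eps$, but must be tracked.'' But you never do track them, and pairing is not actually a correct description of the square-full condition: it allows configurations where a prime is spread across three or more of the $m_i,n_i$. The paper closes this by grouping tuples according to $R=\rad(m_1\cdots m_rn_1\cdots n_r)$ and using Rankin's trick: for each fixed $R\le(4K_1K_2)^{r/2}$, the number of tuples with all coordinates dividing $R^\infty$ and lying in the prescribed dyadic ranges is $\ll_\eps(K_1R)^{r\eps}(K_2R)^{r\eps}$, and then the $1/R$ weight from Lemma~\ref{LEM:technical-complete-T_b-moment-bound} makes the sum over $R$ converge to a $K^{O_{r}(\eps)}$ factor. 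That is the precise mechanism producing $(K_1K_2)^{j+\eps}$; ``free choice plus divisor factors'' is a heuristic, not a proof. Finally, the paper does the completion of the $a$-sum by smooth Poisson summation, so the completion error is $O_k(K^{-k})$ rather than a one-period boundary term that needs separate bookkeeping across the square-free/non-square-full/square-full cases; this is also cleaner than what you sketch.
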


\begin{proof}
Let $K\defeq K_1K_2$ and $r\defeq 2j$.
For $a\in \ZZ$, let
\begin{equation*}
Q_a\defeq \sum_{m,n\ge 1:\, m\in \mcal{C}(27D^{3/2}),\; n\in \mcal{S}(1)}
\beta(m,n) \cdot T^\natural_a(m)T^\natural_a(n).
\end{equation*}
By \eqref{EQN:define-general-T_a-convolution-type-polynomial-P_a} and Lemma~\ref{LEM:T_a-support-bound-for-nearly-square-free-a}, we have $P_a = Q_a$ for all $a\in \mcal{S}(D)$.
Thus we may replace each $P_a$ in \eqref{INEQ:restricted-2jth-moment-bound-for-general-convolution} with $Q_a$;
and by positivity, we may then extend $[-A,A]\cap \mcal{S}(D)$ to the full interval $[-A,A]$.
It follows that the left-hand side of \eqref{INEQ:restricted-2jth-moment-bound-for-general-convolution} is at most
\begin{equation}
\label{EQN:over-extend-plus-Poisson}
\sum_{a\in [-A,A]} \abs{Q_a}^r
\ll \sum_{\substack{m_1,\dots,m_r\in \ZZ_{\geq 1}\cap \mcal{C}(27D^{3/2}), \\
n_1,\dots,n_r\in \ZZ_{\geq 1}\cap \mcal{S}(1), \\
b\in \ZZ/m_1\cdots m_rn_1\cdots n_r\ZZ}}
\beta(\bm{m},\bm{n}) T^\natural_b(\bm{m}) T^\natural_b(\bm{n})
\cdot \frac{A\cdot (1+O_k(K^{-k}))}{m_1\cdots m_rn_1\cdots n_r},
\end{equation}
where $\beta(\bm{m},\bm{n})\defeq \beta(m_1,n_1)\cdots \beta(m_r,n_r)$;
the inequality \eqref{EQN:over-extend-plus-Poisson} follows from Poisson summation after replacing (by positivity again) the sum over $a\in [-A,A]$ with a smoothed sum over $a\in \ZZ$.
The total contribution from $O_k(K^{-k})$ to the right-hand side of \eqref{EQN:over-extend-plus-Poisson} is trivially $\ll K^{2r}\cdot (\max{\abs{\beta}})^r\cdot K^r\cdot A\cdot O_k(K^{-k})$, which is satisfactory if $k\defeq 3r$, say.

It remains to handle the ``main'' term in \eqref{EQN:over-extend-plus-Poisson}.
Given $m_1,n_1,\dots,m_r,n_r\geq 1$, we have
\begin{equation*}
    \sum_{b\in \ZZ/m_1\cdots m_rn_1\cdots n_r\ZZ} T^\natural_b(\bm{m})T^\natural_b(\bm{n}) = 0
    \quad\textnormal{if $m_1\cdots m_rn_1\cdots n_r$ is not square-full}.
\end{equation*}
(This follows from Proposition~\ref{PROP:standard-multiplicativity-properties-of-T_a,S^+_0} and the fact that $\sum_{a\in \ZZ/p\ZZ} T_a(p) = 0$ for every prime $p$.
Note that we used a similar idea to prove Lemma~\ref{LEM:unbalanced-T_b-moment-vanishing}.)
On the other hand, if $m_1\cdots m_rn_1\cdots n_r$ is square-full, then Lemma~\ref{LEM:technical-complete-T_b-moment-bound} delivers the bound
\begin{equation*}
\EE_{b\in \ZZ/m_1\cdots m_rn_1\cdots n_r\ZZ}[\abs{T^\natural_b(\bm{m})}\cdot \abs{T^\natural_b(\bm{n})}]
\ll_{r,\eps} \frac{\min(K_1,D^{3/2})^{r/2}\cdot K^{r/2+\eps}}{\rad(m_1\cdots m_rn_1\cdots n_r)},
\end{equation*}
provided that we have $m_1,\dots,m_r\in \mcal{C}(27D^{3/2})$ and $n_1,\dots,n_r\in \mcal{S}(1)$.
It follows that
\begin{equation}
\label{INEQ:key-Poisson-main-term-bound}
\sum_{\substack{m_1,\dots,m_r\in \ZZ_{\geq 1}\cap \mcal{C}(27D^{3/2}), \\
n_1,\dots,n_r\in \ZZ_{\geq 1}\cap \mcal{S}(1), \\
b\in \ZZ/m_1\cdots m_rn_1\cdots n_r\ZZ}} \frac{\beta(\bm{m},\bm{n}) T^\natural_b(\bm{m}) T^\natural_b(\bm{n})}{m_1\cdots m_rn_1\cdots n_r}
\ll \sum_{R\leq (4K)^{r/2}} \sum_{\substack{m_1,\dots,m_r\mid R^\infty, \\
n_1,\dots,n_r\mid R^\infty}} \frac{C(\bm{m},\bm{n})}{R},
\end{equation}
where $C(\bm{m},\bm{n})\defeq \abs{\beta(\bm{m},\bm{n})}\cdot \min(K_1,D^{3/2})^{r/2}\cdot K^{r/2+\eps}$.
(For $u, v\in \ZZ$, we write $u\mid v^\infty$ if there exists a positive integer $k$ with $u\mid v^k$.)
But for any integers $R,N\geq 1$,
we have $\sum_{n\mid R^\infty} \bm{1}_{n\in [N,2N)}
\ll_\eps \sum_{n\mid R^\infty} (N/n)^\eps
= N^\eps \prod_{p\mid R} (1 - p^{-\eps})^{-1}
\ll_\eps N^\eps R^\eps$; so
\begin{equation}
\label{INEQ:epsilon-sharp-control-of-the-radical}
\sum_{m_1,\dots,m_r,n_1,\dots,n_r\mid R^\infty} \abs{\beta(\bm{m},\bm{n})}
\ll_{r,\eps} (\max{\abs{\beta}})^r (K_1 R)^{r\eps} (K_2 R)^{r\eps}.
\end{equation}
By \eqref{INEQ:epsilon-sharp-control-of-the-radical} and the bound $\sum_{R\le (4K)^{r/2}} R^{2r\eps-1} \ll_{r,\eps} K^{r^2\eps}$, the right-hand side of \eqref{INEQ:key-Poisson-main-term-bound} is $\ll_{r,\eps} (\max{\abs{\beta}})^r \min(K_1,D^{3/2})^{r/2} K^{r/2+(1+r+r^2)\eps}$.
Plugging this into \eqref{EQN:over-extend-plus-Poisson} gives \eqref{INEQ:restricted-2jth-moment-bound-for-general-convolution}.
\end{proof}

For integers $K\ge 1$ and reals $\eta>0$, let
\begin{equation}
\label{EQN:define-admissible-integers-with-eta-bounded-s_a(K)}
\mscr{E}(K;\eta)
\defeq \set{a\in \ZZ: a\not\equiv\pm4\bmod{9}}
\cap \set{a\in \ZZ: \abs{s_a(K)}\leq \eta}.
\end{equation}

\begin{theorem}
\label{THM:s_a(K)-is-typically-sizable}
Let $A,K,j\geq 1$ be integers.
Let $\eps,\eta>0$ be reals.
If $A\geq K^{6j}$, then
\begin{equation*}
\frac{\card{\mscr{E}(K;\eta) \cap [-A,A]}}{A}
\ll_{j,\eps} \eta^{1.8j} + A^\eps K^{-1.5j} + \eta^{-0.2j} K^{-0.8j} + K^{-1/24}.
\end{equation*}
\end{theorem}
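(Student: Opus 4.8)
The plan is to make precise the heuristic that $s_a(K)M_a(K)$, with $M_a(K)$ the approximate inverse \eqref{EQN:define-M_a(K)}, is typically comparable to the positive Euler product $\gamma(a)$ of Proposition~\ref{PROP:c_a(n)-series-to-product}, whereas $M_a(K)$ is typically $O(1)$; then the hypothesis $\abs{s_a(K)}\le\eta$ forces, for all but few $a$, that $\gamma(a)$ is at most roughly $\eta\,\abs{M_a(K)}$, which is small only on a sparse set. First I would clear away trivialities: if $\eta$ exceeds a fixed absolute constant then $\eta^{1.8j}\gg_j1$ and the bound holds with any constant $\geq3$, and if $A\geq K^{1.5j/\eps}$ then $A^\eps K^{-1.5j}\geq1$ and again the bound is trivial, so I may assume $\eta$ small and $A\leq K^{C}$ for some $C=C(j,\eps)$. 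Next I would discard the $a\in[-A,A]$ lying outside $\mcal{S}(D)$, where $D\defeq\lceil K^{1/12}\rceil$; there are $\ll A\sum_{s\ \textnormal{square-full},\,s>D}s^{-1}\ll AD^{-1/2}\ll AK^{-1/24}$ of these, which is admissible.

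Put $L\defeq\max(\eta^{1/10},\,A^\eps K^{-1/12+\eps})$ and $\mu\defeq L/\eta$, where the $\eps$ in $L$ (like the auxiliary $\eps$ below) is taken small relative to $j$ and to the $\eps$ of the statement. For $a\in[-A,A]\cap\mcal{S}(D)$ I would use the trivial inclusion
\[
\{\abs{s_a(K)}\le\eta\}\ \subseteq\ \{\abs{M_a(K)}>\mu\}\ \cup\ \{\abs{s_a(K)M_a(K)}\le L\},
\]
and, writing $s_a(K)M_a(K)=\gamma(a)-\sum_{n>K}c_a(n)+\mcal{T}_a$ via \eqref{EQN:s_a(K)M_a(K)-expand} and the absolute convergence $\gamma(a)=\sum_{n\ge1}c_a(n)$ of Proposition~\ref{PROP:c_a(n)-series-to-product} --- here $\mcal{T}_a$ denotes the off-diagonal tail $\sum_{n_1n_2>K,\,n_1,n_2\le K,\,\gcd(n_2,30)=1}n_1^{-3}T_a(n_1)\mu(n_2)n_2^{-3}T_a(n_2)$ from \eqref{EQN:s_a(K)M_a(K)-expand} --- the further inclusion
\[
\{\abs{s_a(K)M_a(K)}\le L\}\ \subseteq\ \{\gamma(a)\le3L\}\ \cup\ \bigl\{\textstyle\abs{\sum_{n>K}c_a(n)}\ge L\bigr\}\ \cup\ \{\abs{\mcal{T}_a}\ge L\}.
\]
It then remains to count the $a\in[-A,A]\cap\mcal{S}(D)$ in each of these four sets, recalling $\eta\mu=L$.

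The set $\bigl\{\abs{\sum_{n>K}c_a(n)}\ge L\bigr\}$ is empty on $\mcal{S}(D)$: by Proposition~\ref{PROP:c_a(n)-series-to-product}(4) (or its proof) $\abs{\sum_{n>K}c_a(n)}\ll_\eps\abs{a}^\eps K^{-1/6+\eps}D\le A^\eps K^{-1/12+\eps}\le L$. For $\{\gamma(a)\le3L\}$: Proposition~\ref{PROP:c_a(n)-series-to-product}(3) gives $\gamma(a)\gg(a/\phi(a))^{-O(1)}$, so the condition forces $a/\phi(a)$ large; since $\sum_{n\le x}(n/\phi(n))^s\ll_s x$ for every fixed $s$, Chebyshev's inequality (with $s$ large, using $3L<1$) yields $\#\{\abs{a}\le A:\gamma(a)\le3L\}\ll_jAL^{18j}$. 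For $\{\abs{M_a(K)}>\mu\}$ and $\{\abs{\mcal{T}_a}\ge L\}$ I would split $M_a(K)$ into $O(\log K)$ and $\mcal{T}_a$ into $O(\log^2K)$ dyadic blocks and bound the $2j$-th moment of each block using Lemma~\ref{LEM:restricted-2jth-moment-bound-for-general-convolution}. For $M_a(K)$, taking the square-full variable to be $1$ and using $\min(K_1,D^{3/2})^jK_1^{-j}\le1$, one gets $\sum_{\abs a\le A,\,a\in\mcal{S}(D)}\abs{M_a(K)}^{2j}\ll_{j,\eps}AK^\eps$, hence $\#\{\abs{M_a(K)}>\mu\}\ll_{j,\eps}\mu^{-2j}AK^\eps$. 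For $\mcal{T}_a$, the constraint $n_1n_2>K$ confines each block to $K_1K_2\gg K$, so $\max\abs\beta\ll K^{-1}$, and after Poisson summation the moduli that survive are square-full of size $\gg K^{2j}$; the $\asymp\sqrt x$ count of square-full integers $\le x$ then gives a genuine saving $\sum_{\abs a\le A,\,a\in\mcal{S}(D)}\abs{\mcal{T}_a}^{2j}\ll_{j,\eps}AD^{O(j)}K^{-j+\eps}\ll_{j,\eps}AK^{-7j/8+\eps}$, hence $\#\{\abs{\mcal{T}_a}\ge L\}\ll_{j,\eps}L^{-2j}AK^{-7j/8+\eps}$. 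Summing the four counts: when $L=\eta^{1/10}$ the $\gamma$- and $M$-counts contribute $\ll A\eta^{1.8j}$ (up to the harmless $K^\eps$) and the $\mcal{T}$-count contributes $\ll A\eta^{-0.2j}K^{-7j/8+\eps}\ll A\eta^{-0.2j}K^{-0.8j}$; when instead $L=A^\eps K^{-1/12+\eps}$ (so $\eta$ is very small) the $\gamma$- and $M$-counts contribute $\ll A\cdot A^\eps K^{-1.5j}$ and the $\mcal{T}$-count $\ll AK^{-1/24}$. Together with the discarded set this gives the claimed bound.

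I expect the main obstacle to be the tail moment $\sum_{\abs a\le A}\abs{\mcal{T}_a}^{2j}$. As $\mcal{T}_a$ is a genuine bilinear form in $T^\natural_a$ with one index forced to be squarefree, Lemma~\ref{LEM:restricted-2jth-moment-bound-for-general-convolution} does not apply verbatim, and one must rerun its proof --- Poisson summation in $a$, the vanishing of complete $T_b$-moments over non-square-full moduli (as in Lemma~\ref{LEM:unbalanced-T_b-moment-vanishing}), and the pointwise estimate of Lemma~\ref{LEM:technical-complete-T_b-moment-bound} --- this time converting the constraint $n_1n_2>K$ into an actual power of $K$, rather than relying on the bare boundedness that suffices for $M_a(K)$. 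A secondary, more routine difficulty is the $\eps$-bookkeeping: the auxiliary $\eps$ in Proposition~\ref{PROP:c_a(n)-series-to-product}(4) and in $L$ must be chosen small enough (given $j$ and the $\eps$ of the statement, and using $A\le K^{C(j,\eps)}$) that $\abs a^\eps K^{-1/6+\eps}D\le L$ genuinely holds and that the $K^\eps$ losses from the dyadic decompositions and from Lemma~\ref{LEM:restricted-2jth-moment-bound-for-general-convolution} stay absorbed.
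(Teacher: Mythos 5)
Your proposal is correct and follows essentially the same strategy as the paper's proof: restrict to $a\in\mcal{S}(D)$ with $D=K^{1/12}$ (paying $\ll AD^{-1/2}\ll AK^{-1/24}$), decompose the bad set by the trichotomy ``$M_a(K)$ large'' / ``$\gamma(a)$ small'' / ``tail $\mcal{T}_a$ large,'' bound the first and third via $2j$th moments using Lemma~\ref{LEM:restricted-2jth-moment-bound-for-general-convolution}, and bound the second via moments of $a/\phi(a)$ (exactly the Montgomery--Vaughan bound \cite{montgomery2007multiplicative}*{(2.32)} the paper cites). The paper folds your ``$\abs{\sum_{n>K}c_a(n)}\ge L$ is empty'' observation directly into the definition of its $\mscr{E}_2$ (adding the $A^\eps K^{\eps-1/6}D$ term inside the threshold), and takes a fixed threshold $\eta^{1/10}$ rather than your $L=\max(\eta^{1/10},A^\eps K^{-1/12+\eps})$; both reorganizations are cosmetic.

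One correction to your own worry: you say ``Lemma~\ref{LEM:restricted-2jth-moment-bound-for-general-convolution} does not apply verbatim'' to $\mcal{T}_a$ and that one must rerun its proof. In fact it does apply verbatim, and this is how the paper bounds $\mscr{E}_3$. In \eqref{EQN:s_a(K)M_a(K)-expand} the second index $n_2$ carries a $\mu(n_2)$, so it is automatically squarefree, i.e.\ $n_2\in\mcal{S}(1)$ as the lemma requires; the index $m=n_1$ is unrestricted, also as the lemma allows. After dyadic decomposition, the constraint $n_1n_2>K$ forces $K_1K_2\gg K$, and $\max\abs{\beta}\ll(K_1K_2)^{-1}$, so the lemma's bound $A\cdot\min(K_1,D^{3/2})^j(K_1K_2)^{j+\eps}(\max\abs{\beta})^{2j}$ already yields $\ll_{j,\eps}A\,D^{3j/2}K^{-j+\eps}$ without any modification to the lemma's proof. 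The hypothesis $A\ge(K_1K_2)^{3j}$ is covered by $A\ge K^{6j}$. Apart from this over-caution, your argument matches the paper's.
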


\begin{proof}
By Proposition~\ref{PROP:c_a(n)-series-to-product},
there exists a constant $C=C(\eps)>0$ such that every element of $\mscr{E}(K;\eta)\cap [-A,A]\cap \mcal{S}(D)$ lies in one of the following sets:
\begin{enumerate}
    \item $\mscr{E}_1\defeq \set{a\in \mcal{S}(D): \abs{M_a(K)}\geq \eta^{-9/10}}$.
    
    \item $\mscr{E}_2\defeq \set{a\in \mcal{S}(D): \prod_{p\mid a}(1-p^{-1})^C \leq C\cdot (\eta^{1/10} + A^\eps K^{\eps-1/6}D)}$.
    
    \item $\mscr{E}_3\defeq \set{a\in \mcal{S}(D): \abs{s_a(K)M_a(K)-\sum_{n\leq K}c_a(n)}\geq \eta^{1/10}}$.
\end{enumerate}
Indeed, if $C$ is sufficiently large and $a\in \mscr{E}(K;\eta)\cap [-A,A]\cap \mcal{S}(D)\setminus (\mscr{E}_1\cup \mscr{E}_2)$,
then $\abs{s_a(K)}\leq \eta$ (since $a\in \mscr{E}(K;\eta)$)
and $\abs{M_a(K)}\leq \eta^{-9/10}$ (since $n\notin \mscr{E}_1$),
and $\sum_{n\leq K} c_a(n) \geq 2\eta^{1/10}$ (by Proposition~\ref{PROP:c_a(n)-series-to-product}, since $n\notin \mscr{E}_2$),
so $a\in \mscr{E}_3$.

We now bound $\mscr{E}_1$, $\mscr{E}_2$, $\mscr{E}_3$.
By \eqref{EQN:define-M_a(K)} and Lemma~\ref{LEM:restricted-2jth-moment-bound-for-general-convolution} (with $K_1=1$ and $1\leq K_2\leq K$),
\begin{equation}
\eta^{-1.8j}\cdot \card{\mscr{E}_1\cap [-A,A]}
\le \sum_{a\in [-A,A]\cap \mcal{S}(D)} \abs{M_a(K)}^{2j}
\ll_j A,
\end{equation}
provided $A\geq K^{3j}$.
By \eqref{EQN:s_a(K)M_a(K)-expand} and Lemma~\ref{LEM:restricted-2jth-moment-bound-for-general-convolution} (with $1\leq K_2,K_2\leq K$ and $(2K_1)(2K_2)\geq K$),
\begin{equation}
\eta^{0.2j}\cdot \card{\mscr{E}_3\cap [-A,A]}
\le \sum_{a\in [-A,A]\cap \mcal{S}(D)} \Bigl\lvert{s_a(K)M_a(K)-\sum_{n\leq K}c_a(n)}\Bigr\rvert^{2j}
\ll_{j,\eps} A D^{3j/2} K^{-j+\eps},
\end{equation}
provided $A\geq K^{6j}$.
And $\prod_{p\mid a}(1-p^{-1}) = \phi(\abs{a})/\abs{a}$, so
\begin{equation}
    C^{-18j} (\eta^{1/10} + A^\eps K^{\eps-1/6}D)^{-18j}
    \cdot \card{\mscr{E}_2\cap [-A,A]}
    \le \sum_{a\in [-A,A]\setminus \set{0}} \left(\frac{\abs{a}}{\phi(\abs{a})}\right)^{18Cj}
    \ll_{Cj} A
\end{equation}
by \cite{montgomery2007multiplicative}*{p.~61, (2.32)}.
Since $[-A,A]\setminus \mcal{S}(D)$ has size $\ll D^{-1/2}A$, we conclude that
\begin{equation*}
\frac{\card{\mscr{E}(K;\eta) \cap [-A,A]}}{A} \ll_{j,\eps}
\eta^{1.8j} + (A^\eps K^{\eps-1/6}D)^{18j} + \eta^{-0.2j}D^{3j/2}K^{-j+\eps} + D^{-1/2},
\end{equation*}
provided $A\geq K^{6j}$.
Taking $D=\floor{K^{1/12}}$ gives Theorem~\ref{THM:s_a(K)-is-typically-sizable}.
\end{proof}

\section{Applying increasingly cuspidal weights}
\label{SEC:apply-variance-estimates}

To prove Theorems~\ref{THM:main-result-on-integer-values} and~\ref{THM:main-result-on-prime-values}, we will combine Theorems~\ref{THM:unconditional-variance-evalulation-over-integers}, \ref{THM:conditional-variance-evalulation-over-primes}, and~\ref{THM:s_a(K)-is-typically-sizable}.
To apply Theorems~\ref{THM:unconditional-variance-evalulation-over-integers} and~\ref{THM:conditional-variance-evalulation-over-primes}, we need to choose a suitable weight $\nu$.
Fix a function $w_0\in C^\infty_c(\RR)$ with $w_0\geq 0$ everywhere, and $w_0\geq 1$ on $[-2,2]$.
Fix a function $w_2\in C^\infty_c(\RR)$ with $w_2\geq 0$ everywhere, $w_2\geq 1$ on $[1,10]$, and $\Supp{w_2}\belongs \RR_{>0}$.
Given a real $R\geq 2$, set
\begin{equation}
\label{EQN:define-key-weight-nu^star}
\nu^\star(\bm{y})
\defeq w_0(F_0(\bm{y}))
\int_{r\in [1,R]} d^\times{r}
\prod_{1\leq l\leq 3} w_2(\abs{y_l}/r)
\prod_{1\leq i<j\leq 3} w_2(\abs{y_i+y_j}/r),
\end{equation}
where $d^\times{r}\defeq dr/r$.
Clearly $\nu^\star\in C^\infty_c(\RR^3)$, and $\nu^\star$ satisfies \eqref{EQN:condition-for-very-clean} and \eqref{EQN:condition-for-symmetric}.

Let us consider what happens as we vary $R$.
It is clear from \eqref{EQN:define-key-weight-nu^star} that
\begin{equation}
\label{INEQ:bound-diameter-of-tentacled-nu}
\Supp{\nu^\star} \belongs \set{\bm{y}\in \RR^3: 1\ll \abs{y_1},\abs{y_2},\abs{y_3}\ll R};
\end{equation}
in particular, $B(\nu^\star) \ll R$.
On the other hand (reflecting the tentacled nature of $\Supp{\nu^\star}$),
\begin{equation}
\label{INEQ:bound-volume-of-tentacled-nu}
    \vol(\Supp\nu^\star)
    \ll \int_{r\in [1,R]} d^\times{r} \int_{y_1,y_2\in \RR} dy_1\,dy_2\,
    \bm{1}_{\abs{y_1},\abs{y_2}\in r\cdot \Supp{w_2}} \cdot r^{-2}
    \ll \log{R},
\end{equation}
because for any $r\in [1,R]$ and $y_1,y_2\in \pm r\cdot \Supp{w_2}$, the set $\set{y_3\in \pm r\cdot \Supp{w_2}: F_0(\bm{y})\in \Supp{w_0}}$ has measure $\ll r^{-2}$.
We also have the following key lemma.

\begin{lemma}
\label{LEM:log-growth-of-real-densities-for-constructed-nu^star}
Let $a\in \RR$ with $\abs{a}\leq X^3$.
Then $\sigma_{\infty,a,\nu^\star}(X)\gg \log{R}$.
\end{lemma}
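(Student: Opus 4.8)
The plan is to exploit the dilation integral $\int_{r\in[1,R]}d^\times r$ appearing in \eqref{EQN:define-key-weight-nu^star}: each individual value of $r$ should contribute an amount $\gg 1$ to $\sigma_{\infty,a,\nu^\star}(X)$, \emph{uniformly} in $r$, so that integrating over $r\in[1,R]$ produces the factor $\log R$. Concretely, first pass to $X=1$ using \eqref{EQN:rescale-real-density-integral}: with $\tilde a\defeq a/X^3$ we have $\abs{\tilde a}\le 1$ and $\sigma_{\infty,a,\nu^\star}(X)=\sigma_{\infty,\tilde a,\nu^\star}(1)$. Set $\rho(\bm u)\defeq \prod_{1\le l\le 3}w_2(\abs{u_l})\prod_{1\le i<j\le 3}w_2(\abs{u_i+u_j})$, a fixed function in $C^\infty_c(\RR^3)$ whose support avoids the coordinate hyperplanes and the hyperplanes $u_i+u_j=0$, and put $\rho_r(\bm y)\defeq \rho(\bm y/r)$, so that $\nu^\star(\bm y)=w_0(F_0(\bm y))\int_{[1,R]}d^\times r\,\rho_r(\bm y)$. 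On the surface $F_0(\bm y)=\tilde a$ we have $w_0(F_0(\bm y))=w_0(\tilde a)\ge 1$ since $\abs{\tilde a}\le 1\le 2$. Writing $\sigma_{\infty,\tilde a,\nu^\star}(1)$ as a surface integral via \eqref{EQN:surface-integral-representation-of-real-density} (legitimate since $\Supp\nu^\star$ is compact and avoids $\{y_1=0\}$ by \eqref{EQN:condition-for-very-clean}) and swapping the $d^\times r$ integral with the surface integral by Tonelli, we get
\[
\sigma_{\infty,a,\nu^\star}(X)\ \ge\ \int_{[1,R]}d^\times r\,\sigma_{\infty,\tilde a,\rho_r}(1).
\]
A change of variables $\bm y=r\bm u$ (using $F_0(r\bm u)=r^3F_0(\bm u)$) identifies $\sigma_{\infty,\tilde a,\rho_r}(1)=\sigma_{\infty,\tilde a/r^3,\rho}(1)$, and $\abs{\tilde a/r^3}\le 1$ for all $r\ge 1$. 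Hence it suffices to prove $\sigma_{\infty,c,\rho}(1)\gg 1$ uniformly over $\abs{c}\le 1$, with the implied constant depending only on the fixed $w_0,w_2$; the lemma then follows at once upon integrating $d^\times r$ over $[1,R]$.

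For the uniform positivity, for each $c\in[-1,1]$ set $\bm v_c\defeq ((c+128)^{1/3},-4,-4)$. Then $F_0(\bm v_c)=c$, while the six linear forms $y_l$, $y_i+y_j$ evaluated at $\bm v_c$ equal $(c+128)^{1/3}\in(5,5.1)$, $4$, $4$, $(c+128)^{1/3}-4\in(1,1.1)$, $(c+128)^{1/3}-4$, and $8$, all of which lie in $[1,10]$, so $\rho(\bm v_c)\ge 1$. As $c$ ranges over the compact set $[-1,1]$ the point $\bm v_c$ traces a compact arc contained in the open set $\{\rho>1/2\}$, so there is $\delta_0>0$ (depending only on $w_2$) with the ball of radius $\delta_0$ about $\bm v_c$ contained in $\{\rho>1/2\}$ for every such $c$; on the (bounded, origin-avoiding) union of these balls the gradient $\nabla F_0=(3y_1^2,3y_2^2,3y_3^2)$ is bounded above and bounded away from $\bm 0$, so $F_0$ is uniformly a submersion there. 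Representing $\sigma_{\infty,c,\rho}(1)$ as $\int_{F_0=c}\rho\,\abs{\nabla F_0}^{-1}\,dS$ (a partition-of-unity version of \eqref{EQN:surface-integral-representation-of-real-density}) and restricting to the ball about $\bm v_c$ — over which $\rho>1/2$ and $\abs{\nabla F_0}\ll 1$, while the level set $\{F_0=c\}$ passes through the centre $\bm v_c$ and is a bounded-gradient graph over a disc of radius $\gg\delta_0$ in its tangent plane — yields $\sigma_{\infty,c,\rho}(1)\gg\delta_0^2\gg 1$, uniformly in $\abs{c}\le 1$.

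The only substantive step is the construction of the base points $\bm v_c$: one must verify that the affine slice $F_0=c$ genuinely meets the ``core'' of $\Supp\rho$ (all six of $y_l$ and $y_i+y_j$ lying in $[1,10]$) for \emph{every} $c\in[-1,1]$. This is where the coefficients $-4,-4$ and the constant $128$ enter, via the identity $2\cdot 4^3=128$, and it is a short exercise in the arithmetic of the six linear forms; once it is in hand, the passage to a uniform lower bound is routine compactness plus submersivity of $F_0$ away from the origin, and the remaining manipulations (the reduction to $X=1$, the Tonelli swap, and the rescaling $r\mapsto r^3$) are mechanical.
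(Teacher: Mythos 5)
Your proof is correct, and it goes through the same surface‑integral formula \eqref{EQN:surface-integral-representation-of-real-density} and the same ``each scale $r\in[1,R]$ contributes $\gg 1$'' strategy as the paper, so it is essentially the same approach. The one organizational difference is how the per‑scale lower bound is produced. The paper works directly in the $(y_2,y_3)$ chart: it fixes the explicit two‑parameter box $D_r=[3.99r,4r]^2$, checks that on $D_r$ all six linear forms $y_l$, $y_i+y_j$ have magnitude in $[1,10]r$ and that $y_1\in[-6r,-5r]$, so the integrand is $\gg r^{-2}$ on a set of measure $\gg r^2$, and then integrates $d^\times r$. You instead isolate a cleaner subclaim — $\sigma_{\infty,c,\rho}(1)\gg 1$ uniformly over $\lvert c\rvert\le 1$ for the fixed profile $\rho$ — by exhibiting a one‑parameter family of base points $\bm v_c=((c+128)^{1/3},-4,-4)$ and invoking compactness plus submersivity of $F_0$ to get a uniform $\delta_0$‑neighbourhood on which the surface integral is $\gg\delta_0^2$. (Your $\bm v_c$ is, up to sign‑permutation, the centre of the paper's $D_r$ after rescaling.) Your extraction of the self‑similar structure — $\sigma_{\infty,\tilde a,\rho_r}(1)=\sigma_{\infty,\tilde a/r^3,\rho}(1)$, followed by a Tonelli swap with $\int_{[1,R]}d^\times r$ — is slightly more modular and makes the mechanism transparent, at the cost of a little extra machinery (compactness argument, graph‑over‑tangent‑plane estimate) that the paper's hands‑on computation avoids. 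Both are fully rigorous.

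One small remark: the partition‑of‑unity hedge in your step for $\sigma_{\infty,c,\rho}(1)$ is unnecessary, since $\Supp\rho$ already has $\lvert u_1\rvert$ bounded away from zero (as $\Supp w_2\subset\RR_{>0}$), so \eqref{EQN:surface-integral-representation-of-real-density} applies to $\rho$ directly with $u_1$ as the distinguished coordinate.
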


\begin{proof}
Let $\tilde{a}\defeq a/X^3$.
Given $(y_2,y_3)$, let $y_1\defeq (\tilde{a}-y_2^3-y_3^3)^{1/3}$.
For $r\geq 1$, let $D_r\defeq [3.99r,4r]^2$.
Then for all $(y_2,y_3)\in D_r$, we have $F_0(\bm{y})=\tilde{a}\in [-1,1]$ and $y_1\in [-6r,-5r]$, and thus
\begin{equation*}
w_0(F_0(\bm{y})) \prod_{1\leq l\leq 3} w_2(\abs{y_l}/r) \prod_{1\leq i<j\leq 3} w_2(\abs{y_i+y_j}/r)
\geq 1.
\end{equation*}
By \eqref{EQN:surface-integral-representation-of-real-density}, \eqref{EQN:define-key-weight-nu^star}, and the nonnegativity of $w_0$, $w_2$, it follows that
\begin{equation*}
    \sigma_{\infty,a,\nu^\star}(X)
    = \int_{\RR^2} dy_2\,dy_3\,\nu^\star(\bm{y})\cdot (3y_1^2)^{-1}
    \gg \int_{r\in [1,R]} d^\times{r}\,\vol(D_r)\cdot r^{-2}
    \gg \log{R},
\end{equation*}
since $\vol(D_r)\gg r^2$.
\end{proof}

We also need some control on the norms \eqref{EQN:define-Sobolev-norm} of $\nu^\star$.
Since $w_0$, $w_2$ are fixed, we have
\begin{equation}
\label{INEQ:bound-max-modulus-of-tentacled-nu}
\nu^\star(\bm{y})
\ll \int_{\RR_{>0}} d^\times{r}\,w_2(\abs{y_1}/r)
= \bm{1}_{y_1\ne 0} \cdot \int_{\RR_{>0}} d^\times{r}\,w_2(1/r)
\ll 1,
\end{equation}
uniformly over $\bm{y}\in \RR^3$ and $R\geq 2$.
In general, for integers $k\geq 0$, we have\footnote{In \cite{wang2022thesis}*{Remark~2.2.15} the derivatives of $\nu$ are incorrectly stated to be $O_k(1)$.
This mistake does not affect the proof of \cite{wang2022thesis}*{Theorem~2.1.8}.
In any case, our present work is independent of \cite{wang2022thesis}.}
\begin{equation}
\label{INEQ:bound-derivatives-of-tentacled-nu}
\norm{\nu^\star}_{k,\infty} \ll_k B(\nu^\star)^{2k} \ll_k R^{2k};
\end{equation}
to see why, note that by the chain rule, any $y_i$-derivative of $w_0$ in \eqref{EQN:define-key-weight-nu^star} introduces a factor of $3y_i^2\ll B(\nu^\star)^2$, while any $y_i$-derivative of $w_2$ in \eqref{EQN:define-key-weight-nu^star} only introduces a factor of $1/r \ll 1$.

We are finally prepared to prove our main theorems, by adapting Chebyshev's inequality to approximate variances like \eqref{EQN:define-approximate-variance} (after removing $a$'s for which $s_a(K)$ is small).

\begin{proof}
[Proof of Theorems~\ref{THM:main-result-on-integer-values} and~\ref{THM:main-result-on-prime-values}]
We gradually increase our hypotheses.
First assume that \eqref{EQN:soft-HLH-general-homogeneous-weight} for $d=1$ holds for all clean functions $w\in C^\infty_c(\RR^6)$.
Plugging this and \eqref{EQN:linear-space-inclusion-exclusion}, \eqref{EQN:special-solution-Poisson-summation} into Theorem~\ref{THM:unconditional-variance-evalulation-over-integers} (with $d=1$), we get that for integers $X,K\geq 1$ with $K\leq X^{9/10}$, we have
\begin{equation}
\frac{\map{Var}(X,K;1)}{X^3}
\ll \norm{\nu^\star}_{L^2(\RR^3)}^2 + o_{\nu^\star;X\to\infty}(1)
+ K^{-1/2} \norm{\nu^\star}_{100,\infty}^2 B(\nu^\star)^{5000},
\end{equation}
where $o_{\nu^\star;X\to\infty}(1)$ denotes a quantity
that tends to $0$ as $X\to\infty$ (for any fixed $\nu^\star$).
Here $\norm{\nu^\star}_{L^2(\RR^3)}^2\ll \log{R}$ by \eqref{INEQ:bound-volume-of-tentacled-nu}--\eqref{INEQ:bound-max-modulus-of-tentacled-nu}, and $\norm{\nu^\star}_{100,\infty}^2 B(\nu^\star)^{5000}\ll R^{5400}$ by \eqref{INEQ:bound-derivatives-of-tentacled-nu}.
On the other hand, by Theorem~\ref{THM:s_a(K)-is-typically-sizable} with $\eta = (\log{R})^{-10/j}$ (for an integer $j\geq 1$), we have
\begin{equation}
    \frac{\card{\mscr{E}(K;\eta)\cap [-A,A]}}{A}
    \ll_{j,\eps} (\log{R})^{-18} + A^\eps K^{-1.5j} + (\log{R})^{2} K^{-0.8j} + K^{-1/24}
\end{equation}
for integers $A\geq K^{6j}$.
By \eqref{EQN:define-approximate-variance}, \eqref{EQN:define-admissible-integers-with-eta-bounded-s_a(K)}, and Lemma~\ref{LEM:log-growth-of-real-densities-for-constructed-nu^star},
we conclude (by letting $A,X\to \infty$ with $A\in [X^3/2, X^3]$, taking $K=\floor{A^{1/6j}}$, taking $\eps=1/6$, and taking $R\leq K^{1/20000}$) that
\begin{equation}
\begin{split}
\label{INEQ:endgame-splitting-template}
\frac{\card{\mcal{E}\cap [-A,A]}}{A}
&\ll \frac{\card{\mscr{E}(K;\eta)\cap [-A,A]}}{A}
+ \frac{\map{Var}(X,K;1)/X^3}{\eta^2 (\log{R})^2} \\
&\ll_j (\log{R})^{-18}
+ \frac{(\log{R}) + o_{R;A\to\infty}(1)}{(\log{R})^{2-20/j}}
\end{split}
\end{equation}
as $A\to\infty$.
Taking $j=40$ and $R\to\infty$ proves the first part of Theorem~\ref{THM:main-result-on-integer-values}.

What remains is similar.
Fix $(\delta, k)\in \RR_{>0}\times \ZZ_{\geq 1}$, and assume \eqref{EQN:hard-HLH-clean-weight-level-d} for $\xi=0$.
Note that \eqref{EQN:hard-HLH-clean-weight-level-d} remains true if we decrease $\delta$ or increase $k$; so we may assume $\delta\in (0,9/10)$ and $k\ge 5000$.
In fact, it will be convenient to assume $\delta=1/(12j)$ where $j\ge 1$ is a large integer.
Theorem~\ref{THM:conditional-variance-evalulation-over-primes} for $\xi=0$ now implies (assuming $X,K\ge 2$ and $K\le X^{9/10-\delta}$)
\begin{equation}
\frac{\map{Var}(X,K;1)}{X^3}
\ll_j \norm{\nu^\star}_{L^2(\RR^3)}^2
+ \frac{\norm{\nu^\star}_{k,\infty}^2 B(\nu^\star)^k}{\min(X^{\delta/11}, K^{2/3}X^{-3\delta})}.
\end{equation}
Let $A, X \to \infty$ with $A \in [X^3/2, X^3]$;
let $(K, \eps, R, \eta) = (\floor{A^{2\delta}}, 1/6, X^{\delta/300k}, (\log{R})^{-10/j})$.
Then $K = \floor{A^{1/6j}}$, so $A\geq K^{6j}$.
Applying \eqref{INEQ:bound-volume-of-tentacled-nu}--\eqref{INEQ:bound-derivatives-of-tentacled-nu}, Theorem~\ref{THM:s_a(K)-is-typically-sizable}, and Lemma~\ref{LEM:log-growth-of-real-densities-for-constructed-nu^star} as before,
we get by the first line of \eqref{INEQ:endgame-splitting-template} that
\begin{equation}
    \frac{\card{\mcal{E}\cap [-A,A]}}{A}
    \ll_j (\log{R})^{-18}
    + \frac{(\log{R}) + R^{5k} X^{-\delta/11}}{(\log{R})^{2-20/j}}
    \ll_j \frac{1}{(\log{R})^{1-20/j}},
\end{equation}
since $R^{5k} = X^{\delta/60}$.
Taking $j\to \infty$ proves the second part of Theorem~\ref{THM:main-result-on-integer-values}.

Finally, fix $(\delta, k)\in \RR_{>0}\times \ZZ_{\geq 1}$, and assume \eqref{EQN:hard-HLH-clean-weight-level-d} for $\xi=1$.
As in the previous paragraph, we assume $\delta=1/(12j)$ and $k\ge 5000$, where $j\in \ZZ_{\ge 1}$,
and let $(K, \eps, R, \eta) = (\floor{A^{2\delta}}, 1/6, X^{\delta/300k}, (\log{R})^{-10/j})$, where $A \in [X^3/2, X^3]$ and $A, X \to \infty$.
Theorem~\ref{THM:conditional-variance-evalulation-over-primes} for $\xi=1$,
when combined with \eqref{INEQ:bound-volume-of-tentacled-nu}--\eqref{INEQ:bound-derivatives-of-tentacled-nu}, Theorem~\ref{THM:s_a(K)-is-typically-sizable}, and Lemma~\ref{LEM:log-growth-of-real-densities-for-constructed-nu^star} as before,
then gives
\begin{equation}
    \frac{\card{\mcal{E}\cap \set{p\le A}}}{A}
    \ll_j (\log{R})^{-18}
    + \frac{(\log{R}/\log{X}) + R^{5k} X^{-\delta/11}}{(\log{R})^{2-20/j}}
    \ll_j \frac{1}{(\log{R})^{2-20/j}}.
\end{equation}
Taking $j\to \infty$ proves Theorem~\ref{THM:main-result-on-prime-values}.
\end{proof}


\section{Nonnegative cubes}
\label{SEC:nonnegative-cubes}

Let $A\ge 2$.
By the Selberg sieve, $\sum_{p\leq A} r_3(p) \ll A/\log{A}$.
Assuming something like \eqref{EQN:hard-HLH-clean-weight-level-d} (ideally for arbitrarily small $\delta>0$), can one prove $\sum_{p\leq A} r_3(p) \gg A/\log{A}$?
Something like \eqref{EQN:hard-HLH-clean-weight-level-d} might let one handle certain ``Type~II'' sums.
The main difficulty might instead lie in ``Type~I$_j$'' estimates (roughly corresponding to counting solutions to $dn_1n_2\cdots n_j = x^3+y^3+z^3$ for $j$ large, where $d$ is fixed).
One may be able to handle ``Type~I$_j$'' sums for $j\leq 2$ using the methods of \cite{hooley1981waring}, but it would be nice to treat larger $j$, even conditionally.

Or, assuming precise asymptotic second moments for $r_3(a)$ over $\set{a\leq A: a\equiv 0\bmod{d}}$ for $d\leq A^\delta$,
can one show that $\sum_{p\leq A} r_3(p)^2 \ll A/\log{A}$?
Note the lack of exact multiplicative structure in $d$ in the expected main term over $\set{a\leq A: a\equiv 0\bmod{d}}$.
(For $d=1$, see \cite{hooley1986some}*{Conjecture~2}.)
This may or may not be a serious obstacle.

Finally, in the conditional sense above, can one show that a positive proportion of primes $p$ have $r_3(p)\neq 0$ (i.e.~are sums of three nonnegative cubes)?
Another direction, suggested by discussion with Christian Bernert and Damaris Schindler, would be to find a sequence of arithmetic progressions $P_i$ along which $\set{a\in P_i: r_3(a)\ne 0}$ has relative density $\to 1$.

\section*{Acknowledgements}

I thank Valeriya Kovaleva, Sarah Peluse, and Peter Sarnak for inspiring me to extend my thesis work on $x^3+y^3+z^3$ from integer values to prime values.
I thank Manjul Bhargava, Tim Browning, Valeriya Kovaleva, Peter Sarnak, Katy Woo, Shuntaro Yamagishi, and Liyang Yang for conversations on closely related topics.
I thank Simona Diaconu for sharing an early draft of her enlightening senior thesis.
I thank the editors and referees for providing helpful feedback, corrections, and suggestions.
This work was partially supported by the European Union's Horizon~2020 research and innovation program under the Marie Sk\l{}odowska-Curie Grant Agreement No.~101034413.

\bibliographystyle{amsxport}
\bibliography{master.bib}

\end{document}